 \newtheorem{Theorem}{Theorem}[section]
 \newtheorem{Corollary}[Theorem]{Corollary}
 \newtheorem{Lemma}[Theorem]{Lemma}
 \newtheorem{Proposition}[Theorem]{Proposition}
 \newtheorem{Definition}[Theorem]{Definition}
 \newtheorem{Problem}[Theorem]{Problem}
 \newtheorem{Conjecture}[Theorem]{Conjecture}
 \newtheorem{Remark}[Theorem]{Remark}
 \numberwithin{equation}{section}
\begin{document}

\title[Concavity property of minimal $L^2$ integrals \uppercase\expandafter{\romannumeral3}]
{Concavity property of minimal $L^2$ integrals with Lebesgue measurable gain \uppercase\expandafter{\romannumeral3}: open Riemann surfaces}

\author{Qi'an Guan}
\address{Qi'an Guan: School of
Mathematical Sciences, Peking University, Beijing 100871, China.}
\email{guanqian@math.pku.edu.cn}

\author{Zheng Yuan}
\address{Zheng Yuan: State Key Laboratory of Mathematical Sciences, Academy of Mathematics and Systems Science, Chinese Academy of Sciences, Beijing 100190, China.}
\email{yuanzheng@amss.ac.cn}

\thanks{}

\subjclass[2020]{32D15, 30F30, 32L10, 32U05, 32W05}

\keywords{optimal $L^{2}$ extension, concavity, Suita conjecture, minimal $L^2$ integral, multiplier ideal sheaf, plurisubharmonic function}

\date{\today}

\dedicatory{}

\commby{}


\begin{abstract}
In this article, we present a characterization of the concavity property of minimal $L^2$ integrals degenerating to linearity in the case of finite points on open Riemann surfaces.
As an application, we give a characterization of the holding of equality in optimal jets $L^2$ extension problem from analytic subsets to open Riemann surfaces, which is a weighted jets version of Suita conjecture for analytic subsets.
\end{abstract}

\maketitle

\tableofcontents

\section{Introduction}\label{introduction}

The strong openness property of multiplier ideal sheaves (i.e. $\mathcal{I}(\varphi)=\mathcal{I}_+(\varphi):=\mathop{\cup} \limits_{\epsilon>0}\mathcal{I}((1+\epsilon)\varphi)$) is an important feature of multiplier ideal sheaves
 and used in the study of several complex variables, complex algebraic geometry and complex differential geometry
(see e.g. \cite{GZSOC,K16,cao17,cdM17,FoW18,DEL18,ZZ2018,GZ20,berndtsson20,ZZ2019,ZhouZhu20siu's,FoW20,KS20,DEL21}),
which was conjectured by Demailly \cite{DemaillySoc} and proved by Guan-Zhou \cite{GZSOC} (the 2-dimensional case was proved by Jonsson-Musta\c{t}$\breve{a}$
\cite{JonssonMustata}),
where $\varphi$ is a plurisubharmonic function of a complex manifold $M$ (see \cite{Demaillybook}), and multiplier ideal sheaf $\mathcal{I}(\varphi)$ is the sheaf of germs of holomorphic functions $f$ such that $|f|^2e^{-\varphi}$ is locally integrable (see e.g. \cite{Tian,Nadel,Siu96,DEL,DK01,DemaillySoc,DP03,Lazarsfeld,Siu05,Siu09,DemaillyAG,Guenancia}).

When $\mathcal{I}(\varphi)=\mathcal{O}$, the strong openness property degenerates to the openness property, which was a conjectured by Demailly-Koll\'ar \cite{DK01}
and proved by Berndtsson \cite{Berndtsson2} (the 2-dimensional case was proved by Favre-Jonsson in \cite{FavreJonsson}).
Recall that Berndtsson \cite{Berndtsson2} established an effectiveness result of the openness property.
Stimulated by Berndtsson's effectiveness result, and continuing the solution of the strong openness property \cite{GZSOC},
Guan-Zhou \cite{GZeff} established an effectiveness result of the strong openness property by considering the minimal $L^{2}$ integrals  on the pseudoconvex domain $D$ related to the multiplier ideal sheaves.

Considering the minimal $L^{2}$ integrals on all sublevels of the weight $\varphi$,
Guan \cite{G16} established a concavity property of the minimal $L^2$ integrals, and used the concavity property to obtain a sharp version of Guan-Zhou's effectiveness result.

\subsection{Concavity property of minimal $L^2$ integrals and optimal $L^2$ extension}
\

Let $D\subset\mathbb{C}^n$ a pseudoconvex domain. Denote the set of all plurisubharmonic functions by $PSH(D)$, and denote $PSH^-(D):=\{\varphi\in PSH(D):\varphi<0\}$. Let $\varphi\in PSH^-(D)$. Let $f$ be a holomorphic function near $z_0\in D.$ For any $t\ge0$, the \textbf{minimal $L^2$ integrals} (see \cite{G16,GZeff}) is defined by  
$$G(t):=\inf\bigg\{\int_{\{\varphi<-t\}}|\tilde f|^2:\tilde f\in\mathcal{O}(\{\varphi<-t\})\,\&\,(\tilde f-f,z_0)\in\mathcal{I}(\varphi)_{z_0}\bigg\}.$$
In \cite{G16}, Guan proved the following concavity property.
\begin{Theorem}
	[\cite{G16}]\label{thm:concavity-guan}
	$G(-\log r)$ is a concave function on $(0,1)$.
\end{Theorem}

As applications of Theorem \ref{thm:concavity-guan}, Guan gave a proof of Saitoh's conjecture for conjugate Hardy $H^2$ kernels \cite{Guan2019},
and presented a sufficient and necessary condition of the existence of decreasing equisingular approximations with analytic singularities for the multiplier ideal sheaves with weights $\log(|z_{1}|^{a_{1}}+\cdots+|z_{n}|^{a_{n}})$ \cite{Guan2020}.

In \cite{G2018} (see also \cite{GM}), Guan gave the concavity property for smooth gain on Stein manifolds.
In \cite{GM_Sci}, Guan-Mi obtained the concavity property for smooth gain on weakly pseudoconvex K\"{a}hler manifolds,
which proved a sharp version of Guan-Zhou's effectiveness result on weakly pseudoconvex K\"{a}hler manifolds.
As applications of the concavity property in \cite{G2018},
Guan-Yuan presented an optimal support function related to the strong openness property \cite{GY-support} and an effectiveness result of the strong openness property in $L^p$ \cite{GY-lp-effe}.
In \cite{GY-concavity}, Guan-Yuan obtained the concavity property with Lebesgue measurable gain on Stein manifolds,
as an application, we presented a twisted version of the strong openness property in $L^p$ \cite{GY-twisted} which gave an affirmative answer to a question posed by Chen in \cite{chen18}.

Recently, Guan-Mi-Yuan \cite{GMY-concavity2} obtained the concavity property with Lebesgue measurable gain on weakly pseudoconvex K\"{a}hler manifolds.

Let $M$ be an $n$-dimensional complex manifold,  $X$ be a closed subset of $M$, and  $Z$ be an analytic subset of $M$. Assume that:

$(1)$  $M\backslash (X\cup Z)$ is a weakly pseudoconvex K\"ahler manifold;

$(2)$ $X$ is locally negligible with respect to $L^2$ holomorphic functions, i.e., for any open subset $U\subset M$ and for any $L^2$ holomorphic function $f$ on $U\backslash X$, there exists an $L^2$ holomorphic function $\tilde{f}$ on $U$ such that $\tilde{f}|_{U\backslash X}=f$ with the same $L^2$ norm.

Let $\psi$ and $\varphi+\psi$ be  plurisubharmonic functions on $M$. Denote $T=-\sup\limits_M \psi$.
\begin{Definition}
	We say that a positive measurable function $c$ (so-called ``gain") on $(T,+\infty)$ in class $\mathcal{P}_{T,M}$ if the following two statements hold:
	\par
	$(1)$ $c(t)e^{-t}$ is decreasing with respect to $t$;
	\par
	$(2)$ there is a closed subset $E$ of $M$ such that $E\subset Z\cap \{\psi(z)=-\infty\}$ and for any compact subset $K\subset M\backslash E$, $e^{-\varphi}c(-\psi)$ has a positive lower bound on $K$.
\end{Definition}

Let $Z_0$ be a subset of $\{\psi=-\infty\}$ such that $Z_0 \cap
Supp(\mathcal{O}/\mathcal{I}(\varphi+\psi))\neq \emptyset$. Let $U \supset Z_0$ be
an open subset of $M$, and let $f$ be a holomorphic $(n,0)$ form on $U$. Let $\mathcal{F}_{z_0} \supset \mathcal{I}(\varphi+\psi)_{z_0}$ be an ideal of $\mathcal{O}_{z_0}$ for any $z_{0}\in Z_0$.

For any $t\ge T$, denote the \textbf{minimal $L^2$ integrals}
\begin{equation}\nonumber
	\begin{split}
		\inf\bigg\{\int_{ \{ \psi<-t\}}|\tilde{f}|^2e^{-\varphi}c(-\psi):& \tilde{f}\in
		H^0(\{\psi<-t\},\mathcal{O} (K_M)  ) \\
		&\&\, (\tilde{f}-f)\in
		H^0(Z_0 ,(\mathcal{O} (K_M) \otimes \mathcal{F})|_{Z_0})\bigg\}
	\end{split}
\end{equation}
by $G(t)$, where $K_M$ is the canonical holomorphic line bundle on $M$, $c$ is a nonnegative function on $(T,+\infty)$, $|f|^2:=\sqrt{-1}^{n^2}f\wedge \bar{f}$ and $(\tilde{f}-f)\in
H^0(Z_0 ,(\mathcal{O} (K_M) \otimes \mathcal{F})|_{Z_0} )$ means $(\tilde{f}-f,z_0)\in(\mathcal{O}(K_M)\otimes \mathcal{F})_{z_0}$ for all $z_0\in Z_0$.

Assume $c\in \mathcal{P}_{T,M}$ and $\int_{T_1}^{+\infty}c(t)e^{-t}dt<+\infty$ for some $T_1>T$. Denote $h(t):=\int_{t}^{+\infty}c(t_1)e^{-t_1}dt_1$. Let us recall the following concavity property of $G(h^{-1}(r))$.
\begin{Theorem}[\cite{GMY-concavity2}, see also \cite{GMY-boundary2}]
	 If  $G(t)\not\equiv+\infty$, then $G(h^{-1}(r))$ is concave with respect to  $r\in (0,\int_{T}^{+\infty}c(t)e^{-t}dt)$, $\lim\limits_{t\to T+0}G(t)=G(T)$ and $\lim\limits_{t \to +\infty}G(t)=0$.
	\label{maintheorem}
\end{Theorem}

The settings of $\varphi$, $\psi$ and $c$ follow from the (optimal) $L^2$ extension theorems. Ohsawa in \cite{Ohsawa3} gave an $L^2$ extension theorem with negligible weights from hyperplanes to bounded pseudoconvex domains in $\mathbb{C}^n$, in which the two plurisubharmonic functions $\varphi$ (denoted by $v$ in \cite{Ohsawa3}) and $\psi+2\log d(\cdot,H)$  first appeared, where $d(\cdot,H)$ is the distance function from the hyperplane $H$.  In \cite{guan-zhou13ap}, Guan-Zhou established an optimal $L^2$ extension theorem in a general setting, in which $\varphi$ and $\psi$ (denoted by $\Psi$ in \cite{guan-zhou13ap}) may not be plurisubharmonic functions and a general class of gain functions $c(t)$ was considered.

Note that a linear function is a degenerate case of a concave function.
It is natural to ask:

\begin{Problem}\label{Q:chara}
How can one characterize the concavity property degenerating to linearity?
\end{Problem}

Some necessary conditions for the concavity property of the minimal $L^2$ integrals degenerating to linearity can be found in \cite{GM,GY-concavity,GMY-concavity2,x-z}.

When $M=\Omega$ is an open Riemann surface and $Z_0$ is a single point set, Guan-Mi \cite{GM} gave an answer to Problem \ref{Q:chara} for  the case where $\varphi$ is subharmonic and $c$ is smooth,
and Guan-Yuan \cite{GY-concavity} gave an answer to Problem \ref{Q:chara} for  the case where  $\varphi$ may not be subharmonic and $c$ is Lebesgue measurable.  

In this article, we consider the case where $M=\Omega$ is an open Riemann surface and $Z_0$ may not be a single point set. We give an answer to Problem \ref{Q:chara} when $Z_0$ is a finite point set (Theorem \ref{thm:m-points}), and we give a necessary condition for the concavity property degenerating to linearity when $Z_0$ is an arbitrary analytic subset of $\Omega$ (Proposition \ref{p:infinite}). The proof is independent of the results for single point case.

\

Another motivation for studying the linear case comes from the (optimal) $L^2$ extension problem for holomorphic sections.

Let us recall the \textbf{(optimal) $L^2$ extension problem} (see \cite{DemaillyAG}, see also \cite{guan-zhou13ap}): 

\emph{Let $Y$ be a complex subvariety of a complex manifold $M$; given a holomorphic function $f$ (or a holomorphic section of a holomorphic vector) on $Y$ satisfying suitable $L^2$ conditions on $Y$, find a holomorphic extension $F$ of $f$ from $Y$ to $M$, together with a good (or even optimal) $L^2$ estimate of $F$ on $M$.
Furthermore, let $F_{\min}$ be the minimal holomorphic extension (which is the holomorphic extension with the minimal $L^2$ integral among all possible extensions), how to character the equality that the $L^2$ integral of $F_{\min}$ equals to the optimal estimate?}

The famous Ohsawa-Takegoshi $L^2$ extension theorem \cite{OT87} solved the existence part of $L^2$ extension problem.

 \begin{Theorem}[\cite{OT87}]
 	Let $D\subset\mathbb{C}^n$ be a bounded pseudoconvex domain, and let $H\subset$ be a complex hyperplane. Let $\varphi\in PSH(D)$. For any $f\in\mathcal{O}(D\cap H)$  satisfying
	$\int_{D\cap H}|f|^2e^{-\varphi}<+\infty,$
	there exists an $F\in\mathcal{O}(D)$ such that $F|_{D\cap H}=f$ and 
	$$\int_{D}|F|^2e^{-\varphi}\le C_D\int_{D\cap H}|f|^2e^{-\varphi},$$
	where $C_D$ is a constant depending only on the diameter of $D$.
\end{Theorem}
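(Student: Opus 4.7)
The plan is to solve the extension problem via the classical $\bar\partial$-method with a carefully chosen twisting weight. After an affine change of coordinates assume $H = \{z_n = 0\}$ and write $w = z_n$. Since $D$ is bounded pseudoconvex and $\varphi$ is plurisubharmonic, I would first reduce to a smooth situation: exhaust $D$ by an increasing sequence of smoothly bounded strictly pseudoconvex subdomains $D_\nu \Subset D$ and regularize $\varphi$ by a decreasing sequence of smooth plurisubharmonic functions $\varphi_\nu$. A weak-$L^2$ compactness argument together with a diagonal procedure reduces the problem to proving, in this smooth setting, a uniform bound whose constant depends only on the diameter of $D$.

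Next I would build a candidate extension by a cutoff. Let $\chi$ be smooth with $\chi(t) = 1$ for $t$ near $0$ and supported in $[-1,1]$, and set $\tilde F(z) = \chi(|w|^2/\epsilon)\, f(z')$, where $f$ has been extended to a neighborhood of $H$ by independence of $w$. Then $\tilde F$ is smooth, agrees with $f$ on $H$, and
\[
\bar\partial \tilde F = f(z')\, \chi'(|w|^2/\epsilon)\, \frac{w}{\epsilon}\, d\bar w
\]
is supported in the annulus $\{|w|^2 \sim \epsilon\}$. If I can solve $\bar\partial u = \bar\partial \tilde F$ with $u$ vanishing on $H$ and a uniform $L^2$ bound as $\epsilon \to 0$, then $F = \tilde F - u$ is the desired holomorphic extension.

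The core of the argument is a twisted $L^2$-estimate for $\bar\partial$, obtained from a Bochner--Kodaira--Nakano identity on $D$ with weight $\varphi + \psi$, where $\psi$ is plurisubharmonic with a logarithmic pole along $H$ (so that $e^{-\psi}$ blows up on $H$ and forces $u|_H = 0$), coupled with auxiliary positive functions $\eta, \lambda$ of $\log|w|^2$ chosen to satisfy a curvature positivity condition arising from the identity. This yields an estimate of the form
\[
\int_D \eta^{-1}|u|^2\, e^{-\varphi - \psi} \le \int_D (\eta + \lambda)\, |\bar\partial\tilde F|^2\, e^{-\varphi - \psi}.
\]
As $\epsilon \to 0$ the right-hand side concentrates on $\{w = 0\}$, and a residue computation identifies its limit with a constant multiple of $\int_{D\cap H}|f|^2 e^{-\varphi}$.

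The main obstacle is producing a constant that depends only on the diameter of $D$. A naive application of H\"ormander's estimate produces dependence on $\varphi$ or on finer geometric data; the delicate step is therefore the one-variable ODE coupling $\eta, \lambda, \psi$, which must be optimized so that the ratio between the right-hand side of the estimate and $\int_{D\cap H}|f|^2 e^{-\varphi}$ is controlled only by the $w$-extent of $D$. Taking limits in $\epsilon$, in the regularizations $\varphi_\nu$, and in the exhaustion $D_\nu \uparrow D$ then completes the proof.
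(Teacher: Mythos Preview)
The paper does not prove this statement. It is the classical Ohsawa--Takegoshi $L^2$ extension theorem, quoted in the introduction with the citation \cite{OT87} purely as historical background and motivation; no proof is given or even sketched anywhere in the paper. So there is no ``paper's own proof'' against which to compare your proposal.

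That said, your outline is a reasonable summary of the standard approach (cutoff plus twisted Bochner--Kodaira--Nakano / H\"ormander estimate with auxiliary functions $\eta,\lambda$ of $\log|w|^2$ and a logarithmic pole along $H$), essentially the strategy of the original Ohsawa--Takegoshi paper and its later refinements. If you want to flesh it out into an actual proof, the two places that need real work are (i) the precise choice of $\eta,\lambda,\psi$ so that the twisted curvature term is nonnegative while the final constant depends only on $\sup_D|w|$, and (ii) the limiting/regularization arguments, which are routine but require care. But none of this is in the present paper.
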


After the work of Ohsawa and Takegoshi, the $L^2$ extension problem was widely discussed for various cases and these $L^2$ extension theorems have many applications in the study of several complex variables and complex geometry, e.g., \cite{berndtsson1996,berndtsson annals,berndtsson paun,Blocki07,Chen03,Demaillyshm,DemaillyManivel,D2016,DHP,GZSOC,Manivel,MV07,OT87,Ohsawa2,Ohsawa3,Ohsawa4,Ohsawa5,OhsawaObservation,Popovici,siu74,Siu96,Siu98,Siu02}. Some of these works gave explicit good estimates in the proof of $L^2$ extension theorems, which could be regarded as attempts to the optimal constant problem in the $L^2$ extension theorem.

One of the motivations to consider the optimal estimate in $L^2$ extension problem comes from the Suita's conjecture \cite{suita72} on the comparison between the Bergman kernel $B_{\Omega}(z_0)$ (see \cite{Berg70}) and logarithmic capacity $c_{\beta}(z_0)$ (see \cite{S-O69}) on open Riemann surfaces. In \cite{OhsawaObservation}, Ohsawa observed a relation between the $L^2$ extension theorem with the inequality part of Suita's conjecture.  
 
 \begin{Conjecture}
 	[Suita Conjecture \cite{suita72}]Let $\Omega$ be an open Riemann surface, which admits a nontrivial Green function. Then $$(c_{\beta}(z_0))^2\le\pi B_{\Omega}(z_0)$$ 
 	for any $z_0\in\Omega$, and equality holds if and only if $\Omega$  is conformally equivalent to the unit disc less a (possible) closed	set of inner capacity zero.
 \end{Conjecture}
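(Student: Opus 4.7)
The plan is to derive Suita's conjecture from Theorem~\ref{thm:concavity-guan} applied with weight twice the Green function of $\Omega$, combined with a careful analysis of the two boundary asymptotics of the resulting minimal $L^2$ integral. Fix $z_0\in\Omega$; let $G_\Omega(\cdot,z_0)$ be the negative Green function of $\Omega$ with pole at $z_0$; set $\varphi:=2G_\Omega(\cdot,z_0)$; and choose a local coordinate $w$ vanishing at $z_0$. With $f\equiv 1$ as the germ at $z_0$, I would define $G(t)$ as in Theorem~\ref{thm:concavity-guan}. The constraint $(\tilde f-1,z_0)\in\mathcal I(\varphi)_{z_0}$ is equivalent to $\tilde f(z_0)=1$, because $\varphi$ has a simple logarithmic pole at $z_0$ and so $e^{-\varphi}\sim|w|^{-2}$ is not locally integrable, forcing $(\tilde f-1)(z_0)=0$.

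I would next compute the two asymptotics. At $t=0$, the reproducing property of the Bergman kernel gives $G(0)=1/B_\Omega(z_0)$. For $t\to+\infty$, using $G_\Omega(z,z_0)=\log|w|+\log c_\beta(z_0)+o(1)$ near $z_0$, the sublevel set $\{\varphi<-t\}$ is asymptotic to the disc $\{|w|<e^{-t/2}/c_\beta(z_0)\}$; on a disc of radius $r$ the minimal $L^2$ integral of a holomorphic function of value $1$ at the centre equals $\pi r^2$, yielding $\lim_{t\to+\infty}e^tG(t)=\pi/(c_\beta(z_0))^2$. Now Theorem~\ref{thm:concavity-guan} says $h(r):=G(-\log r)$ is concave on $(0,1]$; since $G\ge 0$ is decreasing, $h(0^+)=0$, so concavity forces the chord inequality $h(r)\ge r\,h(1)$, i.e.\ $r\mapsto h(r)/r$ is decreasing. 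Hence
$$\frac{1}{B_\Omega(z_0)}=G(0)=h(1)\le\lim_{r\to 0^+}\frac{h(r)}{r}=\lim_{t\to+\infty}e^tG(t)=\frac{\pi}{(c_\beta(z_0))^2},$$
which rearranges to $(c_\beta(z_0))^2\le\pi B_\Omega(z_0)$.

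For the equality case, equality above forces $h(r)/r$ to be constant, i.e.\ $G(-\log r)$ is linear in $r$. I would then invoke the characterization of the linear degenerate case (the central theme of this series of papers) to translate this analytic rigidity into geometry: linearity pins down the extremal functions in every $G(t)$ and forces the conjugate differential of $G_\Omega(\cdot,z_0)$ to be exact on $\Omega$, so the multi-valued map $\exp(G_\Omega+iG_\Omega^{\ast})$ becomes a well-defined holomorphic $F:\Omega\to\mathbb D$, which one then shows is a conformal equivalence of $\Omega$ onto $\mathbb D$ with a closed polar set removed. The \emph{main obstacle} is precisely this last step: upgrading the analytic equality condition to a global biholomorphism and identifying the removed set as one of inner capacity zero is the delicate part, whereas the inequality itself follows almost mechanically from Theorem~\ref{thm:concavity-guan} once the boundary data are correctly identified.
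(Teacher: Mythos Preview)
Your inequality argument is essentially correct and is exactly the concavity route to Suita that underlies this paper (and \cite{G16}): concavity of $r\mapsto G(-\log r)$ together with the Bergman-kernel value at $r=1$ and the logarithmic-capacity asymptotic as $r\to 0$ gives the bound. Two minor caveats: Theorem~\ref{thm:concavity-guan} is stated for pseudoconvex domains in $\mathbb{C}^n$, so on a general open Riemann surface you must invoke the Stein-manifold version (Theorem~\ref{thm:general_concave} here, with $c\equiv 1$) and work with holomorphic $(1,0)$-forms rather than functions; with the paper's convention $|f|^2=\sqrt{-1}f\wedge\bar f$ one gets $G(0)=2/B_\Omega(z_0)$ and the limit $2\pi/c_\beta(z_0)^2$, which still yields $(c_\beta)^2\le\pi B_\Omega$.

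For the equality part your diagnosis is right but your ``main obstacle'' is exactly the content of the paper, and you should not try to construct the conformal map by hand. The paper's route is: equality $\Longleftrightarrow$ $G(-\log r)$ linear $\Longleftrightarrow$ (by the $m=1$ case of Theorem~\ref{thm:m-points}, already in \cite{GY-concavity,GMY-concavity2}) the four structural conditions hold, which with $\varphi\equiv0$, $\psi=2G_\Omega(\cdot,z_0)$, $c\equiv1$ collapse to $\chi_{z_0}=1$. Your claim that ``linearity forces $\exp(G_\Omega+iG_\Omega^{\ast})$ to be single-valued'' is literally the statement $\chi_{z_0}=1$, but proving this implication from linearity is the nontrivial necessity proof (Section~3.2 here for general $m$, \cite{GY-concavity} for $m=1$): one must first pin down $\psi$ and the structure of $\varphi+\psi$, then use Lemma~\ref{l:n} to exclude any non-harmonic defect, and only then read off the character condition via Lemma~\ref{r:min}. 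Once $\chi_{z_0}=1$ is obtained, the paper does \emph{not} build the biholomorphism either---it simply cites \cite{suita72} for the equivalence of $\chi_{z_0}=1$ with $\Omega$ being conformally equivalent to the disc minus a set of inner capacity zero. So your sketch points in the right direction, but the passage from linearity to $\chi_{z_0}=1$ is a genuine theorem, not a routine step.
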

 
  Suita proved that $B_{\Omega}(z)=\frac{1}{\pi}\frac{\partial^2}{\partial z\partial\overline{z}}\log c_{\beta}(z)$, thus there is a geometric interpretation of Suita conjecture (see \cite{suita72}): the curvature of the metric $c_{\beta}^2|dz|^2$ is bounded above by $-4$, and it equals $-4$ if and only if $\Omega$  is conformally equivalent to the unit disc less a (possible) closed set of inner capacity zero.
 
The  Suita conjecture  corresponds to the following optimal $L^2$ extension problem: the case of extending from a single point to open Riemann surfaces with trivial weights.

\emph{\textbf{Inequality part:} There exists a holomorphic $(1,0)$ form $F$ on $\Omega$ such that $F(z_0)=dz$ and $\int_{\Omega}|F|^2\le \frac{2\pi}{(c_{\beta}(z_0))^2}$, where $z$ is a local coordinate near $z_0\in\Omega$;}

\emph{\textbf{Equality part:} For the minimal holomorphic extension form $F_{\min}$, equality $\int_{\Omega}|F_{\min}|^2= \frac{2\pi}{(c_{\beta}(z_0))^2}$ holds if and only if $\Omega$  is conformally equivalent to the unit disc less a (possible) closed	set of inner capacity zero.}

\


In \cite{GZZ11} (see also \cite{ZGZ}), a method of introducing undetermined functions with using ODE was initiated to approach the optimal constant problem in the $L^2$ extension theorem. For bounded pseudoconvex domains in $\mathbb{C}^n$, Blocki \cite{Blocki-inv} developed the equation of undetermined functions in \cite{GZZ11} (see also \cite{ZGZ} and \cite{Blocki12}) and got the Ohsawa-Takegoshi $L^2$ extension theorem with an optimal estimate which deduced the inequality part of the Suita conjecture \cite{suita72} for bounded planar domains. Continuing the previous work \cite{ZGZ}, Guan-Zhou (see \cite{GZsci}, see also \cite{guan-zhou CRMATH2012}) gave an optimal $L^2$ extension theorem with negligible weights on Stein manifolds which deduced the inequality part of the Suita conjecture for open Riemann surfaces.
Subsequently,  Guan-Zhou \cite{guan-zhou13ap} established an optimal $L^2$ extension theorem in a general setting on Stein manifolds, as applications, they proved the equality part of Suita conjecture and  gave a geometric meaning of the optimal $L^2$ extension theorem. After that, some further developments and applications of the optimal $L^2$ extension can be found in \cite{BL16},\cite{cao17},\cite{HPS18},\cite{PT18},\cite{ZZ2018},\cite{ZZ2019},\cite{ZhouZhu20siu's}, and so on. The jets version of the $L^2$ extension, as a generalization of the classical Ohsawa-Takegoshi $L^2$ extension theorem, was considered by Popovici in \cite{Popovici} (see \cite{D2016,RZ2021} for various settings). In \cite{Hoso2020} and \cite{ZhouZhu2022}, the optimal jets $L^2$ extension theorems were been established.

The optimal $L^2$ extension theorems extended the inequality part of Suita's conjecture to general cases (general manifolds, subvarieties and weights). Therefore, a natural question is

\begin{Problem}\label{problem:2}
Can one prove the equality part of Suita conjecture for general cases, i.e.,
characterize the equality in the optimal $L^2$ extension problem in general settings?
\end{Problem}

The equality in optimal $L^2$ extension problem  is that

\centerline{\textbf{Minimal $L^2$ integral of holomorphic extensions $=$ optimal estimate.}}
 If the above equality does not hold, there exists a holomorphic extension $F$ of $f$ from $Y$ to $M$ such that the $L^2$ integral of $F$ is strictly smaller than the optimal estimate.

The solution of the equality part of (extended) Suita conjecture (see \cite{guan-zhou13ap,Yamada}) gives 
a characterization of the holding of equality in optimal $L^2$ extension problem of extending from a single point to open Riemann surfaces with trivial weights (or harmonic weights). 
For the general case, we have the following observation:

\emph{If the equality in optimal $L^2$ extension problem holds, the concavity property for the corresponding minimal $L^2$ integrals of holomorphic extensions on the sublevel sets must degenerate to linearity (see \cite{GY-concavity,GM}).}

Thus, studying the linearity case of minimal $L^2$ integrals will aid in studying the equality in  optimal $L^2$ extension problem. Based on the researches on concavity property of minimal $L^2$ integrals, Guan-Mi \cite{GM} gave a solution to Problem \ref{problem:2} for the case of extending from single point to open Riemann surfaces with subharmonic weights, and Guan-Yuan generalized it to the case of weights that may not be subharmonic. In \cite{GMY-concavity2},  Guan-Mi-Yuan proved a weighted jets version of Suita conjecture. The above-mentioned results concern extensions from a single point to open Riemann surfaces, and their proofs all rely on the solution of the (extended) Suita conjecture.

In this article, utilizing the results on Problem \ref{Q:chara} (Theorem \ref{thm:m-points} and Proposition \ref{p:infinite}), we provide a characterization of the holding of equality in optimal jets $L^2$ extension problem from arbitrary analytic subsets to open Riemann surfaces, which proves the weighted jets version of Suita conjecture for analytic subsets and gives an answer to Problem \ref{problem:2} for the open Riemann surfaces case. It is worth noting that this proof is independent of the solution of the (extended) Suita conjecture. When the equality in optimal jets $L^2$ extension problem holds, we give an expression of the minimal holomorphic extension form $F_{\min}$.

\subsection{A characterization for linearity to hold on open Riemann surfaces}
\label{sec:1.2}
\

Let $\Omega$  be an open Riemann surface, which admits a nontrivial Green function $G_{\Omega}$ ($G_{\Omega}<0$ on $\Omega$), and $K_{\Omega}$ be the canonical (holomorphic) line bundle on $\Omega$. Take 
$Z_0:=\{z_1,z_2,\ldots,z_m\}\subset\Omega$ 
be a subset of $\Omega$ satisfying that $z_j\not=z_k$ for any $j\not=k$.

 Denote the set of subharmonic functions on $\Omega$ by $SH(\Omega)$ ($SH^-(\Omega)$ denotes all negative subharmonic functions on $\Omega$). Let $\psi\in SH^-(\Omega)$, and $\varphi$ be a  function on $\Omega$ such that $\varphi+\psi\in SH(\Omega)$. Let $c(t)$ be a positive function on $(0,+\infty)$ satisfying $c(t)e^{-t}$ is decreasing on $(0,+\infty)$,  $\int_{0}^{+\infty}c(s)e^{-s}ds<+\infty$ and $e^{-\varphi}c(-\psi)$ has  a  positive lower bound on any compact subset of $\Omega\backslash E$, where $E\subset\{\psi=-\infty\}$ is a discrete point subset of $\Omega$. Let $f$ be a holomorphic $(1,0)$ form on a neighborhood of $Z_0$, and  $\mathcal{F}_{z_j}\supset\mathcal{I}(\varphi+\psi)_{z_j}$ be an ideal of $\mathcal{O}_{z_j}$ for any $j\in\{1,2,\ldots,m\}$.
Denote 
\begin{equation*}
\begin{split}
\inf\bigg\{\int_{\{\psi<-t\}}|\tilde{f}|^{2}e^{-\varphi}c(-\psi):(\tilde{f}-f,z_j)\in(\mathcal{O}(K_{\Omega})&)_{z_j}\otimes\mathcal{F}_{z_j} \,for\,j\in\{1,2,\ldots,m\} \\&\&{\,}\tilde{f}\in H^{0}(\{\psi<-t\},\mathcal{O}(K_{\Omega}))\bigg\}
\end{split}
\end{equation*}
by $G(t;c)$ (without misunderstanding, we denote $G(t;c)$ by $G(t)$),  where $t\in[0,+\infty)$.  $G(h^{-1}(r))$ is concave with respect to $r$ (by Theorem \ref{maintheorem}, see also \cite{GY-concavity,GMY-concavity2}), where  $h(t)=\int_{t}^{+\infty}c(s)e^{-s}ds$ for any $t\ge0$.

Before stating the main result,  we recall some notations (see \cite{OF81}, see also \cite{guan-zhou13ap,GY-concavity,GMY-concavity2}).
 Let $z_0\in\Omega$.
 Let $P:\Delta\rightarrow\Omega$ be the universal covering from unit disc $\Delta$ to $\Omega$.
 We call  $f\in\mathcal{O}(\Delta)$  a multiplicative function,
 if there is a character $\chi$, which is the representation of the fundamental group of $\Omega$, such that $g^{*}f=\chi(g)f$,
 where $|\chi|=1$ and $g$ is an element of the fundamental group of $\Omega$. Denote the set of such kinds of $f$  by $\mathcal{O}^{\chi}(\Omega)$.

It is known that for any harmonic function $u$ on $\Omega$,
there exists a $\chi_{u}$ and a multiplicative function $f_u\in\mathcal{O}^{\chi_u}(\Omega)$,
such that $|f_u|=P^{*}\left(e^{u}\right)$.
If $u_1-u_2=\log|f|$, then $\chi_{u_1}=\chi_{u_2}$,
where $u_1$ and $u_2$ are harmonic functions on $\Omega$ and $f\in\mathcal{O}(\Omega)$.
Recall that for the Green function $G_{\Omega}(z,z_0)$,
there exist a $\chi_{z_0}$ and a multiplicative function $f_{z_0}\in\mathcal{O}^{\chi_{z_0}}(\Omega)$, such that $|f_{z_0}(z)|=P^{*}\left(e^{G_{\Omega}(z,z_0)}\right)$ (see \cite{suita72}).

The following theorem gives a characterization of $G(h^{-1}(r))$ being linear, which is an answer to Problem \ref{Q:chara} in the case of finite points on open Riemann surfaces.

\begin{Theorem}
	\label{thm:m-points}
 Assume that $G(0)\in(0,+\infty)$ and $(\psi-2p_jG_{\Omega}(\cdot,z_j))(z_j)>-\infty$ for  $j\in\{1,2,\ldots,m\}$, where $p_j=\frac{1}{2}\nu(dd^c(\psi),z_j)>0$ and $\nu(dd^c(\psi),z_j)$ is the Lelong number of $\psi$ at $z_j$. 
 Then $G(h^{-1}(r))$ is linear with respect to $r\in(0,\int_0^{+\infty}c(t)e^{-t}dt)$ if and only if the following statements hold:
	
	$(1)$ $\psi=2\sum_{1\le j\le m}p_jG_{\Omega}(\cdot,z_j)$;
	
	$(2)$ $\varphi+\psi=2\log|g|+2\sum_{1\le j\le m}G_{\Omega}(\cdot,z_j)+2u$ and $\mathcal{F}_{z_j}=\mathcal{I}(\varphi+\psi)_{z_j}$ for any $j\in\{1,2,\ldots,m\}$, where $g\in\mathcal{O}(\Omega)$ such that $ord_{z_j}(g)=ord_{z_j}(f)$ for any $j\in\{1,2,\ldots,m\}$ and $u$ is a harmonic function on $\Omega$;
	
	$(3)$ $\prod_{1\le j\le m}\chi_{z_j}=\chi_{-u}$, where $\chi_{-u}$ and $\chi_{z_j}$ are the  characters associated to the functions $-u$ and $G_{\Omega}(\cdot,z_j)$ respectively;
	
	$(4)$ There is a constant $c_0\in\mathbb{C}\backslash\{0\}$ s.t. $\lim\limits_{z\rightarrow z_k}\frac{f}{gP_*\Big(f_u\big(\prod\limits_{1\le j\le m}f_{z_j}\big)\big(\sum\limits_{1\le j\le m}p_{j}\frac{d{f_{z_{j}}}}{f_{z_{j}}}\big)\Big)}=c_0$ for any $k\in\{1,2,\ldots,m\}$.
\end{Theorem}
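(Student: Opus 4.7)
The plan is to prove both implications separately, with the sufficient direction going via an explicit construction of the extremal extension and the necessary direction via localization to each $z_j$ combined with the global structure forced by linearity.

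For the sufficient direction, I would assume (1)--(4) and build a concrete candidate extremal form. Conditions (1) and (2) tell us the singular parts of $\psi$ and $\varphi+\psi$ are sums of Green functions (plus a harmonic correction $u$); this lets us define, on the universal cover $\Delta$, the holomorphic $(1,0)$ form $f_u\bigl(\prod_{j} f_{z_j}\bigr)\bigl(\sum_{j} p_j \,df_{z_j}/f_{z_j}\bigr)$. Condition (3) on the characters guarantees that after multiplying by the single-valued holomorphic function $g$, this descends to a holomorphic form $F$ on $\Omega$. Condition (4) ensures $F - c_0 f$ lies in $(\mathcal{O}(K_\Omega))_{z_j}\otimes \mathcal{F}_{z_j}$ at each $z_j$, so $c_0 f$ is a valid extension candidate. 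A direct calculation on the level sets $\{\psi<-t\}$ using the change of variables $s=-\psi$ and the explicit form of $\psi$ as a sum of Green functions yields $\int_{\{\psi<-t\}} |F|^2 e^{-\varphi}c(-\psi) = C\,h(t)$; the optimal $L^2$ extension theorem of Guan-Zhou then shows this $F$ is the minimizer and $G(t)=C\,h(t)$, which is linear in $h(t)=r$.

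For the necessary direction, assume $G(h^{-1}(r))$ is linear, so $G(t)=a\,h(t)$ with $a=G(0)/h(0)\in(0,\infty)$. The strategy is to reduce to the single-point case handled in \cite{GY-concavity,GMY-concavity2}. First, using the concavity framework of Theorem \ref{thm:general_concave} and the optimal $L^2$ extension theorem applied on $\{\psi<-t\}$, linearity is equivalent to achieving equality in the extension estimate for every $t$, which gives a precise derivative formula for $G$. Localizing near each $z_j$: the hypothesis $(\psi - 2p_j G_\Omega(\cdot,z_j))(z_j)>-\infty$ fixes the Lelong singularity, and the single-point linearity results applied to the contribution at $z_j$ force $\psi$ to coincide with $2p_j G_\Omega(\cdot,z_j)$ modulo a bounded harmonic term near $z_j$. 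Globally, the equality in $L^2$ extension combined with Poisson-Jensen--type identities on sublevel sets of Green functions forces this bounded harmonic term to vanish and propagates the local formula to $\psi = 2\sum_{j} p_j G_\Omega(\cdot,z_j)$ on all of $\Omega$, yielding (1). An analogous analysis of the $\varphi$-dependence of the extremal $L^2$ norm (using $\mathcal{F}_{z_j}\supset \mathcal{I}(\varphi+\psi)_{z_j}$) gives the decomposition (2) and forces $\mathcal{F}_{z_j}=\mathcal{I}(\varphi+\psi)_{z_j}$.

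Finally, conditions (3) and (4) come from the uniqueness of the extremal extension. With $\psi$ and $\varphi+\psi$ fixed as above, the explicit form constructed in the first paragraph is the unique candidate on the universal cover; for it to descend to a single-valued holomorphic form on $\Omega$ one needs the character relation (3), and for it to represent an extension of $f$ one needs the normalization (4). The main obstacle I anticipate is the propagation step: showing that linearity, which is an integrated statement, forces the \emph{global} identity $\psi=2\sum_j p_j G_\Omega(\cdot,z_j)$ from the purely local singular data; this requires squeezing the equality case of the optimal $L^2$ extension theorem at every level $t$ and carefully using the maximum principle for the difference $\psi - 2\sum_j p_j G_\Omega(\cdot,z_j)$, which is subharmonic, nonpositive, and bounded near each $z_j$ by hypothesis.
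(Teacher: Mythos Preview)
Your overall architecture is reasonable, but there are genuine gaps in both directions.

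For the sufficiency, constructing $F=c_0gP_*\bigl(f_u(\prod_j f_{z_j})(\sum_j p_j\,df_{z_j}/f_{z_j})\bigr)$ is correct, but you have not explained why $F$ is actually the minimizer. The optimal $L^2$ extension theorem only gives an \emph{upper bound} on $G(t)$; it does not single out $F$. The paper's mechanism is different and not trivial: one computes $\int_\Omega |F|^2 e^{-\varphi}$ via the identity $|F|^2e^{-\varphi}=|c_0|^2\sqrt{-1}\,\partial\bar\partial e^{\psi}$ and the formula $\sqrt{-1}\int_\Omega\partial\bar\partial e^{\psi}=2\pi\sum_j p_j$ (Lemma~\ref{l:4}, proved by exhausting $\Omega$ by subsurfaces with controlled Green functions), and then shows $F$ is orthogonal to all competitors via $\int_\Omega \partial e^{\psi}\wedge\bar\beta=0$ for every $L^2$ holomorphic $(1,0)$-form $\beta$ (Lemma~\ref{l:5}). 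Your ``direct calculation on level sets'' does not supply either ingredient.

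For the necessity, your order of argument does not work as stated. You propose to localize first and apply the single-point linearity theorem near each $z_j$, but before knowing $\psi=2\sum_j p_jG_\Omega(\cdot,z_j)$ the sublevel sets $\{\psi<-t\}$ need not split into disjoint neighborhoods of the $z_j$ (one only has $\psi\le 2\sum_j p_jG_\Omega(\cdot,z_j)$, so $\{\psi<-t\}$ can be large), and hence there is no ``contribution at $z_j$'' to which the single-point result applies. The paper first establishes (1) globally by a squeeze: a direct lower bound on $G(0)/h(0)$ via smoothing $u_0,\psi_2$, the matching upper bound from the extension Proposition~\ref{p:extension}, and then the measure-theoretic Lemma~\ref{l:psi=G} (not a bare maximum-principle argument---subharmonic, nonpositive, and bounded near $z_j$ does not force the difference to vanish). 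Harmonicity of $u$ in (2) is obtained not by local analysis but by the necessary condition of Lemma~\ref{l:n} (linearity forbids the existence of a suitable $\tilde\varphi\ge\varphi$), combined with the construction of Lemma~\ref{l:cu}. Only after (1) and (2) are in hand does the paper localize---now the sublevel sets decouple---and apply the single-point Lemma~\ref{r:min} to extract (3) and (4). Your proposal would need to be reorganized along these lines.
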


When $m=1$, Theorem \ref{thm:m-points} can be found in \cite{GY-concavity}. For the case that $Z_0$ is an infinite set, we give a necessary condition for $G(h^{-1}(r))$ being linear in Section \ref{sec:4.2}. 

If $\Omega$  does not admit a nontrivial Green function, there is no $\psi\in SH^-(\Omega)$ such that $\nu(dd^c(\psi),z_j)>0$. For this case,  deleting the requirement ``$\psi<0$",  we can also obtain a  characterization of $G(h^{-1}(r))$ being linear. In fact, $\{\psi<-t\}$ is an open Riemann surface, which admits a nontrivial Green function, thus we can use Theorem \ref{thm:m-points} by replacing $\Omega$ by $\{\psi<-t\}$ for all $t\in\mathbb{R}$ to obtain the characterization of $G(h^{-1}(r))$ being linear on $(0,\int_{-\infty}^{+\infty}c(t)e^{-t}dt)$.

\begin{Remark}
	 For any $\{z_1,z_2,\ldots,z_m\}$, there exists a harmonic function $u$ on $\Omega$ such that $\prod_{1\le j\le m}\chi_{z_j}=\chi_{-u}$. In fact, as $\Omega$ is an open Riemann surface, there exists  $\tilde{f}\in\mathcal{O}(\Omega)$ satisfying that $u:=\log|\tilde{f}|-\sum_{1\le j\le m}G_{\Omega}(\cdot,z_j)$ is harmonic on $\Omega$, which implies that $\prod_{1\le j\le m}\chi_{z_j}=\chi_{-u}$.
\end{Remark}

We give an expression of the ``minimal" holomorphic form on $\Omega$ when the linearity holds in Theorem \ref{thm:m-points}.
\begin{Remark}\label{rem:1.1}
When the four statements in Theorem \ref{thm:m-points} hold,
$$F:=c_0gP_*\Bigg(f_u\bigg(\prod\limits_{1\le j\le m}f_{z_j}\bigg)\bigg(\sum\limits_{1\le j\le m}p_{j}\frac{d{f_{z_{j}}}}{f_{z_{j}}}\bigg)\Bigg)$$
is a holomorphic $(1,0)$ form  on $\Omega$, and it
 is the unique ``minimal" holomorphic $(1,0)$ form on all sublevel sets $\{\psi<-t\}$, i.e.,  $(F-f,z_j)\in(\mathcal{O}(K_{\Omega}))_{z_j}\otimes\mathcal{F}_{z_j}$ for any $j\in\{1,2,\ldots,m\}$ and
	$G(t)=\int_{\{\psi<-t\}}|F|^2e^{-\varphi}c(-\psi)$
	for any $t\ge0$.
\end{Remark}

\subsection{Optimal jets $L^2$ extension problem and generalized Suita conjecture}\label{sec:1.3}
\

In this section, as an application of Theorem \ref{thm:m-points}, we obtain a characterization of the holding of equality in optimal jets $L^2$ extension problem from finite points to open Riemann surfaces. The result for infinite points case can be found in Section \ref{sec:5.3}. Based on these two results, we prove a weighted jets version of Suita conjecture for analytic subsets, which gives an answer to Problem \ref{problem:2} for the open Riemann surfaces case.

Let $\Omega$, $Z_0$, $z_j$, $\varphi$ and $\psi$  be as in Section \ref{sec:1.2}. Let $w_j$ be a local coordinate on a neighborhood $V_{z_j}\Subset\Omega$ of $z_j$ satisfying $w_j(z_j)=0$ for $j\in\{1,2,\ldots,m\}$, where $V_{z_j}\cap V_{z_k}=\emptyset$ for any $j\not=k$. Denote that $V_0:=\cup_{1\le j\le n}V_{z_j}$.  
The logarithmic capacity $c_{\beta}(z)$  (see \cite{S-O69}) is locally defined by
$$c_{\beta}(z_j):=\exp\lim_{z\rightarrow z_j}(G_{\Omega}(z,z_j)-\log|w_j(z)|).$$ 

Let $k_j$ be a nonnegative integer for any $j\in\{1,2,\ldots,m\}$. Assume that    $p_j:=\frac{1}{2}v(dd^{c}\psi,z_j)>0$, $\frac{1}{2}\nu(dd^c(\varphi+\psi),z_j)=k_j+1$ and $\alpha_j:=(\varphi+\psi-2(k_j+1)G_{\Omega}(\cdot,z_j))(z_j)>-\infty$ for any $j$. Let $c(t)$ be a positive measurable function on $(0,+\infty)$ satisfying $c(t)e^{-t}$ is decreasing on $(0,+\infty)$ and $\int_{0}^{+\infty}c(s)e^{-s}ds<+\infty$. 

We give a characterization of the holding of equality in optimal jets $L^2$ extension problem from finite points to open Riemann surfaces.
\begin{Theorem}
\label{c:L2-1d-char}Let $f\in H^0(V_0,\mathcal{O}(K_{\Omega}))$ satisfy $f=a_jw_j^{k_j}dw_j$ on $V_{z_j}$ for any $j$,
  $a_j$ is a sequence of constants such that $\sum_{1\le j\le m}|a_j|\not=0$. Then there exists  $F\in H^0(\Omega,\mathcal{O}(K_{\Omega}))$  such that $F=f+o(w_j^{k_j})dw_j$ near $z_j$ for any $j$ and
 \begin{equation}
 	\label{eq:210902a}
 	\int_{\Omega}|F|^2e^{-\varphi}c(-\psi)\leq\left(\int_0^{+\infty}c(s)e^{-s}ds\right)\sum_{1\le j\le m}\frac{2\pi|a_j|^2e^{-\alpha_j}}{p_jc_{\beta}(z_j)^{2(k_j+1)}}.
 \end{equation}

 Moreover,
 denoting that the minimal $L^2$ integral of holomorphic extensions
 $C_{\Omega,f}:=\inf\big\{\int_{\Omega}|\tilde{F}|^2e^{-\varphi}c(-\psi):\tilde{F}\in H^0(\Omega,\mathcal{O}(K_{\Omega}))$ such that $F=f+o(w_j^{k_j})dw_j$ near $z_j$ for any $j\big\}$, 
  equality 
  \begin{equation}
  	\label{eq:241208b}
  	C_{\Omega,f}=\left(\int_0^{+\infty}c(s)e^{-s}ds\right)\sum_{1\le j\le m}\frac{2\pi|a_j|^2e^{-\alpha_j}}{p_jc_{\beta}(z_j)^{2(k_j+1)}}
  \end{equation}
 holds if and only if the following statements hold:

	$(1)$ $\psi=2\sum_{1\le j\le m}p_jG_{\Omega}(\cdot,z_j)$;
	
	$(2)$ $\varphi+\psi=2\log|g|+2\sum_{1\le j\le m}(k_j+1)G_{\Omega}(\cdot,z_j)+2u$, where $g\in\mathcal{O}(\Omega)$ such that $g(z_j)\not=0$ for any $j\in\{1,2,\ldots,m\}$ and $u$ is a harmonic function on $\Omega$;
	
	$(3)$ $\prod_{1\le j\le m}\chi_{z_j}^{k_j+1}=\chi_{-u}$;
	
	$(4)$ There is a constant $c_0\in\mathbb{C}\backslash\{0\}$ s.t. $\lim\limits_{z\rightarrow z_k}\frac{f}{gP_*\Big(f_u\big(\prod\limits_{1\le j\le m}f_{z_j}^{k_j+1}\big)\big(\sum\limits_{1\le j\le m}p_{j}\frac{d{f_{z_{j}}}}{f_{z_{j}}}\big)\Big)}=c_0$ for any $k\in\{1,2,\ldots,m\}$.
\end{Theorem}

We give an example of Theorem \ref{c:L2-1d-char} in Appendix \ref{appendix}. 

In the $L^2$ extension theorems (see \cite{Ohsawa5,guan-zhou13ap,D2016}), letting the weight $\varphi\equiv0$ for simplicity, the $L^2$ condition on $f$  was set as $$\limsup_{t\rightarrow+\infty}\int_{\{-t-1<\psi<-t\}}|f|^2e^{-\psi}<+\infty,$$ where $\psi$ is the polar function.  For jets extension,  
let $\Omega=\Delta$ be the unit disc in $\mathbb{C}$, $Z_0=\{o\}$, $\varphi\equiv0$, $\psi=2(k+1)\log|w|$ and $f=\sum_{l\ge0}a_lw^l$ on $\Delta$, then
it is clear that $\limsup_{t\rightarrow+\infty}\int_{\{-t-1<\psi<-t\}}|f|^2e^{-\psi}<+\infty$ if and only if $a_l=0$ for any $l<k$. Thus, we assume that all the terms of order $<k_j$ of $f$ at $z_j$ vanish  in the above theorem.

\begin{Remark}\label{rem:1.2}
When the four statements in Theorem \ref{c:L2-1d-char} hold,
$$c_0gP_*\Bigg(f_u\bigg(\prod\limits_{1\le j\le m}f_{z_j}^{k_j+1}\bigg)\bigg(\sum_{1\le j\le m}p_{j}\frac{d{f_{z_{j}}}}{f_{z_{j}}}\bigg)\Bigg)$$
 is the unique  $F\in H^0(\Omega,\mathcal{O}(K_{\Omega}))$ such that $F=f+o(w_j^{k_j})dw_j$ near $z_j$ for any $j$ and
$
 	\int_{\Omega}|F|^2e^{-\varphi}c(-\psi)\leq\left(\int_0^{+\infty}c(s)e^{-s}ds\right)\sum_{1\le j\le m}\frac{2\pi|a_j|^2e^{-\alpha_j}}{p_jc_{\beta}(z_j)^{2(k_j+1)}}.
$ 
\end{Remark}

In the following, we consider the optimal $L^2$ extension problem from arbitrary analytic subsets to $\Omega$.

Let $S:=\{z_j:1\le j< \gamma\}$ be an analytic subset of $\Omega$, where $\gamma\in\mathbb{Z}_{>1}\cup\{+\infty\}$. Let $\psi\in SH^-(\Omega)$ and   $\varphi+\psi\in SH(\Omega)$. Let $c(t)$ is a positive function on $(0,+\infty)$ satisfying $c(t)e^{-t}$ is decreasing on $(0,+\infty)$ and $\int_0^{+\infty}c(t)e^{-t}dt<+\infty.$ Let $k_j$ be a nonnegative integer for any $1\le j<\gamma$. Assume that  $\nu(dd^c\psi,z_j)=\nu(dd^c(\varphi+\psi),z_j)=2(k_j+1)$ and  $$\alpha_j:=(\varphi+\psi-2(k_j+1)G_{\Omega}(\cdot,z_j))(z_j)>-\infty$$ 
for any $j$. Let $w_j$ be a local coordinate on a neighborhood $V_{z_j}\Subset\Omega$ of $z_j$ satisfying $w_j(z_j)=0$ for $j.$

Choosing $f$ be a holomorphic $(1,0)$ form on a neighborhood of $S$ satisfying $f=a_jw_j^{k_j}dw_j$ near $z_j$ for any $j$, where $\sum_{1\le j<\gamma}|a_j|\not=0$,  
the optimal jets $L^2$  extension theorem (see Proposition \ref{p:extension}) shows that there exists an $F\in H^0(\Omega,\mathcal{O}(K_{\Omega}))$ such that $F=f+o(w_j^{k_j})dw_j$ near $z_j$ for any $j$ and 
\begin{equation}
	\label{eq:1218a}
	\int_{\Omega}|F|^2e^{-\varphi}c(-\psi)\leq\left(\int_0^{+\infty}c(s)e^{-s}ds\right)\sum_{1\le j<\gamma}\frac{2\pi|a_j|^2e^{-\alpha_j}}{(k_j+1)c_{\beta}(z_j)^{2(k_j+1)}}.
\end{equation}

The minimal $L^2$ integral $C_{\Omega,f}$ of holomorphic extensions  is defined by
$$\inf\left\{\int_{\Omega}|\tilde F|^2e^{-\varphi}c(-\psi):\tilde F\in H^0(\Omega,\mathcal{O}(K_{\Omega}))\,\&\,\tilde F=f+o(w_j^{k_j})dw_j\,\text{for  $1\le j<\gamma$}\right\}.$$
Especially, when $\gamma=2$, $\varphi\equiv0$, $k_1=0$ and $c\equiv1$, we have $C_{\Omega,f}=\frac{2|a_1|^2}{B_{\Omega}(z_1)}$. Inequality \eqref{eq:1218a} shows that 
\begin{equation}
	\label{eq:231231a}
	C_{\Omega,f}\le\left(\int_0^{+\infty}c(s)e^{-s}ds\right)\sum_{1\le j<\gamma}\frac{2\pi|a_j|^2e^{-\alpha_j}}{(k_j+1)c_{\beta}(z_j)^{2(k_j+1)}},
\end{equation}
which is a generalization of the inequality part of Suita conjecture, i.e., $(c_{\beta}(z_0))^2\le\pi B_{\Omega}(z_0)$.

Using Theorem \ref{c:L2-1d-char} and Theorem \ref{p:infinite-extension}, we obtain a sufficient and necessary condition for inequality \eqref{eq:231231a} to become an equality, which gives a solution to Problem \ref{problem:2} on the open Riemann surfaces.

\begin{Corollary}\label{c:::}
	Equality
	$$C_{\Omega,f}=\left(\int_0^{+\infty}c(s)e^{-s}ds\right)\sum_{1\le j<\gamma}\frac{2\pi|a_j|^2e^{-\alpha_j}}{(k_j+1)c_{\beta}(z_j)^{2(k_j+1)}}$$
	holds if and only if 
	
		$(1)$ $\gamma<+\infty$ and $\psi=2\sum_{1\le j<\gamma}(k_j+1)G_{\Omega}(\cdot,z_j)$;
	
	$(2)$ $\varphi=2\log|g|+2u$, where $g\in\mathcal{O}(\Omega)$ such that $g(z_j)\not=0$ for any $j$ and $u$ is a harmonic function on $\Omega$;
	
	$(3)$ $\prod_{1\le j<\gamma}\chi_{z_j}^{k_j+1}=\chi_{-u}$;
	
	$(4)$  $\lim_{z\rightarrow z_k}\frac{f}{gP_*\Big(f_u\big(\prod\limits_{1\le j<\gamma}f_{z_j}^{k_j+1}\big)\big(\sum\limits_{1\le j<\gamma}(k_j+1)\frac{d{f_{z_{j}}}}{f_{z_{j}}}\big)\Big)}=c_0$ for any $k\in\{1,2,\ldots,\gamma-1\}$, where $c_0\in\mathbb{C}\backslash\{0\}$ is a constant independent of $k$.
\end{Corollary}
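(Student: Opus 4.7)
The plan is to derive Corollary \ref{c:::} by specializing Theorem \ref{c:L2-1d-char} to the exponents $p_j=1$, $k_j=0$ for all $j$, and by using Theorem \ref{p:infinite-extension} to exclude the case $\gamma=+\infty$. Under the standing hypotheses $\nu(dd^c\psi,z_j)=\nu(dd^c(\varphi+\psi),z_j)=2$ and $(\psi-2G_\Omega(\cdot,z_j))(z_j)>-\infty$, one has $p_j=\tfrac{1}{2}\nu(dd^c\psi,z_j)=1$ and $k_j+1=\tfrac{1}{2}\nu(dd^c(\varphi+\psi),z_j)=1$; moreover $\mathcal{I}(2G_\Omega(\cdot,z_j))_{z_j}$ coincides with the maximal ideal $\mathfrak{m}_{z_j}$, so the jet condition $(F-f,z_j)\in(\mathcal{O}(K_\Omega)\otimes\mathcal{I}(2G_\Omega(\cdot,z_j)))_{z_j}$ in Theorem \ref{c:L2-1d-char} is equivalent to $F(z_j)=a_jdw_j$, which is exactly the interpolation condition used to define $C_{\Omega,A}$.

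For the case $\gamma<+\infty$, I would take the holomorphic $(1,0)$-form $f:=a_jdw_j$ on each $V_{z_j}$ as the input to Theorem \ref{c:L2-1d-char}. The asserted $L^2$ bound and the identification of $C_{\Omega,A}$ with the minimal $L^2$ integral in Theorem \ref{c:L2-1d-char} then produce the finite-point equivalence. The four conditions translate directly: condition (1) becomes $\psi=2\sum G_\Omega(\cdot,z_j)$; condition (2) in Theorem \ref{c:L2-1d-char} combined with (1) forces $\varphi=2\log|g|+2u$ with $g(z_j)\neq0$ (the vanishing condition on $g$ being exactly $\mathrm{ord}_{z_j}(g)=\mathrm{ord}_{z_j}(f)=0$); conditions (3) and (4) match verbatim because $f_{z_j}^{k_j+1}=f_{z_j}$. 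This yields the corollary for finite $\gamma$.

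For the case $\gamma=+\infty$, the task is to show that equality is impossible, so that the finiteness of $\gamma$ in condition (1) is forced. I would invoke Theorem \ref{p:infinite-extension} from Appendix \ref{sec:5.3}: it supplies a necessary structural condition for equality in the optimal jets extension in the infinite-points setting, and the goal is to argue that this necessary condition, combined with $\nu(dd^c\psi,z_j)=2$ and $\alpha_j>-\infty$ at every $z_j$ in an infinite discrete set, is incompatible with the concavity collapsing to linearity (or equivalently, with $\psi$ being a genuine negative subharmonic function on $\Omega$ equal to $2\sum_{j}G_\Omega(\cdot,z_j)$, which would require the Green sum over infinitely many poles with unit weights to be locally integrable — a condition that fails near every cluster point). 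Thus equality forces $\gamma<+\infty$ and we reduce to the previous paragraph.

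The main obstacle will be the $\gamma=+\infty$ step: one must extract from Theorem \ref{p:infinite-extension} a clean contradiction with the existence of $\psi$, rather than merely a necessary condition that could in principle be satisfied. The finite case reduces to a routine translation of indices between Theorem \ref{c:L2-1d-char} and the corollary, and the sufficient direction is read off immediately from Remark \ref{rem:1.2} (giving the explicit extremal extension $F$ realising equality).
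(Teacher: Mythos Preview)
Your overall plan is correct and matches the paper's approach: the corollary is obtained exactly by specializing Theorem~\ref{c:L2-1d-char} with $p_j=1$, $k_j=0$ for the case $\gamma<+\infty$, and by invoking Theorem~\ref{p:infinite-extension} to rule out $\gamma=+\infty$. Your translation of the four conditions in the finite case is accurate.

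However, you have misread the content of Theorem~\ref{p:infinite-extension}, and this makes your treatment of the infinite case unnecessarily muddled. That theorem does not give ``a necessary structural condition for equality'': it asserts directly that when $Z_0$ is infinite (and the right-hand sum is finite), there exists an extension $F$ with \emph{strict} inequality
\[
\int_{\Omega}|F|^2e^{-\varphi}c(-\psi)<\left(\int_0^{+\infty}c(s)e^{-s}ds\right)\sum_{j\in\mathbb{Z}_{\ge1}}\frac{2\pi|a_j|^2e^{-\alpha_j}}{(k_j+1)c_{\beta}(z_j)^{2(k_j+1)}}.
\]
Hence $C_{\Omega,A}$ is strictly smaller than the right-hand side, and equality is impossible for $\gamma=+\infty$ immediately. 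There is no ``obstacle'' and no contradiction to extract; you simply cite the theorem. Your remark about ``cluster points'' is also off: $S$ is discrete in $\Omega$, so there are no cluster points inside $\Omega$. The genuine mechanism behind Theorem~\ref{p:infinite-extension} (proved there via Proposition~\ref{p:infinite}) is that linearity of $G(h^{-1}(r))$ forces $\sum_j p_j<+\infty$, while here $p_j=k_j+1\ge 1$ for infinitely many $j$ --- but you do not need to reproduce that argument, only to apply the theorem.
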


Note that $\chi_{z_j}=1$ if and only if $\Omega$  is conformally equivalent to the unit disc less a (possible) closed	set of inner capacity zero (see \cite{suita72}).
When $S$ is a single point set, $e^{-\varphi}c(-\psi)\equiv1$ and $k_1=0$, Corollary \ref{c:::} is the solution of the equality part of Suita conjecture (see \cite{suita72,guan-zhou13ap}). When $S$ is a single point set, Corollary \ref{c:::} can be found in \cite{GMY-concavity2}, whose proof rely on the solution of the (extended) Suita conjecture.

We prove the weighted jets version of Suita conjecture for analytic subsets without using the solution of the (extended) Suita conjecture in this article:
We construct the expression of the minimal extension by the universal covering and Green functions and then perform calculations on open Riemann surfaces (see Section \ref{sec:2}) to establish the sufficiency of the characterization; to prove the necessity part, we repeatedly analyze the concavity of minimal $L^2$ integrals degenerating to linearity, including some properties of the unique minimal extension, the influence of the subharmonicity of $\psi$ and $\varphi+\psi$ on concavity and the interactions between multiple points. 

\section{Some results on open Riemann surfaces}
\label{sec:2}

Let $\Omega$ be an open Riemann surface with a nontrivial Green function $G_{\Omega}$. Let $z_0\in\Omega$.

\begin{Lemma}[see \cite{S-O69}, see also \cite{Tsuji}]
	\label{l:green-sup}Let $w$ be a local coordinate on a neighborhood of $z_0$ satisfying $w(z_0)=0$.  $G_{\Omega}(z,z_0)=\sup_{v\in\Delta(z_0)}v(z)$, where $\Delta(z_0)$ is the subset of  $ SH^-(\Omega)$ such that $v-\log|w|$ is bounded from above near $z_0$.
\end{Lemma}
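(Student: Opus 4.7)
The plan is to establish the equality by proving the two inequalities separately: first verify that $G_{\Omega}(\cdot,z_0)$ itself lies in $\Delta(z_0)$, and then show that every element of $\Delta(z_0)$ is dominated by $G_{\Omega}(\cdot,z_0)$.

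For the ``$\le$'' direction, I would recall that $G_{\Omega}(\cdot,z_0)$ is harmonic on $\Omega\setminus\{z_0\}$, negative, and admits the local expansion $G_{\Omega}(z,z_0)=\log|w(z)|+h(z)$ near $z_0$ with $h$ harmonic. Consequently $G_{\Omega}(\cdot,z_0)-\log|w|$ is locally bounded near $z_0$, and $G_{\Omega}(\cdot,z_0)$ extends as a (negative) subharmonic function across $z_0$. This places $G_{\Omega}(\cdot,z_0)$ in $\Delta(z_0)$ and yields $G_{\Omega}(z,z_0)\le\sup_{v\in\Delta(z_0)}v(z)$.

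For the reverse inequality, I would fix $v\in\Delta(z_0)$, exhaust $\Omega$ by regular relatively compact subdomains $\Omega_n\Subset\Omega_{n+1}\Subset\Omega$ with $z_0\in\Omega_1$, and use the classical fact that the Green functions $G_{\Omega_n}(\cdot,z_0)$ exist and satisfy $G_{\Omega_n}\downarrow G_{\Omega}$ on $\Omega$. Setting $u_n:=v-G_{\Omega_n}(\cdot,z_0)$, the function $u_n$ is subharmonic on $\Omega_n\setminus\{z_0\}$ (since $G_{\Omega_n}(\cdot,z_0)$ is harmonic off $z_0$), and near $z_0$ it is bounded above because $v-\log|w|$ has a locally finite upper bound there and $G_{\Omega_n}(\cdot,z_0)-\log|w|$ extends harmonically; by the removable singularity theorem $u_n$ extends subharmonically to all of $\Omega_n$. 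On $\partial\Omega_n$, $G_{\Omega_n}(\cdot,z_0)\equiv 0$ and $v\le 0$, so $\limsup_{z\to\partial\Omega_n}u_n(z)\le 0$, and the maximum principle on $\Omega_n$ forces $u_n\le 0$, i.e.\ $v\le G_{\Omega_n}(\cdot,z_0)$ on $\Omega_n$. Letting $n\to\infty$ gives $v\le G_{\Omega}(\cdot,z_0)$ pointwise, and taking the supremum over $v\in\Delta(z_0)$ completes the proof.

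The step I expect to be the main obstacle is the boundary-value argument: on a general open Riemann surface there is no global boundary on which to run the maximum principle directly. The standard remedy is precisely the exhaustion by regular subdomains together with the monotone construction of $G_{\Omega}$ as the limit of the $G_{\Omega_n}$, which is exactly the viewpoint of \cite{S-O69,Tsuji}; modulo this device the argument reduces to standard removable singularity and maximum principle statements on each $\Omega_n$.
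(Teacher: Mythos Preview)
Your argument is correct and is precisely the classical proof of the Perron-type characterization of the Green function. The paper itself does not supply a proof of this lemma; it is stated as a known result with references to \cite{S-O69} and \cite{Tsuji}, and those sources establish it by exactly the method you outline (the Green function lies in the defining family, and every competitor is dominated via exhaustion by regular subdomains plus the maximum principle with a removable-singularity step at $z_0$). There is nothing to add.
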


\begin{Lemma}[see \cite{GY-concavity}]\label{l:G-compact}
	For any open neighborhood $U$ of $z_0$, there exists $t>0$ such that $\{G_{\Omega}(z,z_0)<-t\}$ is a relatively compact subset of $U$.
\end{Lemma}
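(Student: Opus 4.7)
The plan is to combine the logarithmic-pole behavior of $G_{\Omega}(\cdot,z_0)$ near $z_0$ with an a priori lower bound for $G_{\Omega}(\cdot,z_0)$ outside a small coordinate disk. After shrinking $U$ if necessary, I may assume $U$ is relatively compact in $\Omega$ and carries a local coordinate $w$ with $w(z_0)=0$; choose nested coordinate disks $V\Subset V'\Subset U$. Near $z_0$ the standard decomposition $G_{\Omega}(z,z_0)=\log|w(z)|+h(z)$ with $h$ harmonic gives $|h|\le C_0$ on $\bar V$ for some finite $C_0$, whence
\[
\{G_{\Omega}(\cdot,z_0)<-t\}\cap V\subset\{|w|<e^{-t+C_0}\},
\]
which is relatively compact in $V$ for $t$ sufficiently large.

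The crucial step is to control $G_{\Omega}(\cdot,z_0)$ on $\Omega\setminus V$. Set $-M:=\min_{\partial V}G_{\Omega}(\cdot,z_0)$, which is finite since $\partial V$ is compact and $G_{\Omega}(\cdot,z_0)$ is continuous there. I would exhaust $\Omega$ by relatively compact open subsets $\Omega_n$ with smooth boundary satisfying $V\Subset\Omega_1\Subset\Omega_2\Subset\cdots$ and $\bigcup_n\Omega_n=\Omega$; each $\Omega_n$ carries its own Green function $G_{\Omega_n}(\cdot,z_0)$, vanishing on $\partial\Omega_n$ and harmonic on $\Omega_n\setminus\{z_0\}$. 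From Lemma \ref{l:green-sup} and the observation that admissible functions in $\Delta(z_0)$ restrict from larger to smaller domains, one obtains the monotone pointwise convergence $G_{\Omega_n}\searrow G_{\Omega}$ on $\Omega$. Applying the minimum principle to the harmonic function $G_{\Omega_n}(\cdot,z_0)$ on the relatively compact region $\Omega_n\setminus\bar V$ (where it equals $0$ on $\partial\Omega_n$ and is nonpositive), we obtain
\[
G_{\Omega_n}(z,z_0)\ge\min_{\partial V}G_{\Omega_n}(\cdot,z_0)\qquad\text{for }z\in\Omega_n\setminus\bar V.
\]
Letting $n\to\infty$ and using Dini's theorem on the compact set $\partial V$ to upgrade the monotone pointwise convergence to uniform convergence gives $G_{\Omega}(z,z_0)\ge-M$ for all $z\in\Omega\setminus\bar V$.

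Combining the two steps: for any $t>M$ sufficiently large, the sublevel set $\{G_{\Omega}(\cdot,z_0)<-t\}$ is disjoint from $\Omega\setminus V$ and sits inside the compact set $\{|w|\le e^{-t+C_0}\}\Subset V\Subset U$, establishing the desired relative compactness.

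The main obstacle is the lower bound $G_{\Omega}(\cdot,z_0)\ge-M$ on $\Omega\setminus V$: since $\Omega$ is non-compact and $G_{\Omega}(\cdot,z_0)$ need not decay uniformly toward the ideal boundary, the naive minimum principle does not apply, and the exhaustion-plus-Dini argument is essential. Everything else is a routine consequence of the logarithmic singular behavior of $G_{\Omega}(\cdot,z_0)$ at its pole.
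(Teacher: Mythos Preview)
Your argument is correct. The paper does not supply its own proof of this lemma (it is cited from \cite{GY-concavity}), so there is no in-paper argument to compare against. A couple of remarks: first, Dini's theorem is more than you need---since $G_{\Omega_n}\ge G_{\Omega}$ pointwise, you already have $\min_{\partial V}G_{\Omega_n}\ge\min_{\partial V}G_{\Omega}=-M$, and passing to the limit in $G_{\Omega_n}(z,z_0)\ge\min_{\partial V}G_{\Omega_n}$ gives the bound immediately. Second, the exhaustion can be bypassed entirely using the extremal characterization you already invoke (Lemma~\ref{l:green-sup}): with $-M=\min_{\partial V}G_{\Omega}(\cdot,z_0)$, the function equal to $G_{\Omega}(\cdot,z_0)$ on $\bar V$ and to $\max(G_{\Omega}(\cdot,z_0),-M)$ on $\Omega\setminus V$ is a negative subharmonic competitor in $\Delta(z_0)$ (the gluing is licit because $-M\le G_{\Omega}$ on $\partial V$), hence is $\le G_{\Omega}(\cdot,z_0)$ everywhere, forcing $G_{\Omega}(\cdot,z_0)\ge-M$ on $\Omega\setminus\bar V$ in one stroke. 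Either route is fine.
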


The following lemma will be used in the proof of Theorem \ref{thm:m-points}.

 \begin{Lemma}[see \cite{GY-concavity}]
	\label{l:cu}
	Let $T$ be a closed positive $(1,1)$ current on $\Omega$. For any open set $U\Subset \Omega$ satisfying $U\cap supp T\not=\emptyset$,  there exists   $\Phi\in SH^-(\Omega)$ satisfying that:
	$(1)$ $i\partial\bar\partial\Phi\leq T$ and $i\partial\bar\partial\Phi\not\equiv0$;
	$(2)$ $\lim_{t\rightarrow0+0}(\inf_{\{G_{\Omega}(z,z_0)\geq-t\}}\Phi(z))=0$;
	$(3)$ $supp (i\partial\bar\partial\Phi)\subset U$ and $\inf_{\Omega\backslash U}\Phi>-\infty$.
\end{Lemma}

Let $Z_0':=\{z_j:1\le j<\gamma \}$ be a discrete subset of $\Omega$, where $\gamma\in\mathbb{Z}_{\geq2}\cup\{+\infty\}$.

\begin{Lemma}
	\label{l:green-sup2} Let $\psi\in SH^-(\Omega)$ such that $\frac{1}{2}\nu(dd^c\psi,z_j)\ge p_j$ for any $j$, where $p_j>0$ is a constant. Then $2\sum_{1\le j< \gamma}p_jG_{\Omega}(\cdot,z_j)\in SH^-(\Omega)$ satisfying that $2\sum_{1\le j<\gamma }p_jG_{\Omega}(\cdot,z_j)\ge\psi$ and $2\sum_{1\le j<\gamma }p_jG_{\Omega}(\cdot,z_j)$ is harmonic on $\Omega\backslash Z_0'$.
\end{Lemma}
\begin{proof}
	It follows from Lemma \ref{l:green-sup} and  Siu's Decomposition Theorem \cite{siu74} that $\psi-2p_1G_{\Omega}(\cdot,z_1)<0\in SH(\Omega)$. Take $\psi_1=\psi-2p_1G_{\Omega}(\cdot,z_1)$, then $\psi_2:=\psi_1-2p_2G_{\Omega}(\cdot,z_2)\in SH^-(\Omega)$. Thus, for any $1\le l<\gamma$, we have $$\psi_l:=\psi-2\sum_{1\le j\le l}p_jG_{\Omega}(\cdot,z_j)\in SH^-(\Omega).$$ 
	 As $\{2\sum_{1\le j\le l}p_jG_{\Omega}(\cdot,z_j)\}$ is decreasing with respect to $l$ and $2\sum_{1\le j\le l}p_jG_{\Omega}(\cdot,z_j)\ge\psi$, we have $2\sum_{1\le j<\gamma }p_jG_{\Omega}(\cdot,z_j)\in SH^-(\Omega)$ and $2\sum_{1\le j<\gamma }p_jG_{\Omega}(\cdot,z_j)\ge\psi$.
	
	Now we prove $2\sum_{1\le j<\gamma }p_jG_{\Omega}(\cdot,z_j)$ is harmonic on $\Omega\backslash Z_0'$. It suffices to prove the case $\gamma=+\infty$. Note that $\sum_{1\le j\le m}G_{\Omega}(\cdot,z_m)$ is harmonic on $\Omega\backslash\{Z'_0\}$ and $\{2\sum_{1\le j\le l}p_jG_{\Omega}(\cdot,z_j)\}$ is decreasing  to $2\sum_{1\le j<\gamma }p_jG_{\Omega}(\cdot,z_j)\ge\psi$. By Harnack's inequality, we know that $2\sum_{1\le j<\gamma }p_jG_{\Omega}(\cdot,z_j)$ is harmonic on $\Omega\backslash Z_0'$.
\end{proof}

\begin{Lemma}
	\label{l:psi=G}
 Let $\psi$ be as in Lemma \ref{l:green-sup2}. Let $l(t)$ is a positive Lebesgue measurable function on $(0,+\infty)$ satisfying $l$ is decreasing and $\int_0^{+\infty}l(t)dt<+\infty$. If $\psi\not\equiv 2\sum_{1\le j<\gamma}p_jG_{\Omega}(\cdot,z_j)$,  there exists a Lebesgue measurable subset $V$ of $\Omega$, such that  $$l(-\psi(z))<l(-2\sum_{1\le j<\gamma}p_jG_{\Omega}(z,z_j))$$ for any  $z\in V$ and $\mu(V)>0$, where $\mu$ is the Lebesgue measure on $\Omega$.
\end{Lemma}

\begin{proof} Taking $U_0\Subset\Omega\backslash\{z_j:2\le j<\gamma \}$ be a neighborhood of $z_1$,
	it follows from Lemma \ref{l:green-sup2} and Lemma \ref{l:G-compact} that there exists $t_0>0$ such that $\{z\in U_0:2\sum_{1\le j<\gamma }p_jG_{\Omega}(z,z_j)<-t_0\}\subset\subset U_0$. As $l$ is decreasing and $\int_0^{+\infty}l(t)dt<+\infty$, then there exists $t_1>t_0$ such that $l(t)<l(t_1)$ holds for any $t>t_1.$
	
	Note that $\psi-2\sum_{1\le j<\gamma }p_jG_{\Omega}(\cdot,z_j)\in SH^-(\Omega)$. As $\psi$ is upper semicontinuous,  $\sup_{z\in\{2\sum_{1\le j<\gamma }p_jG_{\Omega}(\cdot,z_j)\leq-t_1\}\cap U_0}\psi(z)<-t_1.$ Thus there exists  $t_2\in(t_0,t_1)$ such that $$-t_3:=\sup_{z\in\{2\sum_{1\le j<\gamma }p_jG_{\Omega}(\cdot,z_j)\leq-t_2\}\cap U_0}\psi(z)<-t_1.$$  Let $V=\{z\in\Omega:-t_1<2\sum_{1\le j<\gamma }p_jG_{\Omega}(z,z_j)<-t_2\}\cap U_0$, then $\mu(V)>0$. As $l(t)$ is decreasing on $(0,+\infty)$, for any $z\in V$, we have
	$l(-\psi(z))\leq l(t_3)<l(t_1)\leq l(-2\sum_{1\le j<\gamma }p_jG_{\Omega}(\cdot,z_j)).$
	Thus, Lemma \ref{l:psi=G} holds.
\end{proof}

We will construct a sequence of subsurfaces $\{\Omega_l\}$ of $\Omega$ in the following lemma.
\begin{Lemma}
	\label{l:1}There exists a sequence of open Riemann surfaces $\{\Omega_l\}_{l\in\mathbb{Z}^+}$ such that $$z_0\in\Omega_l\Subset\Omega_{l+1}\Subset\Omega,\,\,\,\, \cup_{l\in\mathbb{Z}^+}\Omega_l=\Omega,$$ 
	$\Omega_l$ has a smooth boundary $\partial\Omega_l$ in $\Omega$  and $e^{G_{\Omega_l}(\cdot,z_0)}$ can be smoothly extended to a neighborhood of $\overline{\Omega_l}$ for any $l\in\mathbb{Z}^+$, where $G_{\Omega_l}$ is the Green function of $\Omega_l$. Moreover, $\{{G_{\Omega_l}}(\cdot,z_0)-G_{\Omega}(\cdot,z_0)\}$ is decreasingly convergent to $0$ on $\Omega$.
\end{Lemma}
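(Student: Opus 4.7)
The plan is to build $\{\Omega_l\}$ as sublevel sets of a real analytic strictly subharmonic exhaustion function and then verify the three required properties separately. Since every open Riemann surface is Stein, I would apply Lemma \ref{l:0} to obtain a proper holomorphic embedding $v:\Omega\to\mathbb{C}^{3}$, and set $\rho:=|v|^{2}$, which is a real analytic, strictly subharmonic exhaustion of $\Omega$. By Sard's theorem, almost every value of $\rho$ is regular, so I can choose an increasing sequence of regular values $c_{l}\nearrow +\infty$ with $\rho(z_{0})<c_{1}$ and define $\Omega_{l}:=\{\rho<c_{l}\}$. Then $z_{0}\in\Omega_{l}\Subset\Omega_{l+1}\Subset\Omega$, $\bigcup_{l}\Omega_{l}=\Omega$, and each $\partial\Omega_{l}=\{\rho=c_{l}\}$ is a real analytic smooth curve.

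Next I would establish that $e^{G_{\Omega_{l}}(\cdot,z_{0})}$ admits a smooth extension to a neighborhood of $\overline{\Omega_{l}}$. On the relatively compact smoothly bounded domain $\Omega_{l}$, the classical Perron construction produces $G_{\Omega_{l}}(\cdot,z_{0})$, harmonic on $\Omega_{l}\setminus\{z_{0}\}$ with the prescribed logarithmic pole at $z_{0}$, and continuously vanishing on $\partial\Omega_{l}$. Since $\partial\Omega_{l}$ is a real analytic arc, Schwarz reflection across $\partial\Omega_{l}$ (applied in a boundary neighborhood, which is away from $z_{0}$) extends $G_{\Omega_{l}}(\cdot,z_{0})$ to a real analytic harmonic function in a full neighborhood of $\overline{\Omega_{l}}\setminus\{z_{0}\}$. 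Exponentiating yields a real analytic, in particular smooth, positive extension of $e^{G_{\Omega_{l}}(\cdot,z_{0})}$ to a neighborhood of $\overline{\Omega_{l}}$, with value $1$ on $\partial\Omega_{l}$.

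For the monotone convergence I would argue via the variational characterization in Lemma \ref{l:green-sup}. The restriction of $G_{\Omega}(\cdot,z_{0})$ to $\Omega_{l}$ is a negative subharmonic function on $\Omega_{l}$ with the correct singularity, hence an admissible candidate in the sup defining $G_{\Omega_{l}}(\cdot,z_{0})$, giving $G_{\Omega}(\cdot,z_{0})\le G_{\Omega_{l}}(\cdot,z_{0})$ on $\Omega_{l}$; the same argument applied to $\Omega_{l}\subset\Omega_{l+1}$ yields $G_{\Omega_{l+1}}\le G_{\Omega_{l}}$ on $\Omega_{l}$. Thus the nonnegative sequence $G_{\Omega_{l}}(\cdot,z_{0})-G_{\Omega}(\cdot,z_{0})$ is decreasing in $l$. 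The decreasing limit $u:=\lim_{l}G_{\Omega_{l}}(\cdot,z_{0})$ is harmonic on $\Omega\setminus\{z_{0}\}$ by Harnack's principle (it stays $\ge G_{\Omega}>-\infty$ so the limit is not identically $-\infty$ on any component), has the same logarithmic pole at $z_{0}$, and is negative, so $u$ itself belongs to the class over which Lemma \ref{l:green-sup} takes the supremum. Maximality of $G_{\Omega}(\cdot,z_{0})$ then forces $u\le G_{\Omega}(\cdot,z_{0})$, and combined with $u\ge G_{\Omega}(\cdot,z_{0})$ gives $u=G_{\Omega}(\cdot,z_{0})$, which is the claimed convergence to $0$ of the differences.

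The main obstacle is the boundary-extension step: one must ensure that $e^{G_{\Omega_{l}}(\cdot,z_{0})}$ really extends \emph{across} $\partial\Omega_{l}$, not merely up to it. This is why I choose $\rho=|v|^{2}$ coming from the holomorphic embedding in Lemma \ref{l:0} — it is real analytic, so regular level sets are real analytic arcs, and Schwarz reflection applies. A generic smooth exhaustion would give only $C^{\infty}$ boundary regularity of the Green function (via Kellogg-type estimates) and would require a separate Seeley-type extension argument for $e^{G_{\Omega_{l}}(\cdot,z_{0})}$, an extra layer of technicality that the real analytic choice avoids cleanly.
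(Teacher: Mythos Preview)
Your proof is correct; the exhaustion construction and the convergence argument via Lemma~\ref{l:green-sup} match the paper's. The difference is in the smooth-extension step. The paper avoids Schwarz reflection entirely by a two-stage construction: it first takes intermediate domains $\Omega'_l = \{\Psi < t_l\}$ with $\Psi = v^*(\log|z - v(z_0)|)$, verifies (via the comparison $G_{\Omega'_l}\ge \Psi-t_l$) that $G_{\Omega'_l}(\cdot,z_0)$ extends continuously to $\overline{\Omega'_l}$, and then defines $\Omega_l := \{G_{\Omega'_l}(\cdot, z_0) < -s_l\}$ as a regular sublevel set of the Green function on $\Omega'_l$, with $s_l$ small enough that $\Omega_l\Supset\Omega'_{l-1}$. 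The payoff is the identity $G_{\Omega_l}(\cdot, z_0) = G_{\Omega'_l}(\cdot, z_0) + s_l$, so the desired smooth extension of $e^{G_{\Omega_l}}$ across $\partial\Omega_l$ is simply $e^{s_l} e^{G_{\Omega'_l}}$, already defined and smooth on $\Omega'_l \supset \overline{\Omega_l}$. Your one-step route is tidier in outline and gives real analytic boundaries as a bonus, but it imports the reflection principle across real analytic arcs (local conformal straightening plus classical Schwarz reflection); the paper's sublevel-set trick is more self-contained, needing only Sard's theorem and the uniqueness of the Green function, and in particular never has to argue boundary regularity of $G_{\Omega_l}$ from potential theory.
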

\begin{proof}
	It follows from the embedding theorem of Stein manifolds (see \cite{hormander}) that there exists an element $v\in\mathcal{O}(\Omega)^{3}$ which defines a one-to-one regular proper map from $\Omega$ into $\mathbb{C}^{3}$. Denote that $\Psi:=v^*(\log|z-v(z_0)|)$. Note that ${\Psi}$ is smooth on $\Omega\backslash\{z_0\}$ and $\{\Psi<t\}\Subset\Omega$ for any $t\in\mathbb{R}$. Using Sard's Theorem, there exists a sequence of increasing numbers $\{t_l\}_{l\in\mathbb{Z}^+}$, such that $\lim_{l\rightarrow+\infty}t_l=+\infty$ and $\Omega'_l:=\{\Psi<t_l\}$ has a smooth boundary in $\Omega$. Then  $z_0\in\Omega'_l\Subset\Omega'_{l+1}\Subset\Omega$ and $\cup_{l\in\mathbb{Z}^+}\Omega'_l=\Omega$.
	
	Note that $e^{G_{\Omega'_l}(\cdot,z_0)}$ is smooth on $\Omega'_l$. Lemma \ref{l:green-sup} shows that $G_{\Omega'_l}(\cdot,z_0)\ge e^{\Psi-t_l}$. As $\lim_{z\rightarrow p}(\Psi(z)-t_l)=0$ for any $p\in\partial\Omega'_l$ and $G_{\Omega'_l}(\cdot,z_0)<0$ on $\Omega$, then $\lim_{z\rightarrow p}G_{\Omega'_l}(z,z_0)=0$ on $\Omega'_l$ for any $p\in\partial{\Omega'_l}$, which implies that $e^{G_{\Omega'_l}(\cdot,z_0)}$ can be continuously extended to $\overline{\Omega'_l}$. There exists $s_1>0$ such that $\Omega_1:=\{z\in\Omega'_1:G_{\Omega'_1}(z,z_0)<-s_1\}$ and  $\partial\Omega_1$ is smooth, and  there exists $s_l>0$ such that $\Omega_l:=\{z\in\Omega'_1:G_{\Omega'_l}(z,z_0)<-s_l\}\Supset\Omega'_{l-1}$ and  $\partial\Omega_l$ is smooth for any $l\in\{2,3,4,\ldots\}$. Note that $G_{\Omega_l}(\cdot,z_0)=G_{\Omega'_l}(\cdot,z_0)+s_l$ for any $l\in\mathbb{Z}^+$. Thus, we have $z_0\in\Omega_l\Subset\Omega_{l+1}\Subset\Omega$, $\cup_{l\in\mathbb{Z}^+}\Omega_l=\Omega$, $\Omega_l$ has a smooth boundary $\partial\Omega_l$ in $\Omega$  and $e^{G_{\Omega_l}(\cdot,z_0)}$ can be smoothly extended to a neighborhood of $\overline{\Omega_l}$ for any $l\in\mathbb{Z}^+$.
	
	It follows from Lemma \ref{l:green-sup} that $G_{\Omega_l}(z,z_0)\ge G_{\Omega_{l+1}}(z,z_0)\ge G_{\Omega}(z,z_0)$ for any $z\in\Omega_l$ and $l\in\mathbb{Z}^+$, which implies that $G_{\Omega}(\cdot,z_0)\le \lim_{l\rightarrow+\infty}G_{\Omega_l}(\cdot,z_0)=:\tilde{G} \in SH^-(\Omega)$. Following from Lemma \ref{l:green-sup}, we have $\tilde{G}=G_{\Omega}(\cdot,z_0)$, which implies that $\{G_{\Omega_l}(\cdot,z_0)-G_{\Omega}(\cdot,z_0)\}$ is decreasingly convergent to $0$ on $\Omega$ since $G_{\Omega_l}(\cdot,z_0)-G_{\Omega}(\cdot,z_0)$ is harmonic on $\Omega_l$ for any $l$.	
	\end{proof}

Let $\Omega_l$ be the open Riemann surface in Lemma \ref{l:1}. We recall a well-known property of $G_{\Omega_l}$. For convenience of readers, we give a proof.
\begin{Lemma}\label{l:di}
Let $h\in C^2(\overline{\Omega_l})\cap  SH(\Omega)$. Then we have
\begin{equation}
	\label{eq:211019a}
	\int_{\partial \Omega_l}hd^cG_{\Omega_l}(\cdot,z_0)\ge\ h(z_0),
\end{equation}
where $d^c=\frac{\partial-\overline{\partial}}{2\sqrt{-1}\pi}$. When $h$ is harmonic, inequality \eqref{eq:211019a} becomes an equality.
\end{Lemma}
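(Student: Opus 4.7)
The plan is to use Stokes' theorem on the punctured region $\Omega_l\setminus B_\epsilon(z_0)$, where $B_\epsilon(z_0)$ is a small coordinate disc around $z_0$, and then let $\epsilon\to0$. The starting point is the symmetric Green-type identity
$$d\bigl(h\, d^c G_{\Omega_l}-G_{\Omega_l}\, d^c h\bigr)=h\, dd^cG_{\Omega_l}-G_{\Omega_l}\, dd^c h,$$
which holds for real-valued functions because the cross terms $dh\wedge d^cG_{\Omega_l}$ and $dG_{\Omega_l}\wedge d^ch$ agree (both reduce to $\tfrac{1}{2\pi}\langle\nabla h,\nabla G_{\Omega_l}\rangle\, dx\wedge dy$ in isothermal coordinates). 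Since $G_{\Omega_l}(\cdot,z_0)$ is harmonic on $\Omega_l\setminus\{z_0\}$, the first term on the right vanishes there, leaving only $-G_{\Omega_l}\, dd^c h$.

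First I would integrate this identity over $\Omega_l\setminus B_\epsilon(z_0)$. The boundary consists of $\partial\Omega_l$, on which $G_{\Omega_l}(\cdot,z_0)=0$ so only the $h\, d^c G_{\Omega_l}$ piece survives, and $-\partial B_\epsilon(z_0)$ with reversed orientation. Stokes therefore gives
$$\int_{\partial\Omega_l}h\, d^c G_{\Omega_l}(\cdot,z_0)-\int_{\partial B_\epsilon(z_0)}\bigl(h\, d^c G_{\Omega_l}(\cdot,z_0)-G_{\Omega_l}(\cdot,z_0)\, d^c h\bigr)=-\int_{\Omega_l\setminus B_\epsilon(z_0)}G_{\Omega_l}(\cdot,z_0)\, dd^c h.$$

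Next I would take $\epsilon\to 0$ and analyze the three terms. Using the local expansion $G_{\Omega_l}(\cdot,z_0)=\log|w|+O(1)$ near $z_0$ and the normalization $\int_{\partial B_\epsilon}d^c G_{\Omega_l}(\cdot,z_0)\to 1$, continuity of $h$ gives $\int_{\partial B_\epsilon}h\, d^c G_{\Omega_l}(\cdot,z_0)\to h(z_0)$. The other boundary term is $O(\epsilon\log\epsilon)\to 0$ because $G_{\Omega_l}(\cdot,z_0)\sim\log|w|$ while $d^ch$ is bounded on $\overline{\Omega_l}$ by the $C^2$ hypothesis. The volume term converges to $-\int_{\Omega_l}G_{\Omega_l}(\cdot,z_0)\, dd^c h$, which is nonnegative since $G_{\Omega_l}(\cdot,z_0)\le 0$ on $\Omega_l$ and $dd^ch\ge 0$ by subharmonicity of $h$. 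Combining these yields the inequality \eqref{eq:211019a}. When $h$ is harmonic the volume integral vanishes identically and the inequality collapses to equality.

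The main obstacle is really just bookkeeping around the logarithmic pole at $z_0$: one has to verify that the $\log$-singularity of $G_{\Omega_l}(\cdot,z_0)$ together with the $C^2$ regularity of $h$ gives exactly the limit $h(z_0)$ from one boundary term while making the other one vanish, and one needs the smoothness of $\partial\Omega_l$ (guaranteed by Lemma \ref{l:1}) to justify Stokes' theorem up to the outer boundary.
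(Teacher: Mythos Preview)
Your argument is correct and follows essentially the same route as the paper's proof: Stokes on $\Omega_l\setminus B_r(z_0)$, the vanishing of $G_{\Omega_l}(\cdot,z_0)$ on $\partial\Omega_l$, the limit analysis of the inner boundary terms using $G_{\Omega_l}(\cdot,z_0)=\log|w|+O(1)$, and the nonnegativity of $-\int_{\Omega_l}G_{\Omega_l}(\cdot,z_0)\,dd^ch$. The only cosmetic difference is that the paper writes the two pieces $d(h\,d^cG_{\Omega_l})$ and $d(G_{\Omega_l}\,d^ch)$ separately and invokes the sub-mean-value inequality for $h$ on $\partial B_r$ (using the special coordinate $|\tilde w|=e^{G_{\Omega_l}}$), whereas you package them into the single Green identity and pass to the limit via continuity of $h$; both lead to the same conclusion.
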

\begin{proof}
	Let $\tilde{w}$ be a local coordinate on a neighborhood $\tilde{V}_{z_0}\Subset\Omega_l$ of $z_0$ satisfying $\tilde{w}(z_0)=0$ and $|\tilde{w}|=e^{G_{\Omega_l}(\cdot,z_0)}$ on $\tilde{V}_{z_0}$. It follows from Stokes' theorem that
	\begin{equation}
		\label{eq:211020a}
		\int_{\partial \Omega_l}hd^cG_{\Omega_l}(\cdot,z_0)=\int_{\Omega_l\backslash\overline{B_r}}d(hd^cG_{\Omega_l}(\cdot,z_0))+\int_{\partial B_r}hd^cG_{\Omega_l}(\cdot,z_0),
	\end{equation}
	where $B_r=\{z\in\tilde{V}_{z_0}:|\tilde{w}(z)|<r\}\Subset\tilde{V}_{z_0}$.
Note that $d^cG_{\Omega_l}(\cdot,z_0)=\frac{\partial-\overline{\partial}}{2\sqrt{-1}\pi}\log|\tilde{w}|=\frac{\overline{\tilde{w}}d\tilde{w}-\tilde{w}\overline{d\tilde{w}}}{4\sqrt{-1}\pi r^2}$ on $\partial B_r$. Let $\tilde{w}=re^{\sqrt{-1}\theta}$, where $\theta\in[0,2\pi)$, then we have
\begin{equation}
	\label{eq:211020d}
	\int_{\partial B_r}hd^cG_{\Omega_l}(\cdot,z_0)=\int_{\partial B_r}h\frac{\overline{\tilde{w}}d\tilde{w}-\tilde{w}\overline{d\tilde{w}}}{4\sqrt{-1}\pi r^2}
	=\frac{1}{2\pi}\int_{0}^{2\pi}h(\tilde{w}^{-1}(re^{\sqrt{-1}\theta}))d\theta.
\end{equation}
As $h$ is subharmonic, then inequality \eqref{eq:211020d} implies that
\begin{equation}
	\label{eq:211020e}\int_{\partial B_r}hd^cG_{\Omega_l}(\cdot,z_0)\ge h(z_0)
\end{equation}
for  $r$ is small enough.
It follows from Stokes' theorem that
	\begin{equation}
		\nonumber
		\lim_{r\rightarrow0}\int_{\partial{B_r}}G_{\Omega_l}(\cdot,z_0)d^ch=		\lim_{r\rightarrow0}\log r\int_{B_r}dd^ch=0,
	\end{equation}
	which implies that
		$$\lim_{r\rightarrow0}\int_{\Omega_l\backslash\overline{B_r}}d(G_{\Omega_l}(\cdot,z_0)d^ch)	
		=\int_{\partial\Omega_l}G_{\Omega_l}(\cdot,z_0)d^ch-\lim_{r\rightarrow0}\int_{\partial{B_r}}G_{\Omega_l}(\cdot,z_0)d^ch
		=0.$$
	Note that   $G_{\Omega_l}(\cdot,z_0)$ is harmonic on $\Omega_l\backslash\overline{B_r}$ and $d(G_{\Omega_l}(\cdot,z_0)d^ch)=(dG_{\Omega_l}(\cdot,z_0))\wedge(d^ch)+G_{\Omega_l}(\cdot,z_0)dd^ch=(dh)\wedge(d^cG_{\Omega_l}(\cdot,z_0))+G_{\Omega_l}(\cdot,z_0)dd^ch$. Then
	\begin{equation}
		\nonumber \begin{split}
			\lim_{r\rightarrow0}\int_{\Omega_l\backslash\overline{B_r}}d(hd^cG_{\Omega_l}(\cdot,z_0))
			=&\lim_{r\rightarrow0}\int_{\Omega_l\backslash\overline{B_r}}(dh)\wedge(d^cG_{\Omega_l}(\cdot,z_0))\\
			=&\lim_{r\rightarrow0}\int_{\Omega_l\backslash\overline{B_r}}d(G_{\Omega_l}(\cdot,z_0)d^ch)-G_{\Omega_l}(\cdot,z_0)dd^ch\\
			=&\lim_{r\rightarrow0}\int_{\Omega_l\backslash\overline{B_r}}-G_{\Omega_l}(\cdot,z_0)dd^ch.
		\end{split}
	\end{equation}
	As $G_{\Omega_l}(\cdot,z_0)<0$ on $\Omega_l$ and $h$ is subharmonic on $\Omega_l$, we get
	\begin{equation}
		\label{eq:211020f}\lim_{r\rightarrow0}\int_{\Omega_l\backslash\overline{B_r}}d(hd^cG_{\Omega_l}(\cdot,z_0))\ge0.
	\end{equation}	
	Combining equality \eqref{eq:211020a}, inequality \eqref{eq:211020e} and \eqref{eq:211020f}, we have
	\begin{displaymath}
		\begin{split}
		\int_{\partial \Omega_l}h(d^cG_{\Omega_l})&=\lim_{r\rightarrow0}\Bigg(\int_{\Omega_l\backslash\overline{B_r}}d(hd^cG_{\Omega_l}(\cdot,z_0))+\int_{\partial B_r}hd^cG_{\Omega_l}\Bigg)
		\\&\ge h(z_0),
		\end{split}
	\end{displaymath}
	which is inequality \eqref{eq:211019a}. When $h$ is harmonic,  inequality \eqref{eq:211020e} and inequality \eqref{eq:211020f} become equalities, which implies that inequality \eqref{eq:211019a} becomes equality. 
	\end{proof}

Without loss of generality,  assume that $\{z_1,z_2,\ldots,z_m\}\subset\Omega_l$ for any $l\in\mathbb{Z}^+$. Let $p_j>2$ be a real number for any $j\in\{1,2,\ldots,m\}$.
Denote that $$\mathcal{G}:=2\sum_{1\le j\le m}p_jG_{\Omega}(\cdot,z_j)\,\text{on $\Omega$, }\,\,\,\, \mathcal{G}_l:=2\sum_{1\le j\le m}p_jG_{\Omega_l}(\cdot,z_j)\,\text{on $\Omega_l$.}$$  

\begin{Lemma}
	\label{l:2}
	There exists a subsequence  $\{e^{\mathcal{G}_{l_n}}\}_{n\in\mathbb{Z}^+}$ of $\{e^{\mathcal{G}_l}\}_{l\in\mathbb{Z}^+}$ such that $\{e^{\mathcal{G}_{l_n}}\}$, $\{de^{\mathcal{G}_{l_n}}\}$ and $\{\partial\overline{\partial}e^{\mathcal{G}_{l_n}}\}$ are uniformly convergent to $e^\mathcal{G}$, $de^\mathcal{G}$ and $\partial\overline{\partial}e^\mathcal{G}$ on any compact subset of $\Omega$, respectively.
\end{Lemma}
\begin{proof}
Using the diagonal method, it suffices to prove this lemma locally. Choosing any $p\in\Omega$,  there exists a neighborhood $U\Subset\Omega$ of $p$, such that $U$ is conformally equivalent to the unit disc and $U\cap\{z_1,z_2,\ldots,z_m\}$ has at most one point.
	
If $U\cap\{z_1,z_2,\ldots,z_m\}=\emptyset$,  then there exists  $f_l\in\mathcal{O}(U)$ such that $|f_l|^2=e^{\mathcal{G}_{l}}$ for any $l$ and $f_0\in\mathcal{O}(U)$ such that $|f_0|^2=e^{\mathcal{G}}$.
Lemma \ref{l:1} shows that $\{|f_l|\}_{l\in\mathbb{Z}^+}$ is decreasingly convergent to $|f_0|$, which implies that there exists a subsequence of $\{f_{l_n}\}_{n\in\mathbb{Z}^+}$ of $\{f_l\}_{l\in\mathbb{Z}^+}$. Then $\{f_{l_n}\}$ and $\{df_{l_n}\}$ are uniformly convergent to $f_0$ and  $df_0$ on any compact subset of $U$, respectively.
Note that $de^{\mathcal{G}_{l_n}}=d(f_{l_n}\overline{f_{l_n}})=(\partial f_{l_n})f_{l_n}+f_l\overline{\partial f_{l_n}}$ and $\partial\overline{\partial}e^{\mathcal{G}_{l_n}}=\partial f_{l_n}\wedge\overline{\partial f_{l_n}}$. Then we have $\{e^{\mathcal{G}_{l_n}}\}$, and $\{\partial\overline{\partial}e^{\mathcal{G}_{l_n}}\}$  are uniformly convergent to $e^\mathcal{G}$, $de^\mathcal{G}$ and $\partial\overline{\partial}e^\mathcal{G}$ on any compact subset of $U$, respectively.
	
If $U\cap\{z_1,z_2,\ldots,z_m\}=\{z_{j_0}\}$ (without loss of generality, assume that $j_0=1$),  then there exists  $f_l\in\mathcal{O}(U)$ such that $|f_l|^2=e^{\frac{\mathcal{G}_{l}}{p_1}}$ for any $l$ and  $f_0\in\mathcal{O}(U)$ such that $|f_0|^2=e^{\frac{\mathcal{G}}{p_1}}$.
Lemma \ref{l:1} shows that $\{|f_l|\}_{l\in\mathbb{Z}^+}$ is decreasingly convergent to $|f_0|$. Then there exists a subsequence of $\{f_{l_n}\}_{n\in\mathbb{Z}^+}$ of $\{f_l\}_{l\in\mathbb{Z}^+}$, which  satisfies that $\{f_{l_n}\}$ and $\{df_{l_n}\}$ are uniformly convergent to $f_0$  and $df_0$ on any compact subset of $U$, respectively.
As $p_1>2$ and $e^{\mathcal{G}_{l_n}}=|f_{l_n}|^{2p_1}$, by a direct calculation, we have
	$$de^{\mathcal{G}_{l_n}}=d(|f_{l_n}|^{2p_1})=p_1|f_{l_n}|^{2(p_1-1)}(\overline{f_{l_n}}\partial f_{l_n}+f_l\overline{\partial f_{l_n}})$$ and
	 $$\partial\overline{\partial}e^{\mathcal{G}_{l_n}}=\partial\overline{\partial}{|f_{l_n}|^{2p_1}}=p_1^2|f_{l_n}|^{2(p_1-1)}\partial f_{l_n}\wedge\overline{\partial f_{l_n}}.$$ Then $\{e^{\mathcal{G}_{l_n}}\}$,  $\{de^{\mathcal{G}_{l_n}}\}$ and $\{\partial\overline{\partial}e^{\mathcal{G}_{l_n}}\}$ are uniformly convergent to $e^\mathcal{G}$, $de^\mathcal{G}$ and $\partial\overline{\partial}e^\mathcal{G}$ on any compact subset of $U$, respectively.
		
	Thus, Lemma \ref{l:2} holds.
\end{proof}

In the following, let $\{\Omega_l\}$ and $\{e^{\mathcal{G}_{l}}\}$ be as the $\{\Omega_{l_n}\}$ and $\{e^{\mathcal{G}_{l_n}}\}$ in Lemma \ref{l:2}, respectively. 
Let us calculate the integral $\sqrt{-1}\int_{\Omega}\partial\overline\partial e^{\mathcal{G}}$.
 \begin{Lemma}
	\label{l:4}
	$\sqrt{-1}\int_{\Omega}\partial\overline\partial e^{\mathcal{G}}=2\pi\sum_{1\le j\le m}p_j$, where $\mathcal{G}=2\sum_{1\le j\le m}p_jG_{\Omega}(\cdot,z_j)$.
\end{Lemma}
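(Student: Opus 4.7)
The plan is to use the exhaustion $\{\Omega_l\}$ from Lemma \ref{l:1}, compute $\sqrt{-1}\int_{\Omega_l}\partial\bar\partial e^{G_l}=2\pi\sum_jp_j$ exactly via Stokes' theorem and Lemma \ref{l:di}, and then pass to the limit in two different ways to obtain matching upper and lower bounds on $\sqrt{-1}\int_\Omega\partial\bar\partial e^G$.

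For the exact value on $\Omega_l$, note that $e^{G_l}$ is $C^2$ on $\overline{\Omega_l}$ (since $p_j>2$) and equals $1$ on $\partial\Omega_l$ (as $G_{\Omega_l}(\cdot,z_j)$ vanishes there). Using $\sqrt{-1}\partial\bar\partial=\pi dd^c$, Stokes' theorem together with Lemma \ref{l:di} applied to $h\equiv 1$ gives
\[
\sqrt{-1}\int_{\Omega_l}\partial\bar\partial e^{G_l}=\pi\int_{\partial\Omega_l}e^{G_l}d^cG_l=\pi\int_{\partial\Omega_l}d^cG_l=2\pi\sum_jp_j\int_{\partial\Omega_l}d^cG_{\Omega_l}(\cdot,z_j)=2\pi\sum_jp_j.
\]
For the upper bound on $\sqrt{-1}\int_\Omega\partial\bar\partial e^G$, since $\sqrt{-1}\partial\bar\partial e^G$ is a positive $(1,1)$-form with continuous density, we have $\sqrt{-1}\int_\Omega\partial\bar\partial e^G=\sup_K\sqrt{-1}\int_K\partial\bar\partial e^G$ over compact $K\subset\Omega$. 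Any such $K$ is eventually contained in $\Omega_l$, and Lemma \ref{l:2} supplies a subsequence $\{l_n\}$ along which $\partial\bar\partial e^{G_{l_n}}\to\partial\bar\partial e^G$ uniformly on $K$; combined with the exact identity above, $\sqrt{-1}\int_K\partial\bar\partial e^G\le 2\pi\sum_jp_j$, hence $\sqrt{-1}\int_\Omega\partial\bar\partial e^G\le 2\pi\sum_jp_j$.

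For the reverse inequality, apply Stokes directly to $e^G$ on $\Omega_l$. Set $v_l:=G_l-G$: this is nonnegative and harmonic on $\Omega_l$ (the log singularities at each $z_j$ cancel), extends smoothly up to $\partial\Omega_l$ (Dirichlet boundary regularity together with the smooth extension of $e^{G_{\Omega_l}(\cdot,z_j)}$ from Lemma \ref{l:1}), and satisfies $v_l(z_j)\to 0$ by $G_{\Omega_l}(\cdot,z_k)\to G_\Omega(\cdot,z_k)$. Since $G=-v_l$ and $d^cG=d^cG_l-d^cv_l$ on $\partial\Omega_l$,
\[
\sqrt{-1}\int_{\Omega_l}\partial\bar\partial e^G=\pi\int_{\partial\Omega_l}e^{-v_l}d^cG_l-\pi\int_{\partial\Omega_l}e^{-v_l}d^cv_l.
\]
An interior Stokes application using $dd^cv_l=0$ rewrites the second integral as $-\pi\int_{\Omega_l}e^{-v_l}dv_l\wedge d^cv_l\le 0$, so it contributes with the favorable sign. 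For the first, $e^{-v_l}$ is subharmonic (since $\Delta e^{-v_l}=e^{-v_l}|\nabla v_l|^2\ge 0$), and Lemma \ref{l:di} yields
\[
\pi\int_{\partial\Omega_l}e^{-v_l}d^cG_l=2\pi\sum_jp_j\int_{\partial\Omega_l}e^{-v_l}d^cG_{\Omega_l}(\cdot,z_j)\ge 2\pi\sum_jp_je^{-v_l}(z_j)\longrightarrow 2\pi\sum_jp_j.
\]
Hence $\liminf_l\sqrt{-1}\int_{\Omega_l}\partial\bar\partial e^G\ge 2\pi\sum_jp_j$, and since the integrand is nonnegative with $\Omega_l\nearrow\Omega$, monotone convergence yields $\sqrt{-1}\int_\Omega\partial\bar\partial e^G\ge 2\pi\sum_jp_j$, completing the equality.

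The main technical nuisance will be justifying the Stokes computations cleanly at the singularities and at the boundary: in particular, verifying that $\sqrt{-1}\partial\bar\partial e^G$ carries no hidden Dirac contribution at $z_j$ (true because $e^G$ vanishes like $|w_j|^{2p_j}$ with $p_j>2$, killing the $\delta_{z_j}$ in the current $\partial\bar\partial G$), and checking that $v_l$ really is $C^2$ up to $\partial\Omega_l$ so that Lemma \ref{l:di} applies with $h=e^{-v_l}$.
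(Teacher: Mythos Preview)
Your proof is correct and follows essentially the same three-step strategy as the paper: the exact computation of $\sqrt{-1}\int_{\Omega_l}\partial\bar\partial e^{G_l}$ via Stokes and Lemma~\ref{l:di}, the upper bound via the compact convergence of Lemma~\ref{l:2} (the paper phrases this as Fatou's lemma, but it is the same idea), and the lower bound via Stokes on $\Omega_l$ applied to $e^G$ with the subharmonic function $e^{-v_l}=e^{G-G_l}$. The only cosmetic difference is that the paper subtracts $e^{G_l}$ first and analyzes $\int_{\Omega_l}dd^c(e^G-e^{G_l})$, whereas you work directly with $\int_{\Omega_l}dd^ce^G$; after unraveling, the two boundary decompositions coincide term by term.
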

\begin{proof}
Note that $\sqrt{-1}\partial\overline{\partial}=\pi dd^c$ and $\mathcal{G}_l=0$ on $\partial\Omega_l$. Using Stokes's theorem and Lemma \ref{l:di}, we have
\begin{equation}
	\label{eq:241201a}
	\begin{split}
		\sqrt{-1}\int_{\Omega_l}\partial\overline{\partial}e^{\mathcal{G}_l}&=\pi\int_{\Omega_l}dd^ce^{\mathcal{G}_l}=\pi\int_{\partial\Omega_l}d^ce^{\mathcal{G}_l}=\pi\int_{\partial\Omega_l}e^{\mathcal{G}_l}(d^c\mathcal{G}_l)\\
		&=2\pi\sum_{1\le j\le m}p_j\int_{\partial\Omega_l}d^cG_{\Omega_l}(\cdot,z_j)
		=2\pi\sum_{1\le j\le m}p_j
	\end{split}
\end{equation}
and 
\begin{equation}
	\label{eq:211020h}\begin{split}
	\int_{\Omega_l}dd^c(e^{\mathcal{G}}-e^{\mathcal{G}_l})&=\int_{\partial\Omega_l}d^c(e^{\mathcal{G}}-e^{\mathcal{G}_l})=\int_{\partial\Omega_l}e^{G}d^c(\mathcal{G}_l+(\mathcal{G}-\mathcal{G}_l))-e^{\mathcal{G}_l}d^c\mathcal{G}_l\\
	&=\int_{\partial\Omega_l}(e^\mathcal{G}-e^{\mathcal{G}_l})d^c\mathcal{G}_l+\int_{\partial\Omega_l}e^{\mathcal{G}-\mathcal{G}_l}d^c(\mathcal{G}-\mathcal{G}_l).
	\end{split}
\end{equation}
As $\mathcal{G}-\mathcal{G}_l$ is harmonic on $\Omega_l$, then $e^{\mathcal{G}-\mathcal{G}_l}\in SH(\Omega_l)$, which implies that
\begin{equation}
	\label{eq:211020i}\begin{split}
	\int_{\partial\Omega_l}e^{\mathcal{G}-\mathcal{G}_l}d^c(\mathcal{G}-\mathcal{G}_l)=\int_{\partial\Omega_l}d^ce^{\mathcal{G}-\mathcal{G}_l}
=\int_{\Omega_l}dd^ce^{\mathcal{G}-\mathcal{G}_l}\ge0.
	\end{split}\end{equation}
Note that $\mathcal{G}_l=0$ on $\partial\Omega_l$ and $e^{\mathcal{G}-\mathcal{G}_l}$ is subharmonic on $\Omega_l$. Using Lemma \ref{l:di} and $\mathcal{G}_l=2\sum_{1\le j\le m}p_jG_{\Omega}(\cdot,z_j)$, we have
\begin{equation}
	\begin{split}
		\label{eq:211020j}
		\int_{\partial\Omega_l}(e^\mathcal{G}-e^{\mathcal{G}_l})(d^c\mathcal{G}_l)=\int_{\partial\Omega_l}(e^{\mathcal{G}-\mathcal{G}_l}-1)(d^c\mathcal{G}_l)\ge2\sum_{1\le j\le m}p_j\left(e^{\mathcal{G}-\mathcal{G}_l}(z_j)-1\right).
	\end{split}
\end{equation}
As  $\{\mathcal{G}-\mathcal{G}_l\}$ is increasingly convergent to $0$ on $\Omega$, inequality \eqref{eq:211020j} implies that
\begin{equation}
	\label{eq:211020k}
		\liminf_{l\rightarrow+\infty}\int_{\partial\Omega_l}(e^\mathcal{G}-e^{\mathcal{G}_l})(d^c\mathcal{G}_l)\ge0.
\end{equation}
Combining equality \eqref{eq:241201a}, \eqref{eq:211020h} and inequality \eqref{eq:211020i}, \eqref{eq:211020k}, we obtain
\begin{equation}
	\nonumber \begin{split}
	\sqrt{-1}\int_{\Omega}\partial\overline{\partial}e^\mathcal{G}=&\pi\lim_{l\rightarrow+\infty}\int_{\Omega_l}dd^ce^\mathcal{G}
	=\pi\lim_{l\rightarrow+\infty}\int_{\Omega_l}dd^c(e^{\mathcal{G}}-e^{\mathcal{G}_l})+2\pi\sum_{1\le j\le m}p_j\\
	=&\pi\lim_{l\rightarrow+\infty}(\int_{\partial\Omega_l}(e^\mathcal{G}-e^{\mathcal{G}_l})d^c\mathcal{G}_l+\int_{\partial\Omega_l}e^{\mathcal{G}-\mathcal{G}_l}d^c(\mathcal{G}-\mathcal{G}_l))\\
	&+2\pi\sum_{1\le j\le m}p_j\\
	\ge&2\pi\sum_{1\le j\le m}p_j.
	\end{split}
\end{equation}
It follows from Fatou's Lemma and $\int_{\Omega_l}\sqrt{-1}\partial\overline\partial e^{\mathcal{G}_l}=2\pi\sum_{1\le j\le m}p_j$ that
\begin{displaymath}
	\begin{split}
		\sqrt{-1}\int_{\Omega}\partial\overline{\partial}e^\mathcal{G}&=\int_{\Omega}\lim_{l\rightarrow+\infty}\sqrt{-1}\partial\overline\partial e^{\mathcal{G}_l}\le\liminf_{l\rightarrow+\infty}\int_{\Omega_l}\sqrt{-1}\partial\overline\partial e^{\mathcal{G}_l}=2\pi\sum_{1\le j\le m}p_j.
	\end{split}
\end{displaymath}

Thus, Lemma \ref{l:4} holds.
\end{proof}

We present an orthogonal property of $\partial e^{\mathcal{G}}$ in the following lemma.
\begin{Lemma}
	\label{l:5}For any  $\beta\in H^0(\Omega,\mathcal{O}(K_{\Omega}))$ satisfying $\int_{\Omega}|\beta|^2<+\infty$,  $\int_{\Omega}\partial e^{\mathcal{G}} \wedge\overline\beta=0$.
\end{Lemma}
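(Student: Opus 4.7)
My plan is to first prove the identity on the exhausting subdomains $\Omega_l$ from Lemma \ref{l:1}, where a direct application of Stokes' theorem gives zero, and then pass to the limit using the compact-uniform convergence of Lemma \ref{l:2} together with a uniform $L^2$ estimate on $\partial e^{G_l}$.

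For the first step I would fix $l$ and show $\int_{\Omega_l}\partial e^{G_l}\wedge\overline\beta = 0$. Since $\beta$ is a holomorphic $(1,0)$-form on a Riemann surface, $\overline\beta$ is a smooth $(0,1)$-form with $d\overline\beta = 0$, and $\overline\partial e^{G_l}\wedge\overline\beta$ vanishes by type. Thus $d(e^{G_l}\overline\beta) = \partial e^{G_l}\wedge\overline\beta$, so Stokes' theorem together with $e^{G_l}\equiv 1$ on $\partial\Omega_l$ reduces the integral to $\int_{\partial\Omega_l}\overline\beta$, which equals $\int_{\Omega_l} d\overline\beta = 0$ by a second application of Stokes.

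Next I would establish the uniform $L^2$ bound $\int_{\Omega_l}|\partial e^{G_l}|^2 \le 2\pi\sum_{1\le j\le m}p_j$. Since $G_l$ is harmonic on $\Omega_l\setminus\{z_1,\ldots,z_m\}$, the pointwise identity $\sqrt{-1}\,\partial e^{G_l}\wedge\overline{\partial e^{G_l}} = e^{G_l}\sqrt{-1}\,\partial\overline\partial e^{G_l}$ holds there, and integrating, using $e^{G_l}\le 1$ (as $G_l\le 0$) together with $\sqrt{-1}\int_{\Omega_l}\partial\overline\partial e^{G_l} = 2\pi\sum p_j$ from the proof of Lemma \ref{l:4}, yields the bound. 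Then, applying Lemma \ref{l:2} and Step~1 along the subsequence $\{l_n\}$, for any compact $K\subset\Omega$ and $n$ large enough that $K\subset\Omega_{l_n}$, Cauchy--Schwarz gives
\begin{equation*}
\left|\int_K\partial e^{G_{l_n}}\wedge\overline\beta\right| = \left|\int_{\Omega_{l_n}\setminus K}\partial e^{G_{l_n}}\wedge\overline\beta\right| \le \sqrt{\,2\pi\textstyle\sum_{j} p_j\,}\,\|\beta\|_{L^2(\Omega\setminus K)}.
\end{equation*}
Letting $n\to\infty$ (using compact-uniform convergence $\partial e^{G_{l_n}}\to\partial e^G$) and then exhausting $K\nearrow\Omega$ with $\beta\in L^2(\Omega)$ concludes that $\int_\Omega\partial e^G\wedge\overline\beta = 0$.

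The main obstacle will be passing to the limit when the integration domain $\Omega_l$ itself is growing: pure compact-uniform convergence of $\partial e^{G_l}$ is insufficient, and the uniform $L^2$ bound obtained from the harmonicity identity for $G_l$ is the essential tool for controlling the tail on $\Omega\setminus K$ in a $\beta$-independent way.
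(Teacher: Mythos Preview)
Your proposal is correct and follows essentially the same route as the paper: vanishing on $\Omega_l$ by Stokes, the uniform $L^2$ bound $\int_{\Omega_l}|\partial e^{G_l}|^2\le 2\pi\sum_j p_j$ from the harmonicity identity, and a Cauchy--Schwarz tail estimate together with compact-uniform convergence to pass to the limit. The only detail to make explicit at the end is that the same identity applied to $G$ itself (with Lemma~\ref{l:4}) gives $\int_\Omega|\partial e^G|^2<\infty$, so $\partial e^G\wedge\overline\beta\in L^1(\Omega)$ and hence $\int_K\partial e^G\wedge\overline\beta\to\int_\Omega\partial e^G\wedge\overline\beta$ as $K\nearrow\Omega$; the paper records this as inequality~\eqref{eq:1020b}.
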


\begin{proof}
 Let $P:\Delta\rightarrow\Omega$ be the universal covering from unit disc $\Delta$ to $\Omega$.
There exists $f_{z_j}\in\mathcal{O}(\Delta)$, such that $|f_{z_j}(z)|=P^{*}e^{G_{\Omega}(z,z_j)}$ for any $j\in\{1,2,\ldots,m\}$. As $p_{j}>2$  for any $j\in\{1,2,\ldots,m\}$, by a direct calculation,  we have
\begin{equation}
\label{eq:211026h}
\begin{split}
	&\partial\overline{\partial}e^{2\sum_{1\le j\le m}p_jG_{\Omega}(\cdot,z_j)}\\
	=&\partial\Bigg(\sum_{1\le j_1\le m}e^{2\sum_{1\le j\le m}p_jG_{\Omega}(\cdot,z_j)-2G_{\Omega}(\cdot,z_{j_1})}p_{j_1}P_*(f_{z_{j_1}}\overline{\partial f_{z_{j_1}}})\Bigg)	\\
	=&\sum_{1\le j_1\le m}\sum_{j_2\not=j_1}	e^{2\sum_{1\le j\le m}p_jG_{\Omega}(\cdot,z_j)}p_{j_1}p_{j_2}P_*\left(\frac{\partial f_{z_{j_2}}}{f_{z_{j_2}}}\wedge\frac{\overline{\partial f_{z_{j_1}}}}{\overline{f_{z_{j_1}}}}\right)\\
	&+\sum_{1\le j_1\le m}e^{2\sum_{1\le j\le m}p_jG_{\Omega}(\cdot,z_j)}p_{j_1}^2P_*\left(\frac{{\partial f_{z_{j_1}}}}{{f_{z_{j_1}}}}\wedge\frac{\overline{\partial f_{z_{j_1}}}}{\overline{f_{z_{j_1}}}}\right)\\
		=&\frac{1}{\sqrt{-1}} P_*(\prod\limits_{1\le j\le m}|f_{z_j}|^{2p_j})\Bigg|P_*\Bigg(\sum_{1\le j\le m}p_j\frac{df_{z_j}}{f_{z_j}}\Bigg)\Bigg|^2
	\end{split}\end{equation}
	and
	\begin{equation}
	\nonumber
	\begin{split}
		\partial e^{2\sum_{1\le j\le m}p_jG_{\Omega}(\cdot,z_j)}&=\sum_{1\le k\le m}e^{2\sum_{1\le j\le m}p_jG_{\Omega}(\cdot,z_j)-2G_{\Omega}(\cdot,z_k)}p_kP_*(\overline{f_{z_k}}\partial f_{z_k})\\
		&=e^{2\sum_{1\le j\le m}p_jG_{\Omega}(\cdot,z_j)}P_*\Bigg(\sum_{1\le j\le m}p_j\frac{df_{z_k}}{f_{z_k}}\Bigg),
	\end{split}
\end{equation} which implies that
$|\partial e^\mathcal{G} |^2=\sqrt{-1}e^\mathcal{G} \partial\overline\partial e^\mathcal{G}.$
Note that $e^{\mathcal{G}}\in SH(\Omega)$ satisfying $e^\mathcal{G}\le1$.
Lemma \ref{l:4} shows that
\begin{equation}
	\nonumber
	\int_{\Omega}|\partial e^\mathcal{G}|^2=\sqrt{-1} \int_{\Omega}e^\mathcal{G}(\partial\overline\partial e^\mathcal{G})\le\sqrt{-1} \int_{\Omega}\partial\overline\partial e^\mathcal{G}=2\pi\sum_{1\le j\le m}p_j.
\end{equation}
Similarly, we have $
	\int_{\Omega_l}|\partial e^{\mathcal{G}_l}|^2\le2\pi\sum_{1\le j\le m}p_j.
$
For any $\epsilon>0$, there exists $l_1>0$ such that
$
	\int_{\Omega\backslash\overline{\Omega_{l_1}}}|\beta|^2<\epsilon^2.
$
As $\{\partial e^{\mathcal{G}_l}\}$ is uniformly convergent to $\partial e^\mathcal{G}$ on any compact subset of $\Omega$, then there exists $M_1>l_1$ such that for any $l\ge M_1$, we have
$
	\left|\int_{{\Omega_{l_1}}}(\partial e^\mathcal{G}-\partial e^{\mathcal{G}_l})\wedge\overline{\beta}\right|<\epsilon.
$
Hence, we have
\begin{equation}
\nonumber
	\begin{split}
		&\left|\int_{\Omega}\partial e^\mathcal{G}\wedge\overline{\beta}-\int_{\Omega_l}\partial e^{\mathcal{G}_l}\wedge\overline{\beta}\right|\\
		\le&\left|\int_{\Omega\backslash\overline{\Omega_{l_1}}}\partial e^\mathcal{G}\wedge\overline{\beta}\right|+\left|\int_{\Omega_l\backslash\overline{\Omega_{l_1}}}\partial e^{\mathcal{G}_l}\wedge\overline{\beta}\right|+\left|\int_{{\Omega_{l_1}}}(\partial e^\mathcal{G}-\partial e^{\mathcal{G}_l})\wedge\overline{\beta}\right|\\
		\le&\left(\int_{\Omega}|\partial e^\mathcal{G}|^2\right)^{\frac{1}{2}}\left(\int_{\Omega\backslash\overline{\Omega_{l_1}}}|\beta|^2\right)^{\frac{1}{2}}+\left(\int_{\Omega_l}|\partial e^{\mathcal{G}_l}|^2\right)^{\frac{1}{2}}\left(\int_{\Omega\backslash\overline{\Omega_{l_1}}}|\beta|^2\right)^{\frac{1}{2}}+\epsilon\\
		\le&\Bigg(2\bigg(2\pi\sum_{1\le j\le m}p_j\bigg)^{\frac{1}{2}}+1\Bigg)\epsilon.
	\end{split}
\end{equation}
As $d\overline{\beta}=0$ and $e^{\mathcal{G}_l}=1$ on $\partial\Omega_l$, it follows from  Stokes' theorem that
\begin{equation}
	\nonumber
	\begin{split}
		\int_{\Omega_l}\partial e^{\mathcal{G}_l}\wedge\overline{\beta}=	\int_{\Omega_l}d(e^{\mathcal{G}_l}\wedge\overline{\beta})
		=\int_{\partial\Omega_l}e^{\mathcal{G}_l}\overline{\beta}
		=\int_{\Omega_l}d\overline{\beta}
		=0.
	\end{split}
\end{equation}
By the arbitrariness of $\epsilon$, we have
$\int_{\Omega}\partial e^\mathcal{G}\wedge\overline{\beta}=0.$
Thus, Lemma \ref{l:5} holds.
\end{proof}

\section{Optimal jets $L^2$ extension on open Riemann surfaces}

In this section, we prove an optimal jets $L^2$ extension theorem on open Riemann surfaces (Proposition \ref{p:extension}). Before presenting that, we recall two lemmas.

\begin{Lemma}[see \cite{GY-concavity,guan-zhou13ap}] \label{lem:L2} Let $c$ be a nonnegative function on $(0,+\infty)$ such that $c(t)e^{-t}$ is decreasing on $(0,+\infty)$.
Let $M$ be a Stein manifold.
Let $\psi\in PSH^-(M)$, and let
 $\varphi\in PSH(M)$. Take any $B\in(0,+\infty)$ and $t_{0}\geq 0$.
Let $F\in H^0(\{\psi<-t_{0}\},\mathcal{O}(K_{M}))$
satisfying
$
\int_{K\cap\{\psi<-t_{0}\}}|F|^{2}<+\infty
$
for any  $K\Subset M$.
Then there exists 
 $\tilde{F}\in H^0(M,\mathcal{O}(K_{M}))$, such that
\begin{equation}
\nonumber
\begin{split}
\int_{M}&|\tilde{F}-(1-b_{t_0,B}(\psi))F|^{2}e^{-\varphi+v_{t_0,B}(\psi)}c(-v_{t_0,B}(\psi))\leq C\int_{0}^{t_{0}+B}c(t)e^{-t}dt
\end{split}
\end{equation}
where
$b_{t_0,B}(t):=\int_{-\infty}^{t}\frac{1}{B}\mathbb{I}_{\{-t_{0}-B< s<-t_{0}\}}ds$,
$v_{t_0,B}(t):=\int_{-t_0}^{t}b_{t_0,B}(s)ds-t_0$ and $C:=\int_{M}\frac{1}{B}\mathbb{I}_{\{-t_{0}-B<\psi<-t_{0}\}}|F|^{2}e^{-\varphi}$.
\end{Lemma}

\begin{Lemma}[see \cite{GY-concavity}]
	\label{l:converge}
	Let $M$ be a complex manifold, and $S$ be an analytic subset of $M$.  	
	Let $\{g_j\}$ be a sequence of nonnegative Lebesgue measurable functions on $M$, which satisfies that $g_j$ are a.e. convergent to a function $g$ on  $M$ when $j\rightarrow+\infty$. Assume that for any compact subset $K$ of $M\backslash S$, there exist $s_K\in(0,+\infty)$ such that
	$\int_{K}{g_j}^{-s_K}dV_M<+\infty$
	 for any $j$, where $dV_M$ is a continuous volume form on $M$.
	
 Let $\{F_j\}\subset  H^0(M,\mathcal{O}(K_{M}))$. Assume that $\liminf_{j\rightarrow+\infty}\int_{M}|F_j|^2g_jdV_M\leq C$, where $C>0$ is a constant. Then there exists a subsequence of $\{F_{j}\}$ uniformly convergent to an $F\in H^0(M,\mathcal{O}(K_{M}))$ on any compact subset of $M$, and 
 $$\int_{M}|F|^2gdV_M\leq C.$$
\end{Lemma}

Let $\Omega$  be an open Riemann surface, which admits a nontrivial Green function $G_{\Omega}$.
Let $Z_0':=\{z_j:1\le j<\gamma \}$ be a discrete subset of $\Omega$, where $\gamma\in\mathbb{Z}_{\geq2}\cup\{+\infty\}$. Let $\psi\in PSH^-(\Omega)$  satisfy $p_j:=\frac{1}{2}\nu(dd^c\psi,z_j)>0$, and let $\varphi$ be a  function on $\Omega$ such that $\varphi+\psi\in PSH(\Omega)$. By the Weierstrass Theorem on open Riemann surface (see \cite{OF81}) and  Siu's Decomposition Theorem \cite{siu74}, then we have
\begin{equation*}
	\varphi+\psi=2\log|g_0|+2u_0,
\end{equation*}
where $g_0\in\mathcal{O}(\Omega)$ and $u_0\in SH(\Omega)$ such that $\nu(dd^cu,z)\in[0,1)$ for all $z\in\Omega$.

Let $w_j$ be a local coordinate on a neighborhood $V_{z_j}\Subset\Omega$ of $z_j$ satisfying $w_j(z_j)=0$ for $z_j\in Z'_0$, where $V_{z_j}\cap V_{z_k}=\emptyset$ for any $j\not=k$. Denote that $V_0:=\cup_{1\le j<\gamma}V_{z_j}$.
Let $f\in H^0(V_0,\mathcal{O}(K_{\Omega}))$. Denote $f=d_{1,j}w_j^{k_{1,j}}h_{1,j}dw_{j}$ and $g_0=d_{2,j}w_{j}^{k_{2,j}}h_{2,j}$ on $V_{z_j}$, where $d_{i,j}\not=0$ are constants, $k_{1,j}$ and $k_{2,j}$ are nonnegative integers, and $h_{i,j}\in\mathcal{O}(V_{z_j})$ satisfying $h_{i,j}(z_j)=1$ for $i\in\{1,2\}$ and $1\le j<\gamma$.

Denote that $I_0:=\{j:1\le j<\gamma\,\&\, k_{1,j}+1-k_{2,j}\le0\}$. Let $c(t)$ be a positive measurable function on $(0,+\infty)$ satisfying $c(t)e^{-t}$ is decreasing and $\int_0^{+\infty}c(t)e^{-t}<+\infty$. Using Lemma \ref{lem:L2}, we give an optimal jets $L^2$ extension theorem on open Riemann surfaces.
\begin{Proposition}\label{p:extension}
	Assume $k_{1,j}+1=k_{2,j}$ and $u_0(z_j)>-\infty$ for $j\in I_0$. Then there exists an ${F}\in H^0(\Omega,\mathcal{O}(K_{\Omega}))$ such that ${F}=f+o(w_j^{k_{2,j}-1})dw_j$ near $z_j$  for any $j$ and
	\begin{equation}
		\label{eq:241203b}
		\int_{\Omega}|{F}|^2e^{-\varphi}c(-\psi)\le\left(\int_0^{+\infty}c(t)e^{-t}dt\right)\sum_{j\in I_0}\frac{2\pi |d_{1,j}|^2e^{-2u_0(z_j)}}{p_j|d_{2,j}|^2}.
	\end{equation}
\end{Proposition}

\begin{proof}
As $c(t)e^{-t}$ is decreasing on $(0,+\infty)$,  following from Lemma \ref{l:green-sup2}  we have $\psi\leq\tilde\psi:=2\sum_{1\le j<\gamma }p_jG_{\Omega}(\cdot,z_j)$ and
$e^{-\varphi}c(-\psi)\leq e^{-(\varphi+\psi-\tilde\psi)}c(-\tilde\psi).$ Thus, we can assume that $\psi=\tilde\psi=2\sum_{1\le j<\gamma }p_jG_{\Omega}(\cdot,z_j)$.

The following remark shows that it suffices to consider the case $\gamma<+\infty$.
\begin{Remark}\label{r:finite}Let $\Omega_l$ be as in 
 Lemma \ref{l:1}. Note that $Z_l:=\Omega_l\cap Z_0'$ is a set of finite points. Denote that $\psi_l:=2\sum_{z_j\in Z_l}p_jG_{\Omega_l}(\cdot,z_j)$, $\varphi_l=\varphi+\psi-\psi_l$  on $\Omega_l$, and $I_l:=I_0\cap\{j:z_j\in Z_l\}$.
Assume Proposition \ref{p:extension} holds for the case $\gamma<+\infty$. Then there exists  $F_l\in H^0(\Omega_l,\mathcal{O}(K_{\Omega_l}))$  such that $F_l=f+o(w_j^{k_{2,j}-1})dw_j$ near $z_j$  for any $z_j\in Z_l$ and
\begin{equation}\label{eq:241203a}
		\int_{\Omega_l}|F_l|^2e^{-\varphi}c(-\psi)
		\le\int_{\Omega_l}|F_l|^2e^{-\varphi_l}c(-\psi_l)
		\leq\left(\int_0^{+\infty}c(t)e^{-t}dt\right)\sum_{j\in I_l}\frac{2\pi|d_{1,j}|^2 e^{-2u_0(z_j)}}{p_j|d_{2,j}|^2}
\end{equation}
since $\psi<\psi_l$ and $c(t)e^{-t}$ is decreasing on $(0,+\infty)$.
Note that $\psi$ is smooth on $\Omega\backslash Z_0'$.
 For any compact subset $K$ of $\Omega\backslash Z_0'$, there exists $s_K>0$ such that $\int_{K}e^{-s_K\psi}dV_{\Omega}<+\infty$, where $dV_{\Omega}$ is a continuous volume form on $\Omega$.  Then  we have
$$\int_{K}\left(\frac{e^{\varphi}}{c(-\psi)}\right)^{s_K}dV_{\Omega}=\int_{K}\left(\frac{e^{\varphi+\psi}}{c(-\psi)}\right)^{s_K}e^{-s_K\psi}dV_{\Omega}\leq C\int_{K}e^{-s_K\psi}dV_{\Omega}<+\infty.$$
By Lemma \ref{l:converge} and the diagonal method, there exists a subsequence of $\{F_l\}$, denoted still by $\{F_l\}$, which is uniformly convergent to an  $ F\in H^0(\Omega,\mathcal{O}(K_{\Omega}))$  on any compact subset of $\Omega$. Thus, ${F}=f+o(w_j^{k_{2,j}-1})dw_j$ near $z_j$  for  $1\le j<\gamma$, inequality \eqref{eq:241203b} holds by letting $l\rightarrow+\infty$ in inequality \eqref{eq:241203a}.
\end{Remark}

Now, we prove the case  that $\gamma=m+1$. Without loss of generality, assume $I_0=\{1,2,\ldots,m_1\}$, where  $m_1<m$  ($I_0=\emptyset$ if and only if $m_1=0$).

As $\Omega$ is a Stein manifold, then there exist $u_l\in SH(\Omega)\cap C^{\infty}(\Omega)$  decreasingly convergent to $u_0$ with respect to $l$ (see \cite{FN80}).
By Lemma \ref{l:G-compact},  $\{\psi<-t_0\}\Subset V_0$ for some $t_0>0$, then $\int_{\{\psi<-t_0\}}|f|^2<+\infty$. Using Lemma \ref{lem:L2}, we obtain an $F_{l,t}\in H^0(\Omega,\mathcal{O}(K_{\Omega}))$ satisfying
\begin{equation}
	\label{eq:211008d}
	\begin{split}
&\int_{\Omega}|F_{l,t}-(1-b_{t,1}(\psi))f|^{2}e^{-2\log|g_0|-2u_l+v_{t,1}(\psi)}c(-v_{t,1}(\psi))\\
\leq& \left(\int_{0}^{t+1}c(s)e^{-s}ds\right) \int_{\Omega}\mathbb{I}_{\{-t-1<\psi<-t\}}|f|^2e^{-2\log|g_0|-2u_l},
\end{split}
\end{equation}
where $t\ge t_0$. Note that $b_{t,1}(s)=0$ for large enough $s$, then $(F_{l,t}-f,z_j)\in(\mathcal{O}(K_{\Omega})\otimes\mathcal{I}(2\log|g_0|))_{z_j}$ i.e. $F_{l,t}=f+o(w_j^{k_{2,j}-1})dw_{j}$ near $z_j$ for any $j\in\{1,2,\ldots,m\}$.

For any $\epsilon>0$, there exists $t_1>t_0$, such that

$(1)$ $\sup_{z\in\{\psi<-t_1\}\cap V_{z_j}}|h_1(z)-h_1(z_j)|<\epsilon$ for any $j\in\{1,2,\ldots,m\}$, where $h_1$ is a smooth function on $V_0$ satisfying that $h_1|_{V_{z_j}}=\psi-2p_j\log|w_j|$;

$(2)$ $\sup_{z\in\{\psi<-t_1\}\cap V_{z_j}}|\frac{d_{1,j}h_{1,j}}{d_{2,j}h_{2,j}}(z)|\le(|\frac{d_{1,j}}{d_{2,j}}|+\epsilon)$ for any $j\in\{1,2,\ldots,m\}$;

$(3)$ $\sup_{z\in\{\psi<-t_1\}\cap V_{z_j}}2|u_l(z)-u_l(z_j)|<\epsilon$ for any $j\in\{1,2,\ldots,m\}$.

Note that $k_{1,j}-k_{2,j}=-1$ for $1\le j\le m_1$ and $k_{1,j}-k_{2,j}>-1$ for $m_1<j\le m$. 
\begin{equation}
\nonumber
\begin{split}
	&\limsup_{t\rightarrow+\infty}\int_{\Omega}\mathbb{I}_{\{-t-1<\psi<-t\}}|f|^2e^{-2\log|g_0|-2u_l}\\
	\le&\limsup_{t\rightarrow+\infty}\sum_{1\le j\le m}\int_{\{-t-1-\epsilon<2p_j\log|w_j|+h_1(z_j)<-t+\epsilon\}}(|\frac{d_{1,j}}{d_{2,j}}|+\epsilon)^2|w_j|^{2(k_{1,j}-k_{2,j})}e^{-2u_l(z_j)+\epsilon}\\
	\le&\sum_{1\le j\le m}4\pi(|\frac{d_{1,j}}{d_{2,j}}|+\epsilon)^2e^{-2u_l(z_j)+\epsilon}\limsup_{t\rightarrow+\infty}\int_{e^{-\frac{t+1+\epsilon+h_1(z_j)}{2p_j}}}^{e^{-\frac{t-\epsilon+h_1(z_j)}{2p_j}}}r^{2(k_{1,j}-k_{2,j})+1}dr\\
	=&\sum_{1\le j\le m_1}2\pi(|\frac{d_{1,j}}{d_{2,j}}|+\epsilon)^2e^{-2u_l(z_j)+\epsilon}\frac{1+2\epsilon}{p_j}.
\end{split}
\end{equation}
Letting $\epsilon\rightarrow+\infty$, we have
\begin{equation}
\nonumber
\limsup_{t\rightarrow+\infty}\int_{\Omega}\mathbb{I}_{\{-t-1<\psi<-t\}}|f|^2e^{-2\log|g_0|-2u_l}
	\le\sum_{1\le j\le m_1}2\pi\frac{|d_{1,j}|^2}{p_j|d_{2,j}|^2}e^{-2u_l(z_j)}
	<+\infty.
\end{equation}
As $v_{t,1}(\psi)\ge\psi$ and $c(t)e^{-t}$ is decreasing, combining inequality \eqref{eq:211008d},  we have
\begin{equation}
	\label{eq:211009c}
	\begin{split}
&\limsup_{t\rightarrow+\infty}\int_{\Omega}|F_{l,t}-(1-b_{t,1}(\psi))f|^{2}e^{-2\log|g_0|-2u_l+\psi}c(-\psi)\\
\le&\limsup_{t\rightarrow+\infty}\int_{\Omega}|F_{l,t}-(1-b_{t,1}(\psi))f|^{2}e^{-2\log|g_0|-2u_l+v_{t,1}(\psi)}c(-v_{t,1}(\psi))\\
\leq& \limsup_{t\rightarrow+\infty}\left(\int_{0}^{t+1}c(s)e^{-s}ds\right) \int_{\Omega}\mathbb{I}_{\{-t-1<\psi<-t\}}|f|^2e^{-2\log|g_0|-2u_l}\\
\leq&\left(\int_0^{+\infty}c(s)e^{-s}ds\right)\sum_{1\le j\le m_1}2\pi\frac{|d_{1,j}|^2}{p_j|d_{2,j}|^2}e^{-2u_l(z_j)}\\
	<&+\infty.
\end{split}
\end{equation}
Denote that $Y:=\{z\in\Omega:g_0(z)=0\}$. For any open set $K\Subset\Omega\backslash Y$, it follows from $b_{t,1}(s)=1$ for any $s\ge -t$ and $c(s)e^{-s}$ is decreasing with respect to $s$ that 
$$\int_{K}|(1-b_{t,1}(\psi))f|^2e^{-2\log|g_0|-2u_l+\psi}c(-\psi)\le C_K\int_{\{\psi<-t_1\}}|f|^2<+\infty$$
 for any $t>t_1$, where $C_k>0$ is a constant.
So, 
$$\limsup_{t\rightarrow+\infty}\int_{K}|F_{l,t}|^2e^{-2\log|g_0|-2u_l+\psi}c(-\psi)<+\infty.$$
Not that $Y$ is discrete subset of $\Omega$. By Lemma \ref{l:converge} and the diagonal method,
there exists a subsequence of $\{F_{l,t}\}_{t\rightarrow+\infty}$ denoted by $\{F_{l,t_m}\}_{m\rightarrow+\infty}$
uniformly convergent to an  $F_{l}\in H^0(\Omega,\mathcal{O}(K_{\Omega}))$ on any compact subset of  $\Omega$. Then it follows from inequality \eqref{eq:211009c} and Fatou's Lemma that
\begin{equation}\label{eq:241203c}
\begin{split}
&\int_{\Omega}|F_{l}|^{2}e^{-2\log|g_0|-2u_l+\psi}c(-\psi)
\\=&\int_{\Omega}\liminf_{m\rightarrow+\infty}|F_{l,t_m}-(1-b_{t_m,1}(\psi))f|^{2}e^{-2\log|g_0|-2u_l+\psi}c(-\psi)\\
\leq&\liminf_{m\rightarrow+\infty}\int_{\Omega}|F_{l,t_m}-(1-b_{t_m,1}(\psi))f|^{2}e^{-2\log|g_0|-2u_l+\psi}c(-\psi)\\
\leq&\left(\int_0^{+\infty}c(s)e^{-s}ds\right)\sum_{1\le j\le m_1}2\pi\frac{|d_{1,j}|^2}{p_j|d_{2,j}|^2}e^{-2u_l(z_j)}.
\end{split}	
\end{equation}
Note that $\lim_{l\rightarrow+\infty}u_l(z_j)=u(z_j)>-\infty$ for  $1\le j\le m_1$.
By Lemma \ref{l:converge} ($g_l=e^{-2\log|g_0|-2u_l+\psi}c(-\psi)$),
there exists a subsequence of $\{F_{l}\}$ 
uniformly convergent to an  $ F\in H^0(\Omega,\mathcal{O}(K_{\Omega}))$  on any compact subset of $\Omega$, which satisfying ${F}=f+o(w_j^{k_{2,j}-1})dw_j$ near $z_j$  for any $j$. Taking a limit in inequality \eqref{eq:241203c}, we have inequality \eqref{eq:241203b} holds.
\end{proof}

\section{The concavity property of minimal $L^2$ integrals}

In this section, we recall some results about the concavity property of minimal $L^2$ integrals and prove the necessary conditions for the concavity degenerating to linearity on open Riemann surfaces.

\subsection{The concavity property on weakly pseudoconvex K\"ahler manifolds}
\label{sec:4.1}
\

In this section, we follow the notations $M$, $X$, $Z$, $\psi$, $\varphi$, $Z_0$, $f$, $\mathcal{F}$ and $\mathcal{P}_{T,M}$ in Theorem \ref{maintheorem}.

Denote that
\begin{displaymath}
	\begin{split}
		\mathcal{H}^2(c,t):=\bigg\{\tilde{f}\in H^0(\{\psi<-t\},\mathcal{O}(K_M)):&\int_{\{\psi<-t\}}|\tilde{f}|^2e^{-\varphi}c(-\psi)<+\infty\\
		&\&\,(\tilde{f}-f)\in H^0(Z_0,(\mathcal{O}(K_{M})\otimes\mathcal{F})|_{Z_0})\bigg\},
	\end{split}
\end{displaymath}
where $t\in[T,+\infty)$, $c$ is a nonnegative measurable function on $(T,+\infty)$.
The minimal $L^2$ integral is defined by 
$$G(t;c):=	\inf\{\int_{\{\psi<-t\}}|\tilde{f}|^{2}e^{-\varphi}c(-\psi):\tilde f\in \mathcal{H}^2(c,t)\}.$$
We denote $G(t;c)$ by $G(t)$ without misunderstanding.
Let $c\in\mathcal{P}_{T,M}$ satisfy that $\int_{T}^{+\infty}c(s)e^{-s}ds<+\infty$, and let $h(t):=\int_{t}^{+\infty}c(s)e^{-s}ds$. Assume $G(t)\not\equiv+\infty$, then Theorem \ref{maintheorem} shows that:

 \emph{$G(h^{-1}(r))$ is concave with respect to $r\in(0,\int_{T}^{+\infty}c(s)e^{-s}ds)$.}

The following lemma gives the existence and uniqueness of the minimal holomorphic section.
\begin{Lemma}[see \cite{GMY-concavity2}]\label{l:unique}
	 For any $t\ge T$, if $G(t)<+\infty$, there exists a unique  $F_t\in\mathcal{H}^2(c,t)$ such that $G(t)=\int_{\{\psi<-t\}}|F|^2e^{-\varphi}c(-\psi).$
\end{Lemma}	

If $G( {h}^{-1}(r))$ is linear, the unique minimal holomorphic sections on all $\{\psi<-t\}$
 is the same.
\begin{Proposition}[see \cite{GMY-concavity2}]
	\label{c:linear}
If $G( {h}^{-1}(r))$ is linear with respect to $r\in(0,\int_{T}^{+\infty}c(t)e^{-t}dt)$, 
	then there is a unique  $F\in \mathcal{H}^2(c,T)$ satisfying $G(t;c)=\int_{\{\psi<-t\}}|F|^2e^{-\varphi}c(-\psi)$ for all $t\geq T$ and
	\begin{equation}
		\nonumber
		\int_{M}|F|^2e^{-\varphi}a(-\psi)=\frac{G(T;c)}{\int_{T}^{+\infty}c(t)e^{-t}dt}\int_{T}^{+\infty} a(t)e^{-t}dt
	\end{equation}
	for any nonnegative measurable function $a$ on $(T,+\infty)$.
	Furthermore, if $\mathcal H^2(\tilde{c},t_0)\subset\mathcal H^2(c,t_0)$ for some $t_0\geq T$ and  nonnegative measurable function $\tilde{c}$ on $(T,+\infty)$,
	\begin{equation}
		\nonumber
		G(t_0;\tilde{c})=\int_{\{\psi<-t_0\}}|F|^2e^{-\varphi}\tilde{c}(-\psi)=\frac{G(T;c)}{\int_{T}^{+\infty}c(t)e^{-t}dt}\int_{t_0}^{+\infty} \tilde{c}(t)e^{-t}dt.\end{equation}	\end{Proposition}

We recall two remarks about the condition $\mathcal H^2(\tilde{c},t_0)\subset\mathcal H^2(c,t_0)$.

\begin{Remark}[see \cite{GY-concavity}]
	\label{r:c} Let $\tilde{c}\in\mathcal{P}_{T,M}$. If $\mathcal H^2(\tilde{c},t_1)\subset\mathcal H^2(c,t_1)$, then $\mathcal H^2(\tilde{c},t_2)\subset\mathcal H^2(c,t_2)$, where $t_1>t_2>T$. In the following, we give some sufficient conditions of
	$\mathcal H^2(\tilde{c},t_0)\subset\mathcal H^2(c,t_0)$ for $t_0> T$:
	
	$(1)$  $\lim_{t\rightarrow+\infty}\frac{\tilde{c}(t)}{c(t)}>0$;
	
	$(2)$  $\mathcal H^2(c,t_0)\not=\emptyset$ and there exists $t>t_0$, such that $\{\psi<-t\}\Subset\{\psi<-t_0\}$, $\{z\in\overline{\{\psi<-t\}}:\mathcal{I}(\varphi+\psi)_z\not=\mathcal{O}_z\}\subset Z_0$ and $\mathcal{F}|_{\overline{\{\psi<-t\}}}=\mathcal{I}(\varphi+\psi)|_{\overline{\{\psi<-t\}}}$.
\end{Remark}
In \cite{GY-concavity}, the above remark requires $M$ is Stein manifold and $c,\tilde c\in \mathcal{P}_{T}$, but its proof remains valid under the assumptions of the remark stated above.

 \begin{Remark}[see \cite{GY-concavity}]
	\label{l:c'}
	  If $c(t)$ is a positive measurable function on $(T,+\infty)$ such that $c(t)e^{-t}$ is  decreasing on $(T,+\infty)$ and $\int_{T}^{+\infty}c(t)e^{-t}dt<+\infty$, then there exists a positive measurable function $\tilde{c}$ on $(T,+\infty)$, satisfying the following statements:
	  
	  $(1)$ $\tilde{c}\geq c$ on $(T,+\infty)$ and $\int_{T}^{+\infty}\tilde{c}(t)e^{-t}dt<+\infty$;
	  
	  $(2)$ $\tilde{c}(t)e^{-t}$ is strictly decreasing on $(T,+\infty)$ and $\tilde{c}$ is  increasing on $(a,+\infty)$, where $a>T$ is a real number.
\end{Remark}

\subsection{Necessary conditions for  linearity to hold on open Riemann surfaces}
\label{sec:4.2}
\

Follow the notations in  section \ref{sec:4.1}. Let $M=\Omega$ be an open Riemann surface which admits a nontrivial Green function $G_{\Omega}$, and $X=Z=\emptyset$. Let $\psi<0$, i.e.. $T=0$.  

Let us recall a necessary condition of $G({h}^{-1}(r))$ is linear with respect to $r$, which will be used in the proof of Proposition \ref{p:infinite}.

\begin{Lemma}[see \cite{GY-concavity}]
\label{l:n}  
Let $c\in\mathcal{P}_{0,\Omega}$, and assume that $G(t)\not\equiv0$ or $+\infty$.
If $G( {h}^{-1}(r))$ is linear with respect to $r$, then there is no function $\tilde \varphi\geq\varphi$ ($\tilde\varphi\not=\varphi$) such that $\tilde\varphi+\psi\in SH(\Omega)$ and satisfies:
 	
$(1)$  $\mathcal{I}(\tilde\varphi+\psi)=\mathcal{I}(\varphi+\psi)$ and $\lim_{t\rightarrow 0+0}\sup_{\{\psi\geq-t\}}(\tilde\varphi-\varphi)=0$;
 	
$(2)$ There is an open set $U\Subset\Omega$ such that $\sup_{\Omega\backslash U}(\tilde\varphi-\varphi)<+\infty$, $\inf_Ue^{-\tilde\varphi}c(-\psi)>0$  and $\int_{U}|F_1-F_2|^2e^{-\varphi}c(-\psi)<+\infty$ for any $F_1,F_2\in H^0(\{\psi<-t\},\mathcal{O}(K_{\Omega}))$ satisfying $\int_{\{\psi<-t\}}|F_1|^2e^{-{\varphi}}{c}(-\psi)<+\infty$ 
and $\int_{\{\psi<-t\}}|F_2|^2e^{-\tilde{\varphi}}{c}(-\psi)<+\infty$ , where $t>0$ is any small enough number satisfying $U\Subset\{\psi<-t\}$. 	
\end{Lemma}

 When $\Omega=\Delta$ be the unit disc in $\mathbb{C}$, the following lemma holds by a simple calculation (a generalized result can be seen in \cite{GMY-concavity2}).
 
\begin{Remark}
\label{r:min} Let $\psi=2a\log|z|$ and $\varphi+\psi=2\log|g|+2(k+1)\log|z|+2u$, where $g\in\mathcal{O}(\Delta)$ satisfying $g(o)\not=0$ and $u$ is a harmonic function on $\Delta$. Let $f_{u}\in\mathcal{O}(\Delta)$ satisfy $|f_u(z)|=e^{u(z)}$. Then there is a constant $c_0\not=0$ such that  $c_0gf_uz^kdz$ is the unique holomorphic $(1,0)$ form $F$ on $\Delta$ satisfying $\int_{\Delta}|F|^2e^{-\varphi}c(-\psi)=\inf\{\int_{\Delta}|\tilde{F}|^2e^{-\varphi}c(-\psi):\tilde{F}=(z^k+o(z^k))dz$ near $o$ and $\tilde{F}\in H^0(\Delta,\mathcal{O}(K_{\Delta}))$$\}$.
\end{Remark}

 We give a lemma in real analysis, which will be used in the proof of Proposition \ref{p:infinite}.
 
 \begin{Lemma}
 	\label{l:c(t)e^{-at}}
 	Let $c(t)$ be a positive measurable function on $(0,+\infty)$, and let $a\in\mathbb{R}$. Assume that $\int_{t}^{+\infty}c(s)e^{-s}ds\in(0,+\infty)$ for any large enough $t$. Then 
 	
 	$(1)$ $\lim_{t\rightarrow+\infty}\frac{\int_{t}^{+\infty}c(s)e^{-as}ds}{\int_t^{+\infty}c(s)e^{-s}ds}=1$ if and only if $a=1$;
 	
 	$(2)$ $\lim_{t\rightarrow+\infty}\frac{\int_{t}^{+\infty}c(s)e^{-as}ds}{\int_t^{+\infty}c(s)e^{-s}ds}=0$ if and only if $a>1$;
 	
 	$(3)$ $\lim_{t\rightarrow+\infty}\frac{\int_{t}^{+\infty}c(s)e^{-as}ds}{\int_t^{+\infty}c(s)e^{-s}ds}=+\infty$ if and only if $a<1$.
 \end{Lemma}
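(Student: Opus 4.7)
The plan is to prove each forward implication ($a=1\Rightarrow$ limit $1$; $a>1\Rightarrow$ limit $0$; $a<1\Rightarrow$ limit $+\infty$) by a direct pointwise comparison of the integrands, and then derive the reverse implications from the resulting trichotomy. No monotonicity or regularity beyond positivity and measurability of $c$ is needed, and the hypothesis that $\int_t^{+\infty}c(s)e^{-s}ds\in(0,+\infty)$ for $t$ large just keeps the ratio well-defined.

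First, I would handle $a=1$, which is trivial because numerator and denominator coincide. For $a>1$, the key identity is
\[
\int_t^{+\infty}c(s)e^{-as}\,ds=\int_t^{+\infty}\bigl(c(s)e^{-s}\bigr)\,e^{-(a-1)s}\,ds,
\]
and since $e^{-(a-1)s}\le e^{-(a-1)t}$ for $s\ge t$, the ratio is bounded above by $e^{-(a-1)t}$, which tends to $0$. Symmetrically, for $a<1$, I factor
\[
\int_t^{+\infty}c(s)e^{-as}\,ds=\int_t^{+\infty}\bigl(c(s)e^{-s}\bigr)\,e^{(1-a)s}\,ds\ge e^{(1-a)t}\int_t^{+\infty}c(s)e^{-s}\,ds,
\]
so the ratio is bounded below by $e^{(1-a)t}\to +\infty$.

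For the ``only if'' directions, I would argue by the trichotomy just established. If the limit equals $1$, then $a$ cannot satisfy $a>1$ (which forces the limit to $0$) nor $a<1$ (which forces the limit to $+\infty$), so $a=1$; the other two reverse directions are analogous. This exhausts the three cases of the lemma. I do not anticipate a real obstacle: the only point requiring minor care is that the hypothesis only guarantees the denominator is finite and nonzero for $t$ near $+\infty$, so all comparisons should be stated for sufficiently large $t$, which is exactly the regime in which the limits are taken.
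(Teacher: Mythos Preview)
Your proposal is correct and follows essentially the same approach as the paper: a direct pointwise comparison of $c(s)e^{-as}$ with $e^{(1-a)t}c(s)e^{-s}$ for $s\ge t$, with the reverse implications deduced from the resulting trichotomy. Your version is in fact slightly more streamlined, since you use the lower integration limit $t$ directly as the comparison threshold rather than introducing an auxiliary $s_0$ and then letting $s_0\to+\infty$ as the paper does.
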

 \begin{proof}
 	If $a=1$, it clear that $\lim_{t\rightarrow+\infty}\frac{\int_{t}^{+\infty}c(s)e^{-as}ds}{\int_t^{+\infty}c(s)e^{-s}ds}=1$.
 	
 	If $a>1$, then $c(s)e^{-as}\le e^{(1-a)s_0} c(s)e^{-s}$ for $s\ge s_0>0$, which implies that $\limsup\limits_{t\rightarrow+\infty}\frac{\int_{t}^{+\infty}c(s)e^{-as}ds}{\int_t^{+\infty}c(s)e^{-s}ds}\le e^{(1-a)s_0}$. Let $s_0\rightarrow+\infty$, we have $\lim\limits_{t\rightarrow+\infty}\frac{\int_{t}^{+\infty}c(s)e^{-as}ds}{\int_t^{+\infty}c(s)e^{-s}ds}=0.$
 	
 	If $a<1$, then $c(s)e^{-as}\ge e^{(1-a)s_0} c(s)e^{-s}$ for $a>s_0>0$, which implies that $\liminf\limits_{t\rightarrow+\infty}\frac{\int_{t}^{+\infty}c(s)e^{-as}ds}{\int_t^{+\infty}c(s)e^{-s}ds}\ge e^{(1-a)s_0}$. Letting $s_0\rightarrow+\infty$, $\lim\limits_{t\rightarrow+\infty}\frac{\int_{t}^{+\infty}c(s)e^{-as}ds}{\int_t^{+\infty}c(s)e^{-s}ds}=+\infty.$
 \end{proof}

 Let $Z_0'$, $z_j$, $w_j$, $V_{z_j}$, $V_0$, $\varphi$ and $\psi$ be as in Proposition \ref{p:extension}.
 Let $c(t)\in\mathcal{P}_{0,\Omega}$ satisfy $\int_{0}^{+\infty}c(s)e^{-s}ds<+\infty$, and let $\mathcal{F}_{z_j}\supset\mathcal{I}(\varphi+\psi)_{z_j}$ be an ideal of $\mathcal{O}_{z_j}$ for any $1\le j<\gamma$. Let $f\in H^0(V_0,\mathcal{O}(K_{\Omega}))$ be the $(1,0)$ form  in the definition of $G(t)$.
 
Now, we prove a necessary condition for $G(h^{-1}(r))$ is linear.

\begin{Proposition}
	\label{p:infinite}
	Assume that $G(0)\in(0,+\infty)$ and $(\psi-2p_jG_{\Omega}(\cdot,z_j))(z_j)>-\infty$ for any $j$, where $p_j=\frac{1}{2}\nu(dd^c\psi,z_j)>0$. If $G(h^{-1}(r))$ is linear with respect to $r$, then 
	
	$(1)$ $\psi=2\sum_{1\le j<\gamma}p_jG_{\Omega}(\cdot,z_j)$;

	$(2)$ $\varphi+\psi=2\log|g|$ and $\mathcal{F}_{z_j}=\mathcal{I}(\varphi+\psi)_{z_j}$ for any $j$, where $g\in\mathcal{O}(\Omega)$ such that $ord_{z_j}(g)=ord_{z_j}(f)+1$ for any $j$;
	
	$(3)$ There exists $c_0\in\mathbb{C}\backslash\{0\}$ such that $\frac{p_j}{ord_{z_j}g}\lim_{z\rightarrow z_j}\frac{dg}{f}=c_0$ for any $j$;
	
	$(4)$ $\sum_{1\le j<\gamma}p_j<+\infty$.
\end{Proposition}
\begin{proof}
	As $\varphi+\psi$ is subharmonic on $\Omega$,  we have
	\begin{equation}
		\label{eq:211007a}
		\varphi+\psi=2\log|g_0|+2u,
	\end{equation}
	where $g\in\mathcal{O}(\Omega)$ and $u\in SH(\Omega)$  such that $\nu(dd^cu,z)\in[0,1)$ for any $z\in\Omega$.
	It follows from  Siu's Decomposition Theorem and Lemma \ref{l:green-sup2} that
	\begin{equation}
		\label{eq:211007b}
		\psi=2\sum_{1\le j<\gamma}p_jG_{\Omega}(\cdot,z_j)+\psi_2,
	\end{equation}
	where $\psi_2\in SH^-(\Omega)$ satisfying $\psi(z_j)>-\infty$ for any $j$.
	
	We prove the proposition in four steps. 

\
	
	\emph{\textbf{Step 1.} $\psi=2\sum_{1\le j<\gamma}p_jG_{\Omega}(\cdot,z_j)$}

	Since $\Omega$ is a Stein manifold and $u,\psi_2\in SH(\Omega)$,  there are  $u_l,\Psi_l\in SH(\Omega)\cap C^{\infty}(\Omega)$  decreasingly convergent to $u$  and  $\psi_2$ with respect to $l$ (see \cite{FN80}), respectively.
	
	As $G(h^{-1}(r))$ is linear,  Proposition \ref{c:linear} shows that there exists an $F\in H^0(\Omega,\mathcal{O}(K_{\Omega}))$ such that $(F-f,z_j)\in(\mathcal{O}(K_{\Omega}))_{z_j}\otimes\mathcal{F}_{z_j}$ for any $1\le j<\gamma$ and 	for any $t\ge0$,
	\begin{equation}
		\nonumber
		G(t)=\left(\int_{t}^{+\infty}c(s)e^{-s}ds\right)\int_{\{\psi<-t\}}|F|^2e^{-\varphi}c(-\psi)
	\end{equation}
 By Remark \ref{l:c'} and Proposition \ref{c:linear},  assume that $c$ is increasing near $+\infty$ without loss of generality.
	
	Denote $\mathcal{G}:=2\sum_{1\le j<\gamma}p_jG_{\Omega}(\cdot,z_j)$. Combining $\psi_2\le\Psi_l$, $u\le u_l$, $c$ is increasing near $+\infty$, equality \eqref{eq:211007a} and equality \eqref{eq:211007b}, we obtain that there exists $t_1>0$ such that for any $t>t_1$,
	\begin{equation}
		\label{eq:211007d}\begin{split}
			\int_{\{\psi<-t\}}|F|^2e^{-\varphi}c(-\psi)
			=&\int_{\{\mathcal{G}+\psi_2<-t\}}	|F|^2e^{-2\log|g_0|-2u+\mathcal{G}+\psi_2}c(-\mathcal{G}-\psi_2)	\\
			\ge&\int_{\{\mathcal{G}+\Psi_l<-t\}}	|F|^2e^{-2\log|g_0|-2u_l+\mathcal{G}+\psi_2}c(-\mathcal{G}-\Psi_l).	
	\end{split}\end{equation}

	For any $\epsilon>0$ and any $m$ $(1\le m<\gamma+1)$, there exists $s_0>0$ satisfying that:
	
	$(1)$ $\{|w_j(z)|<s_0:z\in V_{z_j}\}\cap\{|w_k(z)|<s_0:z\in V_{z_k}\}=\emptyset$ for any $j\not=k$, Denote that $U_j:=\{|w_j(z)|<s_0:z\in V_{z_j}\}$;
	
	$(2)$  $\sup_{z\in U_j}|u_l(z)-u_l(z_j)|<\epsilon$ for any $j\in\{1,2,\ldots,m\}$;
	
	$(3)$ $\sup_{z\in U_j}|h_j(z)-h_j(z_j)|<\epsilon$ for any $j\in\{1,2,\ldots,m\}$, where $h_j:=\mathcal{G}-2p_j\log|w_j|+\Psi_l+\epsilon$ are  smooth functions on $U_j$;
	
	$(4)$ there exists a  $\tilde{g}_j\in\mathcal{O}(\cup_{1\le j\le m}U_j)$ such that  $|\tilde{g}_j|^2=e^{\frac{\mathcal{G}}{p_j}}$.
	
	Note that $\mathcal{G}+\Psi_l\le 2p_j\log|w_j|+h_j(z_j)$ on $U_j$ and $ord_{z_j}\tilde{g}_j=1$ for any $j$. It follows from Lemma \ref{l:G-compact} that there exists $t_2>t_1$ such that $\{\mathcal{G}+\Psi_l<-t_2\}\cap (\cup_{1\le j\le m}V_{z_j}) \Subset \cup_{1\le j\le m} U_j$.  Then inequality \eqref{eq:211007d} becomes that
	\begin{equation}
		\label{eq:211007f}
		\begin{split}
			\int_{\{\psi<-t\}}|F|^2e^{-\varphi}c(-\psi)
			\ge& \sum_{1\le j\le m}\int_{\{\mathcal{G}+\Psi_l<-t\}\cap U_j}	|F|^2e^{-2\log|g_0|-2u_l+\mathcal{G}+\psi_2}c(-\mathcal{G}-\Psi_l)\\
			\ge& \sum_{1\le j\le m}\int_{\{2p_j\log|w_j|+h_j(z_j)<-t\}\cap U_j}	|F|^2|\tilde{g}_j|^{2p_j}\\
			&\cdot e^{-2\log|g_0|-2u_l(z_j)-\epsilon+\psi_2}c(-2p_j\log|w_j|-h_j(z_j)).
		\end{split}
	\end{equation}
	Let $F=d_{1,j}w_j^{k_{1,j}}h_{1,j}dw_{j}$, $g_0=d_{2,j}w_{j}^{k_{2,j}}h_{2,j}$ and $\tilde{g}_j=d_{3,j}w_{j}h_{3,j}$ on $U_j$, where $d_{i,j}\not=0$ are constants, $k_{1,j},k_{2,j}\in\mathbb{Z}_{\ge0}$ , and $h_{i,j}\in\mathcal{O}(U_j)$ such that $h_{i,j}(z_j)=1$ for $i\in\{1,2,3\}$ and $j\in\{1,2,\ldots,m\}$. Then inequality \eqref{eq:211007f} implies
	\begin{equation}
		\label{eq:211024a}\begin{split}
			&\int_{\{\psi<-t\}}|F|^2e^{-\varphi}c(-\psi)\\
			\ge
			&\sum_{1\le j\le m}\bigg(\frac{2|d_{1,j}|^2|d_{3,j}|^{2p_j}e^{-2u_l(z_j)-\epsilon}}{|d_{2,j}|^2}\\
			&\times\int_{0}^{2\pi}\int_{0}^{e^{-\frac{t+h_{j}(z_j)}{2p_j}}}|r|^{2(k_{1,j}+p_j-k_{2,j})+1}|\frac{h_{1,j}}{h_{2,j}}|^2|h_{3,j}|^{2p_j}e^{\psi_2}c(-2p_j\log r-h_{j}(z_j))drd\theta\bigg)\\
			\ge&\sum_{1\le j\le m}\bigg(|\frac{d_{1,j}}{d_{2,j}}|^2|d_{3,j}|^{2p_j}\frac{2\pi}{p_j} e^{-2u_l(z_j)-\epsilon+\psi_{2}(z_j)-\frac{k_{1,j}+1-k_{2,j}+p_j}{p_j}h_{j}(z_j)}\\
			&\times\int_{t}^{+\infty}c(s)e^{-(\frac{k_{1,j}+1-k_{2,j}}{p_j}+1)s}ds\bigg).
		\end{split}
	\end{equation}
	Denote that $I_{0}:=\{1\le j<\gamma:ord_{z_j}F+1-ord_{z_j}g_0\le0\}$ and $I_m:=\{j\le m:j\in I_0\}$.  Note that for any $t\ge0$, $$\frac{\int_{\{\psi<-t\}}|F|^2e^{-\varphi}c(-\psi)}{\int_t^{+\infty}c(s)e^{-s}ds}=\frac{G(t)}{\int_t^{+\infty}c(s)e^{-s}ds}=\frac{G(0)}{\int_0^{+\infty}c(s)e^{-s}ds}\in(0,+\infty)$$ 
	By Lemma \ref{l:c(t)e^{-at}} and inequality \eqref{eq:211024a}, we have
	\begin{equation}
		\label{eq:241203d} k_{1,j}+1-k_{2,j}=0
	\end{equation}
 for any $j\in I_m$  and
	\begin{equation}\nonumber
		\begin{split}
			&\frac{\int_{\{\psi<-t\}}|F|^2e^{-\varphi}c(-\psi)}{\int_t^{+\infty}c(s)e^{-s}ds}\\
			\ge&\sum_{1\le j\le m}\bigg(|\frac{d_{1,j}}{d_{2,j}}|^2|d_{3,j}|^{2p_j}\frac{2\pi}{p_j} e^{-2u_l(z_j)-\epsilon+\psi_{2}(z_j)-\frac{k_{1,j}+1-k_{2,j}+p_j}{p_j}h_{j}(z_j)}\\
			&\times\lim_{t\rightarrow+\infty}\frac{\int_{t}^{+\infty}c(s)e^{-(\frac{k_{1,j}+1-k_{2,j}}{p_j}+1)s}ds}{\int_t^{+\infty}c(s)e^{-s}ds}\bigg)\\
			=&\sum_{j\in I_m}|\frac{d_{1,j}}{d_{2,j}}|^2|d_{3,j}|^{2p_j}\frac{2\pi}{p_j} e^{-2u_l(z_j)-\epsilon+\psi_{2}(z_j)-h_{j}(z_j)}.
		\end{split}
	\end{equation}
	Since $h_j(z_j)=\Psi_l(z_j)+\epsilon+\lim_{z\rightarrow z_j}(\mathcal{G}-2p_j\log|w_j|)=\Psi_l(z_j)+\epsilon+\log(\lim_{z\rightarrow z_j}\frac{|\tilde{g}|^{2p_j}}{|w_j|^{2p_j}})=\Psi_l(z_j)+\epsilon+2p_j\log|d_{3,j}|$, letting $\epsilon\rightarrow0$, we have 
	\begin{equation}
		\nonumber
		\begin{split}
			\frac{\int_{\{\psi<-t\}}|F|^2e^{-\varphi}c(-\psi)}{\int_t^{+\infty}c(s)e^{-s}ds}
			\ge\sum_{j\in I_m}\frac{|d_{1,j}|^2}{p_j|d_{2,j}|^2}2\pi e^{-2u_l(z_j)+\psi_{2}(z_j)-\Psi_l(z_j)}.
		\end{split}
	\end{equation}
	Letting $l\rightarrow+\infty$ and $m\rightarrow\gamma+1$, we have
	\begin{equation}
		\label{eq:211101a}
		\frac{G(0)}{\int_0^{+\infty}c(s)e^{-s}ds}=\frac{\int_{\{\psi<-t\}}|F|^2e^{-\varphi}c(-\psi)}{\int_t^{+\infty}c(s)e^{-s}ds}
		\ge\sum_{j\in I_0}\frac{|d_{1,j}|^2}{p_j|d_{2,j}|^2}2\pi e^{-2u(z_j)}.
	\end{equation}
	Then we have $u(z_j)>-\infty$  for $j\in I_0$.
	Note that $(w_j^{k_{2,j}})_o\in\mathcal{I}(\varphi+\psi)_{z_j}\subset\mathcal{F}_{z_j}$ for any $j$. It follows from Proposition \ref{p:extension} that
	\begin{equation}
		\label{eq:211009f}G(0)\le\left(\int_0^{+\infty}c(s)e^{-s}ds\right)\sum_{j\in I_0}\frac{|d_{1,j}|^2}{p_j|d_{2,j}|^2}2\pi e^{-2u(z_j)}.
	\end{equation}
	Combining inequality \eqref{eq:211101a} and \eqref{eq:211009f}, we have
	\begin{equation}
		\label{eq:211009g}
		G(0)=\int_{\Omega}|F|^2e^{-\varphi}c(-\psi)=\left(\int_0^{+\infty}c(s)e^{-s}ds\right)\sum_{j\in I_0}\frac{|d_{1,j}|^2}{p_j|d_{2,j}|^2}2\pi e^{-2u(z_j)}.
	\end{equation}
	
	Denote $\tilde{\psi}:=2\sum_{1\le j<\gamma}p_jG_{\Omega}(\cdot,z_j)$ and $\tilde{\varphi}:=\varphi+\psi-\tilde\psi$. As $c(t)e^{-t}$ is decreasing and $\tilde{\psi}\ge\psi$, by Proposition \ref{p:extension}, there exists  $\tilde{F}\in H^0(\Omega,\mathcal{O}(K_{\Omega}))$ such that $\tilde{F}=F+o(w_j^{k_{2,j}-1})dw_j$ near $z_j$ for any $j$ and
	\begin{equation}
		\label{eq:211009i}\begin{split}
			\int_{\Omega}|\tilde{F}|^2e^{-{\varphi}}c(-{\psi})
			\le&\int_{\Omega}|\tilde{F}|^2e^{-\tilde{\varphi}}c(-\tilde{\psi})\\
			\le&\left(\int_0^{+\infty}c(s)e^{-s}ds\right)\sum_{j\in I_0}\frac{|d_{1,j}|^2}{p_j|d_{2,j}|^2}2\pi e^{-2u(z_j)}.		
	\end{split}\end{equation}
	Following from $\int_{\Omega}|\tilde{F}|^2e^{-{\varphi}}c(-{\psi})\ge G(0)$,    equality \eqref{eq:211009g} and \eqref{eq:211009i}, we have
	\begin{displaymath}
		\int_{\Omega}|\tilde{F}|^2e^{-{\varphi}}c(-{\psi})
		=G(0)=\int_{\Omega}|\tilde{F}|^2e^{-\tilde{\varphi}}c(-\tilde{\psi})>0.
	\end{displaymath}
Using Lemma \ref{l:psi=G}, we have $\psi=2\sum_{1\le j<\gamma}p_jG_{\Omega}(\cdot,z_j)$.

\	
	
	\emph{\textbf{Step 2.} $u$ is harmonic, $\mathcal{F}_{z_j}=\mathcal{I}(\varphi+\psi)_{z_j}$ and $ord_{z_j}g_0=ord_{z_j}f+1$ for any $j$.}

	Following equality \eqref{eq:241203d},  we have $ord_{z_j}F-ord_{z_j}g_0\ge-1$ for any $1\le j<\gamma$.
	We prove $ord_{z_j}F-ord_{z_j}g_0=-1$ and $\mathcal{F}_{z_j}=\mathcal{I}(\varphi+\psi)_{z_j}$ for any $j$ by contradiction: if not, there exists a $j_0$ such that $ord_{z_{j_0}}F-ord_{z_{j_0}}g_0\ge0$ or $\mathcal{I}(\varphi+\psi)_{z_{j_0}}\subsetneqq\mathcal{F}_{z_{j_0}}$, then $$(0-F,z_{j_0})\in(\mathcal{O}(K_{\Omega}))_{z_{j_0}}\otimes\mathcal{F}_{z_{j_0}}.$$ There exists $r_0>0$ such that $U_0=\{|w_{j_0}(z)|<r_0:z\in V_{j_0}\}\Subset V_{j_0}$.  As $\psi=2\sum_{1\le j<\gamma}G_{\Omega}(\cdot,z_j)$, there exists $s_0>0$ such that $\{\psi<-s_0\}\cap \partial U_0=\emptyset$. Let $\tilde{F}:=\left\{ \begin{array}{lcl}
		F & \mbox{on}& \Omega\backslash\overline{U_0}\\
		0 & \mbox{on}& U_0
	\end{array} \right.$ be a holomorphic $(1,0)$ form on $\Omega\backslash\partial U_0$, which satisfies that $(\tilde{F}-F,z_{j})\in(\mathcal{O}(K_{\Omega}))_{z_{j}}\otimes\mathcal{F}_{z_{j}}$ for any $j$.
	Then we have $G(s_0)\le\int_{\{\psi<-s_0\}}|\tilde{F}|^2e^{-\varphi}c(-\psi)\le\int_{\{\psi<-s_0\}}|{F}|^2e^{-\varphi}c(-\psi)=G(s_0)$, which implies that
	\begin{equation}
		\label{eq:211010a}
		\int_{\{\psi<-s_0\}}|\tilde{F}|^2e^{-\varphi}c(-\psi)=\int_{\{\psi<-s_0\}}|{F}|^2e^{-\varphi}c(-\psi).
	\end{equation}
	As $F\in H^0(\Omega,\mathcal{O}(K_{\Omega}))$,  equality \eqref{eq:211010a} shows that $F\equiv0$, which contradicts to $G(0)=\int_{\Omega}|F|^2e^{-\varphi}c(-\psi)>0$. Thus, we have $ord_{z_j}f=ord_{z_j}F=ord_{z_j}g_0-1$ and $\mathcal{F}_{z_j}=\mathcal{I}(\varphi+\psi)_{z_j}$ for any $j$ by $(f-F,z_{j})\in(\mathcal{O}(K_{\Omega}))_{z_{j}}\otimes\mathcal{F}_{z_{j}}=(\mathcal{O}(K_{\Omega})\otimes\mathcal{I}(2\log|g_0|))_{z_{j}}$, which implies that $I_0=\{j\in\mathbb{Z}:1\le j<\gamma\}$.

	Now, we assume that $u$ is not harmonic to get a contradiction. There exists $p\in{\Omega}$ such that $u$ is not harmonic on any neighborhood of $p$.
	
	If $p\in\Omega\backslash Z_0'$, let $U$ be an open subset of $\Omega$ and $t_0>t_1>0$ such that $p\in U\Subset \{\psi<-t_1\}\backslash \overline{\{\psi<-t_0\}}$. Then there exists a closed  positive $(1,1)$ current $T\not\equiv0$, such that $supp T\Subset U$ and $T\leq i\partial\bar\partial u$.
	By Lemma \ref{l:cu}, there exists $\Phi\in SH(\Omega)$, which satisfies the following properties: $i\partial\bar\partial\Phi\leq T$ and $i\partial\bar\partial\Phi\not\equiv0$; $\lim_{t\rightarrow0+0}(\inf_{\{G_{\Omega}(z,z_1)\geq-t\}}\Phi(z))=0$, which implies that $\lim_{t\rightarrow0+0}(\inf_{\{\psi\geq-t\}}\Phi(z))=0$; $supp (i\partial\bar\partial\Phi)\subset U$ and $\inf_{\Omega\backslash U}\Phi>-\infty$.
	
	Take $\tilde\varphi=\varphi-\Phi$, then $\tilde\varphi=2\log|g_0|-2\sum_{1\le j<\gamma}p_jG_{\Omega}(\cdot,z_j)+2u-\Phi$ is subharmonic on a neighborhood of $\overline{U}$. It is clear that $\tilde\varphi\geq\varphi$, $\inf_{\Omega\backslash U}(\varphi-\tilde\varphi)=\inf_{\Omega\backslash U}\Phi>-\infty$, $\tilde\varphi+\psi\in SH(\Omega)$, $\mathcal{I}(\tilde\varphi+\psi)=\mathcal{I}(\varphi+\psi)=\mathcal{I}(2\log|g_0|)$.
	As $\tilde\varphi\in SH(\Omega)$ and $\inf_{\overline{U}}>0$. Note that for any $z\in \overline{U}$, $$\mathcal{I}(\varphi)_z=\mathcal{I}(2\log|g_0|)_z=\mathcal{I}(\tilde{\varphi})_z\text{ and } e^{-\tilde{\varphi}}c(-\psi)(z)\ge(\inf_{t\in[t_1,t_0]}c(t))e^{-\tilde{\varphi}}(z).$$
	 For any  $F_1,F_2\in H^0(\{\psi<-t\},\mathcal{O}(K_{\Omega}))$ satisfying $\int_{\{\psi<-t\}}|F_1|^2e^{-\tilde{\varphi}}c(-\psi)<+\infty$ and $\int_{\{\psi<-t\}}|F_2|^2e^{-\varphi}c(-\psi)<+\infty$, where $U\Subset\{\psi<-t\}$,
	 we have
	$$(F_i,z)\in\mathcal({O}(K_{\Omega}))_z\otimes\mathcal{I}(\tilde{\varphi})_z=({O}(K_{\Omega}))_z\otimes\mathcal{I}({\varphi})_z$$
	for any $z\in\overline{U}$ and $i=1,2$, which implies that
	\begin{equation*}
			\int_{U}|F_1-F_2|^2e^{-\varphi}c(-\psi)
			\le(\sup_{t\in[t_1,t_0]}c(t))\int_{U}|F_1-F_2|^2e^{-\varphi}<+\infty.
	\end{equation*}
Then $\tilde\varphi$ satisfies all conditions in Lemma \ref{l:n}, which is a contradiction.
	
	If $p\in Z_0'$ (without loss of generality, we can assume $p=z_1$), there exists $s_1>0$ such that $\{\psi<-s_1\}\cap\partial U_1=\emptyset$, where $U_1=\{|w_1(z)|<r_1:z\in V_{z_1}\}\Subset V_{z_1}$ is a neighborhood of $z_1$. For $l=1,2$, denote that
	\begin{equation*}
		\begin{split}
			G_l(t)= \inf\bigg\{\int_{\{\psi<-t\}\cap D_l}|\tilde{f}|^{2}e^{-\varphi}&c(-\psi):\tilde{f}\in H^{0}(\{\psi<-t\}\cap D_l,\mathcal{O}(K_{\Omega}))  \\&\&{\,}(\tilde{f}-f,z_j)\in(\mathcal{O}(K_{\Omega}))_{z_j}\otimes\mathcal{F}_{z_j}\,for\,any\,z_j\in D_l\bigg\},
		\end{split}
	\end{equation*}
	where $t\ge s_1$, $D_1=\{\psi<-s_1\}\cap U_1$ and $D_2=\{\psi<-s_1\}\backslash\overline{U_1}$. Theorem \ref{maintheorem} shows that $G_l(h^{-1}(r))$  is concave on $(0,\int_{s_1}^{+\infty}c(s)e^{-s}ds)$. Since $G(t)=G_1(t)+G_2(t)$  and $G(h^{-1}(r))$ is linear, we have $G_l(h^{-1}(r))$ is linear on $(0,\int_{s_1}^{+\infty}c(s)e^{-s}ds)$ for $l=1,2$. 
	Replacing $c$ by $1$ in the definition of $G_1(t)$, we define  minimal $L^2$ integrals $\tilde{G}_1(t)$ for $t\ge s_1$.
	Note that $\frac{1}{2p_1}(\psi+s_1)$ is the Green function $G_{D_1}(\cdot,z_1)$ on $D_1$. Combining Proposition \ref{c:linear}, Remark \ref{r:c} and the linearity of $G_1(h^{-1}(r))$, we have $\tilde{G}_1(-\log r)$ is linear on $(0,e^{-s_1})$. 
	
	Similarly, 
there exist an open subset $U\Subset D_1$ ($p\in U$) and $\Phi\in SH^-(D_1)$  such that  that $\lim_{t\rightarrow s_1+0}(\inf_{\{z\in D_1:\psi(z)\geq-t\}}\Phi(z))=0$, $0\not=i\partial\bar\partial\Phi\le i\partial\bar\partial u$ and $supp (i\partial\bar\partial\Phi)\subset U$ and $\inf_{D_1\backslash U}\Phi>-\infty$.
	Without loss of generality, we can assume that $p_1=1$ by the following remark. 
\begin{Remark}
	\label{r:c'}
	Let $\tilde{\varphi}=\varphi+a\psi$, $\tilde{c}(t)=c(\frac{t}{1-a})e^{-\frac{at}{1-a}}$ and $\tilde{\psi}=(1-a)\psi$ for some $a\in(-\infty,1)$. It is clear that $e^{-\tilde{\varphi}}c(-\tilde{\psi})=e^{-\varphi}c(-\psi)$, $(1-a)\int_{t}^{+\infty}c(s)e^{-s}ds=\int_{(1-a)t}^{+\infty}\tilde{c}(s)e^{-s}ds$ and $G(t;\varphi,\psi,c)=G((1-a)t;\tilde{\varphi},\tilde{\psi},\tilde{c})$.
\end{Remark}

	Take 
	$\tilde\varphi=\varphi-\Phi,$
	then $\tilde\varphi=2\log|g_0|-2\sum_{1\le j<\gamma}p_jG_{\Omega}(\cdot,z_j)+2u-\Phi\in SH(D_1)$. It is clear that $\tilde\varphi\geq\varphi$, $\inf_{D_1\backslash U}(\varphi-\tilde\varphi)=\inf_{D_1\backslash U}\Phi>-\infty$, $\tilde\varphi+\psi\in SH(D_1)$, $\inf_{\overline{U}}e^{-\tilde\varphi}>0$ and $\mathcal{I}(\tilde\varphi+\psi)_z=\mathcal{I}(\varphi+\psi)_z=\mathcal{I}(2\log|g_0|)_z$ for any $z\in D_1$.
 Note that $\mathcal{I}(\varphi)|_z=\mathcal{I}(\tilde\varphi)|_{z}=\mathcal{I}(2\log|g_0|-2G_{\Omega}(\cdot,z_j))|_z$ for any $z\in\overline{U}$, then
	$$\int_{U}|F_1-F_2|^2e^{-\varphi}<+\infty$$
	for any  $F_1,F_2\in H^0(\{\psi<-t\},\mathcal{O}(K_{\Omega}))$ satisfying $\int_{\{\psi<-t\}\cap D_1}|F_1|^2e^{-\tilde{\varphi}}<+\infty$ and $\int_{\{\psi<-t\}\cap D_1}|F_2|^2e^{-\varphi}<+\infty$, where $U\Subset\{\psi<-t\}\cap D_1$. Then $\tilde\varphi$ satisfies all conditions in Lemma \ref{l:n}, which  contradicts to that $\tilde{G}_1(-\log r)$ is  linear on $(0,\int_{s_1}^{+\infty}c(s)e^{-s}ds)$.
	
	Thus, $u$ is harmonic on $\Omega$.

	\

	\emph{\textbf{Step 3.} $\varphi+\psi=2\log|g|$ and $\lim_{z\rightarrow z_j}\frac{f}{dg}=c_0\in\mathbb{C}\backslash{0}$ for any $j$.}

	We follow the notations $D_l$,  $s_1$ and $G_l$ in Step 2, where $l=1,2$. Then $G(t)=G_1(t)+G_2(t)$  and  $G_l(h^{-1}(r))$ is linear on $(0,\int_{s_1}^{+\infty}c(s)e^{-s}ds)$ for $l=1,2$. 
	 Note that $D_1$ is  simply connected and $\frac{\psi+s_1}{2p_{1}}|_{D_1}$ is the Green function $G_{D_1}(\cdot,z_1)$ on $D_1$.
 There exist  $H_1,H_2\in\mathcal{O}(D_1)$  such that $|H_1|=e^{\frac{\psi+s_1}{2p_{1}}}$ and $|H_2|=e^{u}$ on $D_1$.
	Then we have
	\begin{displaymath}
		\begin{split}
			\varphi+\psi=2\log\left|\frac{g_0}{H_1^{ord_{z_1}f+1}}\right|+2(ord_{z_1}f+1)\frac{\psi+s_1}{2p_{1}}+2u
		\end{split}
	\end{displaymath}
	on $D_1$, where  $\frac{g_0}{H_1^{ord_{z_1}f+1}}$ is holomorphic  on $D_1$.
	
	Let $\tilde{p}:\Delta\rightarrow\Omega\backslash\{z_j:1<j<\gamma\}$ be the universal covering from unit disc $\Delta$ to $\Omega\backslash Z_0'$, and let $\tilde{D}_1$ be an open subset of $\Delta$ such that $\tilde{p}|_{\tilde{D}_1}$ is a conformal map from $\tilde{D}_1$ to $D_1$.
	Thus, Remark \ref{r:min} ($g\sim \frac{g_0}{H_1^{ord_{z_1}f+1}}$ and $\log|z|\sim \frac{\psi+s_1}{2p_{1}}$, here `$\sim$' means that the former is replaced by the latter) shows that
	\begin{equation}\label{eq:211024b}
		\begin{split}
			F=&\tilde{c}_0\frac{g_0}{H_1^{ord_{z_1}f+1}}H_2H_1^{ord_{z_j}f}dH_1
			=\tilde{c}_0g_0\frac{dH_1}{H_1}H_2\\
			=&\tilde{c}_1g_0(\tilde{p}|_{\tilde{D}_1})_*(\frac{d\tilde{H}_1}{\tilde{H}_1}\tilde{H}_2)
			=\tilde{c}_1g_0(\tilde{p}|_{\tilde{D}_1})_*(\frac{d\tilde{H}_1}{\tilde{H}_1})(\tilde{p}|_{\tilde{D}_1})_*(\tilde{H}_2)
		\end{split}
	\end{equation}
	on $D_1$, where  $\tilde{c}_{l}\not=0$ is a constant for $l=0,1$, and $\tilde{H}_1, \tilde{H}_2\in\mathcal{O}(\Delta)$  satisfying $|\tilde{H}_1|=\tilde{p}^*e^{\frac{\psi+s_1}{2p_{1}}}$ ($\frac{\psi+s_1}{2p_{1}}$ is harmonic on $\Omega\backslash Z_0'$) and $|\tilde{H}_2|=\tilde{p}^*e^{u}$. Equality \eqref{eq:211024b} shows that
	\begin{equation}
		\label{eq:211101d}\tilde{p}^*({F})=\tilde{c}_1\tilde{p}^*(g_0)\frac{d\tilde{H}_1}{\tilde{H}_1}\tilde{H}_2
	\end{equation}
	on $\Delta$.
	As $g_0\tilde{p}_*(\frac{d\tilde{H}_1}{\tilde{H}_1})$ is a (single-value) holomorphic $(1,0)$ form on $\Omega\backslash \{z_j:1<j<\gamma\}$, it follows from equality \eqref{eq:211101d} that $\tilde{p}_*(\tilde{H}_2)$ is a (single-value) holomorphic function on $\Omega\backslash \{z_j:1<j<\gamma\}$ satisfying $|\tilde{p}_*(\tilde{H}_2)|=e^{u}$. Thus, 
	there exists a  $g_1\in\mathcal{O}(\Omega)$ such that $|g_1|=e^{u}$. Let $g=g_0g_1$, then we have $\varphi+\psi=2\log|g|$. $ord_{z_j}g_0=ord_{z_j}f+1$ shows that $ord_{z_j}g=ord_{z_j}f+1$ for any $1\le j<\gamma$.
	
	Fixed $j$ $(1<j<\gamma)$, there exists a simple connected open subset $D_3$ of $\Omega$ such that $z_1,z_j\in D_3$ and $z_k\not\in D_3$ for any $k\not=1,j$. There exist   $\tilde{f}_{z_1},\tilde{f}_{z_j},H_3\in\mathcal{O}(D_3)$  satisfying $|\tilde{f}_{z_1}|=e^{G_{\Omega}(\cdot,z_1)}$,  $|\tilde{f}_{z_j}|=e^{G_{\Omega}(\cdot,z_j)}$ and  $|H_3|=e^{\frac{1}{p_1}\sum_{k\not=1,j}p_kG_{\Omega}(\cdot,z_k)}$. Without loss of generality, we can assume $D_1\subset D_3$. Then we have
	\begin{equation}
		\label{eq:211102a}\begin{split}
			F=\tilde{c}_2g\frac{d{H}_1}{{H}_1}=\tilde{c}_3g\frac{d(\tilde{f}_{z_1}\tilde{f}_{z_j}^{\frac{p_j}{p_1}}H_3)}{\tilde{f}_{z_1}\tilde{f}_{z_j}^{\frac{p_j}{p_1}}H_3}=\tilde{c}_3g\left(\frac{d(\tilde{f}_{z_1}H_3)}{\tilde{f}_{z_1}H_3}+\frac{p_jd\tilde{f}_{z_j}}{p_1\tilde{f}_{z_j}}\right)
	\end{split}\end{equation}
	on $D_1$, where $\tilde{c}_2\not=0$ and $\tilde{c}_3\not=0$ are constants. As $\tilde{f}_{z_1},\tilde{f}_{z_j},H_3\in\mathcal{O}(D_3)$,  equality \eqref{eq:211102a} holds on $D_3$.
	
	As $(F-f,z_k)\in(\mathcal{O}(K_{\Omega})\otimes\mathcal{I}(\varphi+\psi))_{z_k}$, $\varphi+\psi=2\log|g|$ and $ord_{z_k}g=ord_{z_k}f+1$ for any $1\le k<\gamma$,  we have
		$\lim_{z\rightarrow z_1}\frac{F}{f}=\lim_{z\rightarrow z_j}\frac{F}{f}=1$.
	Note that
	\begin{equation}
		\nonumber\begin{split}
			\lim_{z\rightarrow z_1}\frac{g\left(\frac{d(\tilde{f}_{z_1}H_3)}{\tilde{f}_{z_1}H_3}+\frac{p_jd\tilde{f}_{z_j}}{p_1\tilde{f}_{z_j}}\right)}{f}	=\lim_{z\rightarrow z_1}\frac{g\frac{d(\tilde{f}_{z_1}H_3)}{\tilde{f}_{z_1}H_3}}{f}
			=\frac{1}{ord_{z_1}g}\lim_{z\rightarrow z_1}\frac{dg}{f}
	\end{split} \end{equation}
	and
	\begin{equation}
		 \nonumber\begin{split}
			\lim_{z\rightarrow z_j}\frac{g\left(\frac{d(\tilde{f}_{z_1}H_3)}{\tilde{f}_{z_1}H_3}+\frac{p_jd\tilde{f}_{z_j}}{p_1\tilde{f}_{z_j}}\right)}{f}	=\lim_{z\rightarrow z_j}\frac{g\frac{p_jd\tilde{f}_{z_j}}{p_1\tilde{f}_{z_j}}}{f}	
			=\frac{p_j}{p_1ord_{z_j}g}\lim_{z\rightarrow z_j}\frac{dg}{f}.
		\end{split}
	\end{equation}	
	Hence,
	\begin{displaymath}
		\frac{p_j}{ord_{z_j}g}\lim_{z\rightarrow z_j}\frac{dg}{f}=\frac{p_1}{ord_{z_1}g}\lim_{z\rightarrow z_1}\frac{dg}{f}
	\end{displaymath}
	for any $1\le j<\gamma$, which implies statement $(3)$ holds. 
	
	\

	\emph{\textbf{Step 4.} $\sum_{1\le j<\gamma}p_j<+\infty$.}

	Note that $ord_{z_j}f=ord_{z_j}g_0-1$ for any $j$ implies $I_0=Z_0'$. Equality \eqref{eq:211009g} shows
	\begin{equation}
		\label{eq:211102e}G(0)=\int_{\Omega}|F|^2e^{-\varphi}c(-\psi)=\left(\int_0^{+\infty}c(s)e^{-s}ds\right)\sum_{1\le j<\gamma}\frac{|d_{1,j}|^2}{p_j|d_{2,j}|^2}2\pi e^{-2u(z_j)}.
	\end{equation}
	Note that
	\begin{equation}\begin{split}
			\label{eq:211102f}|\frac{p_j}{ord_{z_j}g}\lim_{z\rightarrow z_j}\frac{dg}{f}|= |\frac{p_j}{ord_{z_j}g}\lim_{z\rightarrow z_j}\frac{dg}{F}|
			=|\frac{p_j}{ord_{z_j}g_0}\lim_{z\rightarrow z_j}\frac{g_1dg_0}{F}|
			=|p_je^{u(z_j)}\frac{d_{2,j}}{d_{1,j}}|.
	\end{split} \end{equation}
	Combining equality \eqref{eq:211102e}, \eqref{eq:211102f} and $\frac{p_j}{ord_{z_j}g}\lim_{z\rightarrow z_j}\frac{dg}{f}=c_0$, we obtain that
	\begin{equation*}\begin{split}
			G(0)&=\left(\int_0^{+\infty}c(s)e^{-s}ds\right)\sum_{j\in \mathbb{Z}_{\ge1}}\frac{|d_{1,j}|^2}{p_j|d_{2,j}|^2}2\pi e^{-2u(z_j)}\\	&=\left(\int_0^{+\infty}c(s)e^{-s}ds\right)\sum_{1\le j<\gamma}\frac{2\pi p_j}{|c_0|^2},
	\end{split}\end{equation*}
	which implies that $\sum_{1\le j<\gamma}p_j<+\infty$.
\end{proof}

When  $\gamma=m+1$, by a similar discuss of the above Step 3 and following the notations  $\chi_{z_j}$, $\chi_{-u}$, $f_{z_j}$ and $f_u$ in Section \ref{sec:1.2}, we know that 
\begin{Remark}\label{r:equi}For any $1\le j\le m,$ assume $ord_{z_j}f={\frac{1}{2}\nu(dd^c(\varphi+\psi),z_j)}-1$, and let $l_j\in\{0,1,\ldots,ord_{z_j}f\}$.
The following two statements are equivalent:

$(1)$ There exist $g\in\mathcal{O}(\Omega)$ and a constant $c_0\in\mathbb{C}\backslash\{0\}$ such that $\varphi+\psi=2\log|g|$ and
 $\frac{p_j}{ord_{z_j}g}\lim_{z\rightarrow z_j}\frac{dg}{f}=c_0$ for any $j$;

$(2)$ There exist $g\in\mathcal{O}(\Omega)$ and a constant $c_0\in\mathbb{C}\backslash\{0\}$ such that $\varphi+\psi=2\log|g|+2\sum_{1\le j\le m}(l_j+1)G_{\Omega}(\cdot,z_j)+2u$, $\prod_{1\le j\le m}\chi_{z_j}^{l_j+1}=\chi_{-u}$, and for any $k\in\{1,2,\ldots,m\}$,
$\lim_{z\rightarrow z_k}\frac{f}{gP_*\Big(f_u\big(\prod\limits_{1\le j\le m}f_{z_j}^{l_j+1}\big)\big(\sum_{1\le j\le m}p_{j}\frac{d{f_{z_{j}}}}{f_{z_{j}}}\big)\Big)}=c_0.$
\end{Remark}

\subsection{Application: $L^2$ extension from infinite points to $\Omega$}\label{sec:5.3}
\

In this section, we prove that the equality in the optimal $L^2$ extension theorem from infinite points to $\Omega$ (see Proposition \ref{p:extension}) does not hold, i.e. the ``$\le$" in the estimate can be refined to ``$<$".

Let $\Omega$, $\psi$, $\varphi$, $c(t)$, $Z_0'$, $z_j$, $w_j$, $V_{z_j}$ and $V_0$  be as in Proposition \ref{p:extension}. Assume that $\gamma=+\infty$.
Let $c_{\beta}(z)$ be the logarithmic capacity (see \cite{S-O69}) on $\Omega$, which is locally defined by
$$c_{\beta}(z_j):=\exp\lim_{z\rightarrow z_j}(G_{\Omega_j}(z,z_j)-\log|w_j(z)|).$$

Using Proposition \ref{p:extension} and \ref{p:infinite}, we obtain

\begin{Theorem}\label{p:infinite-extension}
	Assume $\frac{1}{2}v(dd^{c}\psi,z_j)=\frac{1}{2}\nu(dd^c(\varphi+\psi),z_j)=k_j+1$ and $\alpha_j:=(\varphi+\psi-2(k_j+1)G_{\Omega}(\cdot,z_j))(z_j)>-\infty$ for any $1\le j<+\infty$, where $k_j\in\mathbb{Z}_{\ge0}$. 
	Let $f\in H^0(V_0,\mathcal{O}(K_{\Omega}))$ satisfy $f=a_jw_j^{k_j}$ on $V_{z_j}$ for any $j$, where
	$a_j$ is a sequence of constants such that 
	$$\sum_{j\in\mathbb{Z}_{\ge1}}\frac{2\pi|a_j|^2e^{-\alpha_j}}{(k_j+1)c_{\beta}(z_j)^{2(k_j+1)}}\in(0,+\infty).$$ Then there exists an $F\in H^0(\Omega,\mathcal{O}(K_{\Omega}))$ such that $F=f+o(w_j^{k_j})dw_j$ near ${z_j}$ for any $j$ and
	\begin{equation}
		\nonumber
		\int_{\Omega}|F|^2e^{-\varphi}c(-\psi)<\left(\int_0^{+\infty}c(s)e^{-s}ds\right)\sum_{j\in\mathbb{Z}_{\ge1}}\frac{2\pi|a_j|^2e^{-\alpha_j}}{(k_j+1)c_{\beta}(z_j)^{2(k_j+1)}}.
	\end{equation}	
\end{Theorem}

\begin{proof}As $c(t)e^{-t}$ is decreasing on $(0,+\infty)$,  following from Lemma \ref{l:green-sup2}  we have $\psi\leq\tilde\psi:=2\sum_{1\le j<\gamma }p_jG_{\Omega}(\cdot,z_j)$ and
	$e^{-\varphi}c(-\psi)\leq e^{-(\varphi+\psi-\tilde\psi)}c(-\tilde\psi).$ Thus, we can assume that $\psi=\tilde\psi=2\sum_{1\le j<\gamma }p_jG_{\Omega}(\cdot,z_j)$.
	
	There exist $g_0\in\mathcal{O}(\Omega)$ and $u_0\in SH(\Omega)$ satisfying $\nu(dd^cu_0,z)\in[0,1)$ for any $z\in\Omega$, such that
$
		\varphi+\psi=2\log|g_0|+2u_0.
$
 Note that $ord_{z_j}g_0=k_j+1$  and
	$e^{2u_0(z_j)}\lim_{z\rightarrow z_j}\left|\frac{g_0}{w_j^{k_j+1}}(z)\right|^2=e^{\alpha_j}c_{\beta}(z_j)^{2(k_j+1)}.$
	 Proposition \ref{p:extension} shows  that there exists an $F_0\in H^0(\Omega,\mathcal{O}(K_{\Omega}))$ such that $F_0=f+o(w_j^{k_j})dw_j$ near ${z_j}$ for any $j$ and
	\begin{equation*}
		\int_{\Omega}|F_0|^2e^{-\varphi}c(-\psi)\le\left(\int_0^{+\infty}c(s)e^{-s}ds\right)\sum_{j\in\mathbb{Z}_{\ge1}}\frac{2\pi|a_j|^2e^{-\alpha_j}}{(k_j+1)c_{\beta}(z_j)^{2(k_j+1)}}.
	\end{equation*}
	
	Denote the minimal $L^2$ integral of holomorphic extensions by $C_{\Omega,f}.$
	Now, we assume $\left(\int_0^{+\infty}c(s)e^{-s}ds\right)\sum_{j\in\mathbb{Z}_{\ge1}}\frac{2\pi|a_j|^2e^{-\alpha_j}}{(k_j+1)c_{\beta}(z_j)^{2(k_j+1)}}=C_{\Omega,f}$ to get a contradiction.
	
	Similarly, for any $t>0$, Proposition \ref{p:extension} ($\psi\sim\psi+t$, $\varphi\sim\varphi-t$ $c(\cdot)\sim c(\cdot+t)e^{-t}$ and $\Omega\sim\{\psi<-t\}$)  shows that there exists an  $F_t\in H^0(\{\psi<-t\},\mathcal{O}(K_{\Omega}))$  such that $F_t=f+o(w_j^{k_j})dw_j$ near ${z_j}$ for any $j$ and
	$$\int_{\{\psi<-t\}}|F_t|^2e^{-\varphi}c(-\psi)\leq\left(\int_t^{+\infty}c(s)e^{-s}ds\right)\sum_{j\in\mathbb{Z}_{\ge1}}\frac{2\pi|a_j|^2e^{-\alpha_j}}{(k_j+1)c_{\beta}(z_j)^{2(k_j+1)}}.$$
	As $e^{-\varphi}c(-\psi)=e^{-\varphi-\psi}e^{\psi}c(-\psi)$ and $c(t)e^{-t}$ is decreasing on $(0,+\infty)$, $e^{-\varphi}c(-\psi)$ has locally positive lower bound on $\Omega\backslash Z_0'$. Taking $\mathcal{F}_{z_j}=\mathcal{I}(2(k_j+1)G_{\Omega}(\cdot,z_j))_{z_j}$, by the definition of $G(t)$,  we obtain that inequality
	\begin{equation}
		\label{eq:211108b}
		\frac{G(t)}{\int_t^{+\infty}c(s)e^{-s}ds}\leq	\sum_{j\in\mathbb{Z}_{\ge1}}\frac{2\pi|a_j|^2e^{-\alpha_j}}{(k_j+1)c_{\beta}(z_j)^{2(k_j+1)}}=\frac{G(0)}{\int_0^{+\infty}c(s)e^{-s}ds}\end{equation}
	holds for any $t\geq0$.
	Following from equality \eqref{eq:211108b} and Theorem \ref{maintheorem}, we have that $G( {h}^{-1}(r))$ is linear with respect to $r$. Note that $\sum_{j\in\mathbb{Z}_{\ge1}}k_j+1=+\infty$, which contradicts to Proposition \ref{p:infinite}. Thus, Theorem \ref{p:infinite-extension} holds.
\end{proof}

\section{Proofs of Theorem \ref{thm:m-points} and Remark \ref{rem:1.1}}
In this section, we prove Theorem \ref{thm:m-points} and Remark \ref{rem:1.1}.

The  necessity in Theorem \ref{thm:m-points} holds by Proposition \ref{p:infinite} and Remark \ref{r:equi}, then it suffices to prove the sufficiency.
By Remark \ref{r:c'}, assume case $p_j>2$ for any $j$ without loss of generality.

 Let $F=c_0gP_*\left(f_u(\prod_{1\le j\le m}f_{z_j})\left(\sum_{1\le j\le m}p_{j}\frac{d{f_{z_{j}}}}{f_{z_{j}}}\right)\right)$  on $\Omega$, which is a (single-value) holomorphic $(1,0)$ form on $\Omega$ by $\prod_{1\le j\le m}\chi_{z_j}=\chi_{-u}$, where $g$ is the holomorphic function in statement $(2)$ and $c_0$ is the constant in statement $(4)$.
As $\varphi+\psi=2\log|g|+2\sum_{1\le j\le m}G_{\Omega}(\cdot,z_j)+2u$, $\mathcal{F}_{z_j}=\mathcal{I}(\varphi+\psi)_{z_j}$, $ord_{z_j}(g)=ord_{z_j}(f)$ and $\lim_{z\rightarrow z_j}\frac{f}{F}=1$   for any $j$,  we have $(F-f,z_j)\in\mathcal{O}((K_{\Omega}))_{z_j}\otimes\mathcal{F}_{z_j}$ for any $j$.
Note that $|P_*(f_u)|=e^u$ and $|P_*f_{z_j}|=e^{G_{\Omega}(\cdot,z_j)}$. Then we have
\begin{equation}
\nonumber
	\begin{split}
		|F|^2e^{-\varphi}&=|c_0|^2\Bigg|P_*\bigg(\big(\prod_{1\le j\le m}f_{z_j}\big)\bigg(\sum_{1\le j\le m}p_{j}\frac{d{f_{z_{j}}}}{f_{z_{j}}}\bigg)\bigg)\Bigg|^2e^{2\sum_{1\le j\le m}(p_j-1)G_{\Omega}(\cdot,z_j)}\\
		&=|c_0|^2P_*\bigg(\prod_{1\le j\le m}|f_{z_j}|^{2p_j}\bigg)\Bigg|P_*\bigg(\sum_{1\le j\le m}p_j\frac{df_{z_j}}{f_{z_j}}\bigg)\Bigg|^2.
	\end{split}
\end{equation}
Note that $p_{j}>2$.  
Combining with equality  \eqref{eq:211026h}, we obtain that
$
	|F|^2e^{-\varphi}=\sqrt{-1}|c_0|^2\partial\overline{\partial}e^{2\sum_{1\le j\le m}p_jG_{\Omega}(\cdot,z_j)}.
$
Using Lemma \ref{l:4}, we get
\begin{equation}
	\label{eq:211021a}\int_{\Omega}|F|^2e^{-\varphi}=\sqrt{-1}|c_0|^2\int_{\Omega}\partial\overline{\partial}e^{2\sum_{1\le j\le m}p_jG_{\Omega}(\cdot,z_j)}=2\pi|c_0|^2\sum_{1\le j\le m}p_j.
\end{equation}

For any $\tilde{F}\in H^0(\Omega,\mathcal{O}(K_{\Omega}))$ satisfying $(\tilde{F}-f,z_j)\in\mathcal{O}((K_{\Omega}))_{z_j}\otimes\mathcal{F}_{z_j}$ for any $j$ and $\int_{\Omega}|\tilde{F}|^2e^{-\varphi}<+\infty$, there exists a  $\beta\in H^0(\Omega,\mathcal{O}(K_{\Omega}))$  such that  $\frac{\tilde{F}-F}{g}=P_*(f_u\prod_{1\le j\le m}f_{z_j})\beta$ and
\begin{displaymath}\begin{split}
	\int_{\Omega}|P_*(\prod_{1\le j\le m}f_{z_j})\beta|^2e^{-2\sum_{1\le j\le m}(p_j-1)G_{\Omega}(\cdot,z_j)}=\int_{\Omega}|\tilde{F}-F|^2e^{-\varphi}<+\infty,
	\end{split}
\end{displaymath}
which implies that $\int_{\Omega}|\beta|^2<+\infty$.
Note that
\begin{equation}
\nonumber
	\begin{split}
		&F\wedge\overline{(F-\tilde{F})}e^{-\varphi}\\
		=&c_0gP_*\Bigg(f_u\bigg(\prod_{1\le j\le m}f_{z_j}\bigg)\bigg(\sum_{1\le j\le m}p_{j}\frac{d{f_{z_{j}}}}{f_{z_{j}}}\bigg)\Bigg)\wedge\overline{gP_*\bigg(f_u\prod_{1\le j\le m}f_{z_j}\bigg)\beta}e^{-\varphi}\\
		=&c_0e^{2\sum_{1\le j\le m}p_jG_{\Omega}(\cdot,z_j)}P_*\bigg(\sum_{1\le j\le m}p_j\frac{df_{z_j}}{f_{z_j}}\bigg)\wedge\overline\beta.
	\end{split}
\end{equation}
and
\begin{equation}
\nonumber
	\begin{split}
		\partial e^{2\sum_{1\le j\le m}p_jG_{\Omega}(\cdot,z_j)}&=\sum_{1\le k\le m}e^{2\sum_{1\le j\le m}p_jG_{\Omega}(\cdot,z_j)-2G_{\Omega}(\cdot,z_k)}p_kP_*(\overline{f_{z_k}}\partial f_{z_k})\\
		&=e^{2\sum_{1\le j\le m}p_jG_{\Omega}(\cdot,z_j)}P_*\bigg(\sum_{1\le j\le m}p_j\frac{df_{z_k}}{f_{z_k}}\bigg).
	\end{split}
\end{equation}
Then we have
$
	F\wedge\overline{(F-\tilde{F})}e^{-\varphi}=c_0\partial e^{2\sum_{1\le j\le m}p_jG_{\Omega}(\cdot,z_j)}\wedge\overline\beta.
$
Following from Lemma \ref{l:5}, we have 
$\int_{\Omega}F\wedge\overline{(F-\tilde{F})}e^{-\varphi}=\int_{\Omega}c_0\partial e^{2\sum_{1\le j\le m}p_jG_{\Omega}(\cdot,z_j)}\wedge\overline\beta=0,$
which implies that
$\int_{\Omega}|\tilde{F}|^2e^{-\varphi}=\int_{\Omega}|F|^2e^{-\varphi}+\int_{\Omega}|\tilde{F}-F|^2e^{-\varphi}.$
Thus, we have
\begin{equation}
	\label{eq:211021b}
	G(0;\tilde{c}\equiv1)=\int_{\Omega}|F|^2e^{-\varphi}.
\end{equation}
It follows from  equality \eqref{eq:211021a} and  \eqref{eq:211021b} that
\begin{equation}
	\label{eq:211021c}
	G(0;\tilde{c}\equiv1)=2\pi|c_0|^2\sum_{1\le j\le m}p_j.
\end{equation}

Let $\tilde{w}_j$ be a local coordinate on a neighborhood $\tilde{V}_{z_j}$ satisfying $|\tilde{w}_j|=e^{\sum_{1\le j\le m}G_{\Omega}(\cdot,z_j)}=|P_*(\prod_{1\le j\le m}f_{z_j})|$ on $\tilde{V}_{z_j}$ for any $j\in\{1,2,\ldots,m\}$. Note that $\varphi+\psi=2\log|g\tilde{w}_j|+2u$ on $\tilde{V}_{z_j}$.
Assume that $f=d_{1,j}\tilde{w}_j^{k_{1,j}}h_{1,j}d\tilde{w}_j$ and $g\tilde{w}_j=d_{2,j}\tilde{w}_j^{k_{2,j}}h_{2,j}$, where $d_{i,j}\not=0$ is constant, $k_{i,j}$  is nonnegative integer, and $h_{i,j}\in\mathcal{O}(\tilde{V}_{z_j})$  satisfying $h_{i,j}(z_j)=1$  for any $i\in\{1,2\}$ and $j\in\{1,2,\ldots,m\}$. Note that $k_{1,j}+1=k_{2,j}$ and
\begin{equation}
\nonumber |c_0|^2=\lim_{z\rightarrow z_j}\frac{f}{gP_*\left(f_u(\prod_{1\le j\le m}f_{z_j})\left(\sum_{1\le j\le m}p_{j}\frac{d{f_{z_{j}}}}{f_{z_{j}}}\right)\right)}=\frac{|d_{1,j}|^2e^{-2u(z_j)}}{|p_jd_{2,j}|^2}
\end{equation}
 for any $j\in\{1,2,\ldots,m\}$.
Proposition \ref{p:extension} shows that
\begin{equation}
	\label{eq:211021d}G(t;\tilde{c}\equiv1)\le e^{-t}\sum_{1\le j\le m}\frac{2\pi|d_{1,j}|^2e^{-2u(z_j)}}{p_j|d_{2,j}|^2}=2\pi|c_0|^2e^{-t}\sum_{1\le j\le m}p_j.
\end{equation}
As  $G(-\log r;\tilde{c}\equiv1)$ is concave with respect to $r$ by Theorem \ref{maintheorem}, using equality \eqref{eq:211021c} and inequality \eqref{eq:211021d}, we know $G(-\log r,\tilde{c}\equiv1)$ is linear with respect to $r$.
It follows from Proposition \ref{c:linear} and Remark \ref{r:c} that $G(h^{-1}(r);c)$ is linear with respect to $r$.

Thus,  Theorem \ref{thm:m-points} holds.

Now, we prove Remark \ref{rem:1.1}. 
As $G(-\log r;\tilde{c})$ is linear with respect to $r$, it follows from equality \eqref{eq:211021b} Proposition \ref{c:linear} and Lemma \ref{l:unique} shows that
$$
G(t;\tilde{c}\equiv1)=\int_{\{\psi<-t\}}|F|^2e^{-\varphi}
$$ for any $t\ge0$, where $F=c_0gP_*(f_u(\prod_{1\le j\le m}f_{z_j})(\sum_{1\le j\le m}p_{j}\frac{d{f_{z_{j}}}}{f_{z_{j}}}))\in H^0(\Omega,\mathcal{O}(K_{\Omega}))$.
By Proposition \ref{c:linear} and Remark \ref{r:c},
$$G(t;c)=\int_{\{\psi<-t\}}|F|^2e^{-\varphi}c(-\psi)$$
for any $t\ge0.$ The uniqueness follows from Proposition \ref{c:linear}.
Thus, Remark \ref{rem:1.1} holds.

 \section{Proofs of Theorem \ref{c:L2-1d-char} and Remark \ref{rem:1.2}}

 In this section, we prove Theorem \ref{c:L2-1d-char} and Remark \ref{rem:1.2}.

Using the Weierstrass Theorem on open Riemann surface (see \cite{OF81}) and  Siu's Decomposition Theorem \cite{siu74}, we have
\begin{equation}
\nonumber
	\varphi+\psi=2\log|g_0|+2u_0,
\end{equation}
where $g_0\in\mathcal{O}(\Omega)$ and $u_0\in SH(\Omega)$ such that $\nu(dd^cu_0,z)\in[0,1)$ for any $z\in\Omega$. Note that $ord_{z_j}g_0=k_j+1$  and
$e^{2u_0(z_j)}\lim_{z\rightarrow z_j}|\frac{g_0}{w_j^{k_j+1}}(z)|^2=e^{\alpha_j}c_{\beta}(z_j)^{2(k_j+1)}.$
By Proposition \ref{p:extension},  there exists a minimal extension form $F\in H^0(\Omega,\mathcal{O}(K_{\Omega}))$ such that $F=f+o(w_j^{k_j})dw_j$ near $z_j$ for any $j\in\{1,2,\ldots,m\}$ and
\begin{equation*}
	\int_{\Omega}|F|^2e^{-\varphi}c(-\psi)\le\left(\int_0^{+\infty}c(s)e^{-s}ds\right)\sum_{1\le j\le m}\frac{2\pi|a_j|^2e^{-\alpha_j}}{p_jc_{\beta}(z_j)^{2(k_j+1)}}.
\end{equation*}

In the following, we prove the characterization of the holding of the equality $C_{\Omega,f}=\left(\int_0^{+\infty}c(s)e^{-s}ds\right)\sum_{1\le j\le m}\frac{2\pi|a_j|^2e^{-\alpha_j}}{p_jc_{\beta}(z_j)^{2(k_j+1)}}$.

Similarly, for any $t>0$, Proposition \ref{p:extension} ($\psi\sim\psi+t$, $\varphi\sim\varphi-t$ $c(\cdot)\sim c(\cdot+t)e^{-t}$ and $\Omega\sim\{\psi<-t\}$)  shows that there exists an  $F_t\in H^0(\{\psi<-t\},\mathcal{O}(K_{\Omega}))$  such that $F_t=f+o(w_j^{k_j})dw_j$ near ${z_j}$ for any $j$ and
$$\int_{\{\psi<-t\}}|F_t|^2e^{-\varphi}c(-\psi)\leq\left(\int_t^{+\infty}c(s)e^{-s}ds\right)\sum_{1\le j\le m}\frac{2\pi|a_j|^2e^{-\alpha_j}}{p_jc_{\beta}(z_j)^{2(k_j+1)}}.$$

Firstly, we prove the necessity. Assume that equality \eqref{eq:241208b} holds.
 Take $\mathcal{F}_{z_j}=\mathcal{I}(2(k_j+1)G_{\Omega}(\cdot,z_j))_{z_j}$ for any $j$, and denote
$$\inf\bigg\{\int_{\{\psi<-t\}}|\tilde f|^2e^{-\varphi}c(-\psi):\tilde f\in H^0(\{\psi<-t\},\mathcal{O}(K_{\Omega}))\,\&\,(\tilde f-f,z_j)\in\mathcal{F}_{z_j}\,\forall j\bigg\}$$
by $G(t)$, where $t\ge 0$.
Then we have
\begin{equation}
	\label{eq:241207a}
	\frac{G(t)}{\int_t^{+\infty}c(s)e^{-s}ds}\leq	\sum_{1\le j\le m}\frac{2\pi|a_j|^2e^{-\alpha_j}}{p_jc_{\beta}(z_j)^{2(k_j+1)}}=\frac{G(0)}{\int_0^{+\infty}c(s)e^{-s}ds}.\end{equation}
Denote $\tilde\psi:=2\sum_{1\le j\le m}p_jG_{\Omega}(\cdot,z_j)$. 
By the above discussion, there exists a $\tilde{F}_1\in H^0(\Omega,\mathcal{O}(K_{\Omega}))$  such that $\tilde{F}_1=f+o(w_j^{k_j})dw_j$ near ${z_j}$ for any $j$ and
\begin{equation}\label{eq:241208c}
\int_{\Omega}|\tilde{F}_1|^2e^{-(\varphi+\psi-\tilde\psi)}c(-\tilde\psi)
		\leq\left(\int_0^{+\infty}c(s)e^{-s}ds\right)\sum_{1\le j\le m}\frac{2\pi|a_j|^2e^{-\alpha_j}}{p_jc_{\beta}(z_j)^{2(k_j+1)}}.
\end{equation}
As $c(t)e^{-t}$ is decreasing and $\psi\leq\tilde\psi$,  $e^{-\varphi}c(-\psi)\le e^{-\varphi-\psi+\tilde\psi}c(-\tilde\psi)$. Thus,
it follows from equality \eqref{eq:241208b} and \eqref{eq:241208c} that
$\int_{\Omega}|\tilde{F}_1|^2e^{-\varphi}c(-\psi)=\int_{\Omega}|\tilde{F}_1|^2e^{-(\varphi+\psi-\tilde\psi)}c(-\tilde\psi).$
Using Lemma \ref{l:psi=G}, we have  $\psi=\tilde\psi=2\sum_{1\le j\le m}p_jG_{\Omega}(\cdot,z_j)$.
 Using the concavity of $G({h}^{-1}(r))$ (see Theorem \ref{maintheorem}) and inequality \eqref{eq:241207a}, we know that $G({h}^{-1}(r))$ is linear on $(0,\int_0^{+\infty}c(t)e^{-t}dt)$, where $h(t)=\int_t^{+\infty}c(s)e^{-s}ds$. Hence by Theorem \ref{thm:m-points} and remark \ref{r:equi}, statements $(2)$, $(3)$ and $(4)$ in Theorem \ref{c:L2-1d-char} hold.

Now, we prove the sufficiency.
Assume that the four statements in Theorem \ref{c:L2-1d-char} hold. 
It follows from Theorem \ref{thm:m-points} and Remark \ref{r:equi}  that $G( {h}^{-1}(r))$ is linear with respect to $r$.  Proposition \ref{c:linear} shows that the minimal extension forms on all sublevel sets are the same, i.e., 
$$\int_{\{\psi<-t\}}|{F}|^2e^{-\varphi}c(\psi)=G(t)$$
for any $t$.
Let $\tilde{w}_j$ be a local coordinate on a neighborhood $\tilde{V}_{z_j}\subset V_{z_j}$ of $z_j$ satisfying $\log|\tilde{w}_j|=\frac{1}{p_j}\sum_{1\le k\le m}p_kG_{\Omega}(\cdot,z_k)$. As $f=a_jw_j^{k_j}dw_j$ on $V_{z_j}$,  we have
\begin{equation}
	\label{eq:211028a}
	\lim_{z\rightarrow z_j}\left|\frac{{F}(z)}{\tilde{w}_j(z)^{k_j}d\tilde{w}_j}\right|^2=|a_j|^2\left(\lim_{z\rightarrow z_j}\left|\frac{w_j(z)}{\tilde{w}_j(z)}\right|\right)^{2(k_j+1)}
\end{equation}
and
\begin{equation}
	\label{eq:211028b}\begin{split}
	\lim_{z\rightarrow z_j}e^{-\varphi(z)-\psi(z)}|\tilde{w}_j(z)|^{2(k_j+1)}&=\lim_{z\rightarrow z_j}e^{-\varphi(z)-\psi(z)+2(k_j+1)G_{\Omega}(z,z_j)}\frac{|\tilde{w}_j(z)|^{2(k_j+1)}	}{e^{2(k_j+1)G_{\Omega}(z,z_j)}}	\\
	&=e^{-\alpha_j}\big(\exp\lim_{z\rightarrow z_j}(\log|\tilde{w}_j|-G_{\Omega}(z,z_j))\big)^{2(k_j+1)}.
	\end{split}\end{equation}
Combining equality \eqref{eq:211028a}, equality \eqref{eq:211028b} and $c_{\beta}(z_j)=\exp\lim_{z\rightarrow z_j}(G_{\Omega}(z,z_j)-\log|w(z)|)$, we have
\begin{equation}
	\label{eq:211028c}
	\lim_{z\rightarrow z_j}\left|\frac{{F}(z)}{d\tilde{w}_j}\right|^2e^{-\varphi(z)-\psi(z)}|\tilde{w}_j(z)|^2=\frac{|a_j|^2e^{-\alpha_j}}{c_{\beta}(z_j)^{2(k_j+1)}}.
\end{equation}
Using Lemma \ref{l:G-compact}, there exists a $t_0>0$ such that $\{2\sum_{1\le j\le m}p_jG_{\Omega}(\cdot,z_j)<-t_0\}\Subset \cup_{1\le j\le m}\tilde{V}_{z_j}$. It follows from equality \eqref{eq:211028c} that
\begin{displaymath}
	\begin{split}
		&\lim_{t\rightarrow+\infty}\frac{\int_{\{\psi<-t\}}|{F}|^2e^{-\varphi}c(-\psi)}{\int_{t}^{+\infty}c(s)e^{-s}ds}\\
		=&\lim_{t\rightarrow+\infty}\sum_{1\le j\le m}\frac{\int_{\{2p_j\log|\tilde{w}_j|<-t\}}|\frac{{F}}{d\tilde{w}_j}|^2e^{-\varphi-\psi}|\tilde{w}_j|^{2p_j}c(-2p_j\log|\tilde{w}_j|)\sqrt{-1}d\tilde{w}_j\wedge d\overline{\tilde{w}_j}}{\int_{t}^{+\infty}c(s)e^{-s}ds}\\
		=&\sum_{1\le j\le m}\frac{|a_j|^2e^{-\alpha_j}}{c_{\beta}(z_j)^{2(k_j+1)}}\lim_{t\rightarrow+\infty}\frac{\int_{\{2p_j\log|\tilde{w}_j|<-t\}}|\tilde{w}_j|^{2(p_j-1)}c(-2p_j\log|\tilde{w}_j|)\sqrt{-1}d\tilde{w}_j\wedge d\overline{\tilde{w}_j}}{\int_{t}^{+\infty}c(s)e^{-s}ds}\\
		=&\sum_{1\le j\le m}\frac{4\pi|a_j|^2e^{-\alpha_j}}{c_{\beta}(z_j)^{2(k_j+1)}}\lim_{t\rightarrow+\infty}\frac{\int_{0}^{e^{-\frac{t}{2p_j}}}r^{2(p_j-1)+1}c(-2p_j\log r)dr}{\int_{t}^{+\infty}c(s)e^{-s}ds}  \\
		=&\sum_{1\le j\le m}\frac{2\pi|a_j|^2e^{-\alpha_j}}{p_jc_{\beta}(z_j)^{2(k_j+1)}},
	\end{split}
\end{displaymath}
Thus the equality $\left(\int_0^{+\infty}c(s)e^{-s}ds\right)\sum_{1\le j\le m}\frac{2\pi|a_j|^2e^{-\alpha_j}}{p_jc_{\beta}(z_j)^{2(k_j+1)}}=G(0)=C_{\Omega,f}$ holds.

Thus, Theorem \ref{c:L2-1d-char} holds.

Finally, we prove Remark \ref{rem:1.2}.
 As $G(h^{-1}(r))$ is linear, it follows from Remark \ref{rem:1.1} that the minimal extension form
$$F=cg_1P_*\Bigg(f_{u_1}\bigg(\prod_{1\le j\le m}f_{z_j}\bigg)\bigg(\sum_{1\le j\le m}p_{j}\frac{d{f_{z_{j}}}}{f_{z_{j}}}\bigg)\Bigg),$$
which is  the unique holomorphic $(1,0)$ form on $\Omega$ such that $F=f+o(w_j^{k_j})dw_j$ near $z_j$ for any $j$ and
$$\int_{\Omega}|F|^2e^{-\varphi}c(-\psi)\leq\left(\int_0^{+\infty}c(s)e^{-s}ds\right)\sum_{1\le j\le m}\frac{2\pi|a_j|^2e^{-\alpha_j}}{p_jc_{\beta}(z_j)^{2(k_j+1)}}.$$ Here $g_1\in\mathcal{O}(\Omega)$ and $u_1$ is harmonic such that $ord_{z_j}g_1=k_j$ and $\varphi+\psi=2\log|g_1|+2\sum_{1\le j\le m}G_{\Omega}(\cdot,z_j)+2u_1$.
As $\varphi+\psi=2\log|g|+2\sum_{1\le j\le m}(k_j+1)G_{\Omega}(\cdot,z_j)+2u$, we have
 $f_u=f_{u_1}\frac{P^*(g_2)}{\prod_{1\le j\le m}f_{z_j}^{k_j}}$ on $\Delta$, where $g_2=\frac{g_1}{g}\in\mathcal{O}(\Omega)$. Then we obtain that
$$F=c_0gP_*\Bigg(f_{u}\bigg(\prod_{1\le j\le m}f_{z_j}^{k_j+1}\bigg)\bigg(\sum_{1\le j\le m}p_{j}\frac{d{f_{z_{j}}}}{f_{z_{j}}}\bigg)\Bigg).$$
 
\section{Appendix: An example of Theorem \ref{c:L2-1d-char}}
\label{appendix}

Let $\Delta$ be the unit disc in $\mathbb{C}$, and let $Z_0=\{z_1=0,z_2=\frac{1}{2}\}\subset\Delta$. Let $k_1=1$ and $k_2=0$. Note that $G_{\Delta}(z,0)=\log|z|$ and $G_{\Delta}(z,\frac{1}{2})=\log|\frac{2z-1}{2-z}|$. Let $\psi=4\log|z|+2\log|\frac{2z-1}{2-z}|$, and let $\varphi=0$. Then we have $c_{\beta}(z_1)=1$, $c_{\beta}(z_2)=\frac{4}{3}$, $\alpha_1=-\log 4$ and $\alpha_2=-4\log 2$. Let $f$ be a holomorphic $(1,0)$ form on $\{|z|<\frac{1}{10}\}\cup\{|\frac{2z-1}{2-z}|<\frac{1}{10}\}$ such that $f=zdz$ on $\{|z|<\frac{1}{10}\}$ and $f=adz$ on $\{|\frac{2z-1}{2-z}|<\frac{1}{10}\}$, where $a\in\mathbb{C}\backslash\{0\}$, and let $c\equiv1$ be a function on $(0,+\infty)$. Then we have the right hand side in inequality \eqref{eq:210902a} is $4\pi+18|a|^2\pi$.

 For any  $F\in H^0(\Delta,\mathcal{O}(K_{\Omega}))$, there exists $\{a_l\}_{l\in\mathbb{Z}_{\ge0}}$ such that $F=\sum_{l\in\mathbb{Z}_{\ge0}}a_lz^ldz$. $F$ satisfies $(F-f,z_j)\in(\mathcal{O}(K_{\Delta})\otimes\mathcal{I}(\varphi+\psi))_{z_j}$ for any $j\in\{1,2\}$  if and only if $a_0=0$, $a_1=1$ and $\sum_{l\in\mathbb{Z}_{\ge0}}a_l(\frac{1}{2})^{l}=a$. By a direct calculation, we have
 \begin{equation}
 	\label{eq:211028d}\begin{split}
 		\int_{\Delta}|F|^2&=\lim_{r\rightarrow1-0}\int_{\{|z|<r\}}\sqrt{-1}F\wedge\overline{F}=\lim_{r\rightarrow1-0}4\pi\sum_{l\in\mathbb{Z}_{\ge0}}\frac{|a_l|^2r^{2l+2}}{2l+2}\\
 		&=2\pi\sum_{l\in\mathbb{Z}_{\ge0}}\frac{|a_l|^2}{j+1}=2\pi\sum_{l\in\mathbb{Z}_{>0}}\frac{|a_l|^2}{j+1}.
 	\end{split}
 \end{equation}
Assume that $(F-f,z_j)\in(\mathcal{O}(K_{\Delta})\otimes\mathcal{I}(2(k_j+1)G_{\Delta}(\cdot,z_j)))_{z_j}$ for any $j\in\{1,2\}$. It follows from Cauchy-Schwarz inequality and equality \eqref{eq:211028d} that
\begin{equation*}
	\begin{split}
		|a-\frac{1}{2}|^2&=|\sum_{l\in\mathbb{Z}_{>1}}a_l(\frac{1}{2})^{l}|^2=|\sum_{l\in\mathbb{Z}_{>1}}\frac{a_l}{\sqrt{l+1}}\sqrt{l+1}(\frac{1}{2})^{l}|^2\\
		&\le(\sum_{l\in\mathbb{Z}_{>1}}\frac{|a_l|^2}{l+1})(\sum_{l\in\mathbb{Z}_{>1}}(l+1)\frac{1}{4^l})=\frac{5}{36\pi}(\int_{\Delta}|F|^2-\pi),
	\end{split}
\end{equation*}which implies that
\begin{equation}
	\label{eq:211028e}\begin{split}
		\int_{\Delta}|F|^2&\ge\frac{36\pi}{5}|a-\frac{1}{2}|^2+\pi.
	\end{split}
\end{equation}
Note that equality \eqref{eq:211028e} becomes equality if and only if $\frac{2^la_l}{l+1}=\frac{2^{l'}a_{l'}}{l'+1}$ for any $l,l'\in\mathbb{Z}_{>1}$. Since there exists a sequence of complex numbers $\{a_l\}_{l\in\mathbb{Z}_{\ge0}}$ satisfying $a_0=0$, $a_1=1$, $\frac{2^la_l}{l+1}=\frac{2^{l'}a_{l'}}{l'+1}$ for any $l,l'\in\mathbb{Z}_{>1}$ and $\sum_{l\in\mathbb{Z}_{\ge0}}a_l(\frac{1}{2})^l=a$, then we obtain that there exists an $F\in H^0(\Delta,\mathcal{O}(K_{\Delta}))$ such that  $\int_{\Delta}|F|^2=\inf\{\int_{\Delta}|\tilde{F}|^2:\tilde{F}\in H^0(\Delta,\mathcal{O}(K_{\Delta}))$ such that $(\tilde{F}-f,z_j)\in(\mathcal{O}(K_{\Delta})\otimes\mathcal{I}(2(k_j+1)G_{\Delta}(\cdot,z_j)))_{z_j}$ for any $j\}=\frac{36\pi}{5}|a-\frac{1}{2}|^2+\pi$.

Following from the right hand side in inequality \eqref{eq:210902a} is $4\pi+18|a|^2\pi$ and
\begin{displaymath}
	\begin{split}
		(4\pi+18|a|^2\pi)-\left(\frac{36\pi}{5}|a-\frac{1}{2}|^2+\pi\right)&=\frac{3\pi}{5}(30|a|^2-12|a-\frac{1}{2}|^2+5)\\
		&=\frac{6\pi}{5}|3a+1|^2\ge0,
	\end{split}
\end{displaymath}
 the inequality \eqref{eq:210902a} holds. Moreover, equality $4\pi+18|a|^2\pi=\inf\{\int_{\Delta}|\tilde{F}|^2:\tilde{F}\in H^0(\Delta,\mathcal{O}(K_{\Delta}))$ such that $(\tilde{F}-f,z_j)\in(\mathcal{O}(K_{\Delta})\otimes\mathcal{I}(2(k_j+1)G_{\Delta}(\cdot,z_j)))_{z_j}$ for any $j\}$ holds if and only if $a=-\frac{1}{3}$.

As $|z|=e^{G_{\Delta}(z,z_1)}$ and $|\frac{2z-1}{2-z}|=e^{G_{\Delta}(z,z_2)}$, then there exists a constant $c_1$ satisfying $|c_1|=1$ and $P_*(f_{z_1}^2f_{z_2})=c_1z^2\frac{2z-1}{2-z}$. Note that
\begin{equation}
	\label{eq:211028f}\lim_{z\rightarrow z_1}\frac{f}{P_*(2f_{z_1}f_{z_2}df_{z_1})}=\lim_{z\rightarrow0}\frac{zdz}{2c_1z\frac{2z-1}{2-z}dz}=-\frac{1}{c_1}
\end{equation}
and \begin{equation}
	\label{eq:211028g}\lim_{z\rightarrow z_2}\frac{f}{P_*(f_{z_1}^2df_{z_1})}=\lim_{z\rightarrow\frac{1}{2}}\frac{adz}{c_1z^2d\frac{2z-1}{2-z}}=\frac{3a}{c_1}.
\end{equation}
It is clear that  statement $(1)$, $(2)$ and $(3)$ in Theorem \ref{c:L2-1d-char} hold. It follows from equality \eqref{eq:211028f} and equality \eqref{eq:211028g} that statement $(4)$ in Theorem \ref{c:L2-1d-char} holds if and only if $a=-\frac{1}{3}$. Thus, equality $4\pi+18|a|^2\pi=\inf\{\int_{\Delta}|\tilde{F}|^2:\tilde{F}\in H^0(\Delta,\mathcal{O}(K_{\Delta}))$ such that $(\tilde{F}-f,z_j)\in(\mathcal{O}(K_{\Delta})\otimes\mathcal{I}(2(k_j+1)G_{\Delta}(\cdot,z_j)))_{z_j}$ for any $j\}$ holds if and only if statement $(4)$ in Theorem \ref{c:L2-1d-char} holds.

\


\vspace{.1in} {\em Acknowledgements}.
The authors would like to thank Dr. Shijie Bao and Dr. Zhitong Mi for checking the manuscript.  The first named author was supported by National Key R\&D Program of China 2021YFA1003100, NSFC-11825101 and NSFC-12425101. The second author was supported by China Postdoctoral Science Foundation BX20230402 and 2023M743719.

\bibliographystyle{references}
\bibliography{xbib}

\end{document}